\documentclass[
	english
]{scrartcl}

\usepackage[utf8]{inputenc}

\usepackage{amsmath,amsfonts,amsthm,amssymb}
\usepackage{mathtools}
\usepackage[colorlinks,citecolor=blue]{hyperref}
\usepackage{cancel}
\usepackage{relsize}
\usepackage{algorithmic}
\usepackage[capitalise,nameinlink]{cleveref}
\crefname{assumption}{Assumption}{Assumptions}
\Crefname{ALC@unique}{Step}{Steps}

\usepackage{changes}

\usepackage{enumitem}
\setlist[enumerate]{label=(\alph*)}
\usepackage{pdfrender}

\usepackage[figure]{hypcap}
\usepackage{graphicx}
\graphicspath{{./img/}}
\usepackage{subcaption} 

\usepackage{preamble}

\usepackage{marginnote}

\allowdisplaybreaks

\numberwithin{equation}{section}

\newcommand\norm[1]{\left\Vert#1\right\Vert}
\newcommand\nnorm[1]{\Vert#1\Vert}

\newcommand\N{\mathbb{N}}
\newcommand\R{\mathbb{R}}

\newcommand\B{\mathbb{B}}

\newcommand\tto{\rightrightarrows}

\newcommand{\spa}{\operatorname{span}}

\newcommand{\sgn}{\operatorname{sgn}}

\newcommand{\dist}{\operatorname{dist}}
\newcommand{\conv}{\operatorname{conv}}
\newcommand{\cone}{\operatorname{cone}}
\newcommand{\supp}{\operatorname{supp}}

\newcommand{\gph}{\operatorname{gph}}
\newcommand{\epi}{\operatorname{epi}}

\DeclareMathOperator*{\argmin}{\operatorname{argmin}}

\renewcommand{\subseteq}{\subset}

\DeclareMathAlphabet{\mathpzc}{OT1}{pzc}{m}{it}
\newcommand\oo{\mathpzc{o}}

\newtheorem{theorem}{Theorem}[section]
\newtheorem{lemma}[theorem]{Lemma}
\newtheorem{proposition}[theorem]{Proposition}
\newtheorem{assumption}[theorem]{Assumption}

\newtheorem{corollary}[theorem]{Corollary}
\newtheorem{remark}[theorem]{Remark}
\newtheorem{definition}[theorem]{Definition}
\newtheorem{example}[theorem]{Example}

\makeatletter
\long\def\@firstoffiveparen#1#2#3#4#5{\textup{\tagform@{#1}}}
\def\eqref@nolink#1{\textup{\tagform@{\ref*{#1}}}}
\def\eqref@link#1{%
\Hy@safe@activestrue
\expandafter\@setref\csname r@#1\endcsname\@firstoffiveparen{#1}%
\Hy@safe@activesfalse
}
\protected\def\eqref{\@ifstar\eqref@nolink\eqref@link}
\makeatother

\definecolor{mygreen}{rgb}{0.0,0.7,0.0}
\definecolor{mybrown}{rgb}{0.5,0.5,0.0}

\begin{document}

\title{%
	Approximate directional stationarity 
	and associated qualification conditions
	}%
\author{%
	Isabella K\"aming%
	\footnote{%
		Technische Universit\"at Dresden,
		Institute of Numerical Mathematics,
		01062 Dresden,
		Germany,
		\email{isabella.kaeming@tu-dresden.de},
		\orcid{0009-0001-8371-6025}
		}
	\and
	Patrick Mehlitz%
	\footnote{%
		Philipps-Universität Marburg,
		Department of Mathematics and Computer Science,
		35032 Marburg,
		Germany,
		\email{mehlitz@uni-marburg.de},
		\orcid{0000-0002-9355-850X}%
		}
	}

\publishers{}
\maketitle

\begin{abstract}
	Approximate stationarity conditions
	provide necessary optimality conditions without requiring additional assumptions
	by demanding that a perturbed stationarity system possesses solutions as
	the involved perturbations tend to zero. Together with
	associated approximate constraint qualifications,
	which are typically rather mild,
	they raised much interest in the optimization community during the last decade.
	In parallel, directional stationarity conditions became quite popular
	as they sharpen standard stationarity conditions by incorporating data
	associated with underlying critical directions.   
	The purpose of this paper is twofold.
	First, we melt the aforementioned concepts of approximate and directional
	stationarity to formulate and study so-called approximate directional
	stationarity. 
	For the underlying model problem,
	an optimization problem with nonsmooth geometric constraints is chosen,
	which covers diverse practically relevant applications.	
	The role of approximate directional stationarity 
	as a necessary optimality condition is investigated
	in much detail, complementing results from the literature.
	Second, we formulate a qualification condition which, based on an
	approximately directionally stationary point, can be exploited to infer
	its directional stationarity.
	The latter condition depends on one particular sequence verifying
	approximate directional stationarity and merely requires to check a simple
	condition of Mangasarian--Fromovitz type stated in terms
	of the directional tools of limiting variational analysis.
	This contrasts standard approximate constraint
	qualifications that typically demand a certain stable behavior of all
	sequences validating approximate stationarity.
	Throughout, various approaches to verify directional
	stationarity of local minimizers are established, and
	illustrative examples are presented to make the theoretical results
	more accessible.
\end{abstract}

\begin{keywords}	
	Approximate stationarity, 
	Directional limiting variational analysis,
	Disjunctive optimization, 
	Optimality conditions, 
	Qualification conditions
\end{keywords}

\begin{msc}	
	\mscLink{49J52}, \mscLink{49J53}, \mscLink{90C30}, \mscLink{90C46}
\end{msc}

\section{Introduction}\label{sec:intro}

Approximate stationarity conditions in constrained optimization,
demanding that perturbed stationarity conditions hold along 
a sequence converging to some reference point while all involved perturbations tend to zero,
raise broad interest for mainly two reasons.
On the one hand, they provide necessary optimality conditions for local minimality,
stronger than Fritz--John type conditions,
even in the absence of a qualification condition,
see, e.g., \cite[Proposition~5.4]{KaemingFischerZemkoho2025}
and \cite[Lemma~3.4]{Mehlitz2020b}.
On the other hand, several classes of solution algorithms for constrained optimization problems
can be shown to produce sequences whose accumulation points are approximately stationary,
see, e.g., \cite[Section~5.2]{AndreaniHaeserSchuverdtSilva2012}, 
\cite[Section~5]{AndreaniHaeserSecchinSilva2019}, and \cite[Theorem~5.1]{AndreaniMartinezSvaiter2010}.
Approximate stationarity conditions can be traced back to \cite{KrugerMordukhovich1980},
where they have been derived as a consequence of the so-called extremal principle
of variational analysis,
see \cite[Sections~2.1 and 2.2, Lemma~2.32]{Mordukhovich2006} 
for a detailed explanation,
and they can also be verified via the so-called decoupling approach,
see \cite{BorweinZhu1996,Lassonde2001} for classical references 
and \cite{FabianKrugerMehlitz2024} for a modern treatment.
The aforementioned references are concerned with optimization in abstract spaces
where sharp (i.e., point-based) optimality conditions of adequate strength
typically do not hold, which is why approximate optimality conditions play a crucial role.
In \cite{AndreaniHaeserMartinez2011,AndreaniMartinezSvaiter2010},
approximate stationarity has been rediscovered in form of the so-called
approximate Karush--Kuhn--Tucker conditions for standard nonlinear optimization problems,
and their significant relevance in numerical optimization has been illustrated.
Ever since, approximate stationarity has been generalized to diverse more challenging problem classes,
like problems with 
geometric, see, e.g., \cite{AndreaniHaeserSecchinSilva2019,KaemingMehlitz2025,MovahedianPourahmad2024},
or nonsmooth constraints, see, e.g., \cite{HelouSantosSimoes2020,Mehlitz2020b,Mehlitz2023}.

In order to ensure that a given local minimizer of some constrained optimization problem
is stationary, typically, validity of a qualification condition is required.
In \cite{Gfrerer2013}, it has been shown that postulating validity of a qualification condition
in a given critical direction (and not w.r.t.\ the whole feasible set)
at the point of interest is enough for that purpose,
and that the resulting stationarity conditions can be enriched by directional information,
making them generally more restrictive 
than their non-directional counterpart.
These insights motivated the investigation of directional stationarity and regularity conditions
for diverse problem classes,
like bilevel optimization problems, see, e.g., \cite{BaiYe2022,BaiYeZeng2025},
or problems with geometric constraints, see, e.g., 
\cite{BaiYeZhang2019,BenkoCervinkaHoheisel2019,Gfrerer2014,GfrererKlatte2016,GfrererYeZhou2022,OuyangYeZhang2024}.
Typically, these directional conditions are stated in terms of 
directional limiting normal cones and subdifferentials, 
see \cite{BenkoGfrererOutrata2019} for a comprehensive study and historical references.
Alternatively, their convexified counterparts, which
correspond to directional normal cones and subdifferentials in the sense of Clarke,
may be used, see \cite{Clarke1990} for the origin of their non-directional versions.
Directional stationarity has been shown to serve as a necessary
optimality condition in the presence of the so-called directional metric subregularity
constraint qualification, see \cite{Gfrerer2013} again,
and most of the related results in the literature built upon this insight.
Throughout the paper and as a first main contribution, we will investigate several alternative approaches 
that ensure directional stationarity of a local minimizer in detail 
and compare them using illustrative examples.

In \cite{BenkoMehlitz2024b}, the authors aimed at combining
the two theories outlined above in order to define and study a suitable concept
of approximate directional stationarity for nonsmooth optimization problems.
A main result of \cite{BenkoMehlitz2024b} states that,
given a local minimizer of a constrained optimization problem,
the local minimizer is either stationary or there exists a critical direction such that
it is directionally stationary in this direction,
see \cite[Corollary~4.5]{BenkoMehlitz2024b} and \cref{thm:locmin_dir_akkt}.
In order to develop new directional qualification conditions,
which was the driving force behind \cite{BenkoMehlitz2024b},
this observation has been sufficient.
However, it fails to draw a full picture of the nature of approximate directional stationarity.
Particularly, \cite{BenkoMehlitz2024b} did not clarify whether approximate directional
stationarity conditions hold in \emph{all} critical directions.

In this paper, as a second main contribution, 
we complement the results in \cite{BenkoMehlitz2024b} by proving precisely
the latter claim for so-called implicit critical directions, see \cref{thm:locmin_dir_akkt_II},
where a direction is referred to as implicitly critical whenever it is tangent to the feasible set while being
non-ascending for the objective function.
We also show that this result does not extend to so-called explicit critical directions,
which are non-ascending for the objective function and taken from a suitable linearization cone,
see \cref{ex:AKKT_in_critical_directions}.
All these results are obtained for an optimization problem with a feasible set 
of preimage structure $F^{-1}(\Gamma)$, where $F\colon\R^n\to\R^\ell$ is
directionally differentiable as well as locally Lipschitz continuous while
$\Gamma\subset\R^\ell$ is a closed set.
As mentioned in \cref{rem:relations_to_other_CQs},
this setting covers the type of constraints used in \cite{BenkoMehlitz2024b}.
Throughout, we are working with the (directional) limiting tools of variational analysis.
Let us, however, mention that similar results can be obtained exploiting the (directional)
Clarke tools of variational analysis.

To motivate the third main contribution of this paper,
let us come back to the concept of approximate stationarity for a moment.
As mentioned earlier, the latter serves as a necessary condition for local optimality
without any additional assumptions.
One may now ask what it takes to ensure that a given approximately stationary point
is already stationary in classical sense.
This question leads to the development of so-called approximate (or asymptotic) constraint qualifications,
demanding a somewhat stable behavior of all sequences which can be used to justify
approximate stationarity of the reference point,
the first of which is the cone-continuity property that addresses standard nonlinear problems,
see \cite{AndreaniMartinezRamosSilva2016}.
The latter paper also revealed that the cone-continuity property is a rather mild
constraint qualification.
This observation motivated the extension of approximate constraint qualifications
to optimization problems with
geometric, see, e.g., 
\cite{AndreaniHaeserSecchinSilva2019,JiaKanzowMehlitzWachsmuth2023,KanzowRaharjaSchwartz2021,Ramos2021},
and
nonsmooth constraints,
see, e.g., \cite{Mehlitz2020b,Mehlitz2023},
in the finite-dimensional case,
and even reasonable infinite-dimensional counterparts have been developed,
see, e.g., 
\cite{BoergensKanzowMehlitzWachsmuth2020,KrugerMehlitz2022}.
In \cite[Section~5]{BenkoMehlitz2024b}, 
the authors enriched approximate constraint qualifications
by incorporating directional information.
The obvious conceptual drawback of approximate constraint qualifications is that,
in order to verify their validity, one has to check a technical stability property
for all sequences quantifying approximate stationarity of the reference point, 
and these are, typically, infinitely many.
These observations provided the starting point for the developments published in \cite{KaemingFischerZemkoho2025}.
Therein, the authors introduce the so-called subset Mangasarian--Fromovitz condition (subMFC for short)
for optimization problems with nonsmooth inequality constraints.
The latter condition claims the existence of a sequence quantifying approximate stationarity
while possessing a certain (point-based) property which is related to the Mangasarian--Fromovitz
constraint qualification and, thus, easy to verify.
It has been shown in \cite[Theorem~3.9]{KaemingFischerZemkoho2025} that an approximately stationary
point which satisfies subMFC is already stationary,
and the considerations in \cite[Sections~4 and 5]{KaemingFischerZemkoho2025} underline that
subMFC is a rather weak qualification condition.
In \cite{KaemingMehlitz2025}, subMFC was extended to optimization problems with smooth so-called
orthodisjunctive constraints that cover, for example,
cardinality, complementarity, switching, and vanishing constraints,
see \cite{BenkoCervinkaHoheisel2019,Mehlitz2020a} for details.
In \cref{sec:subMFC} of this paper, we generalize subMFC even further in two ways.
First, it is shown that it can be extended to optimization problems with nonsmooth orthodisjunctive
constraints. 
Second, we enrich it by incorporating directional information, thus,
complementing the results of \cite{BenkoMehlitz2024b} as well.
Our second main result, \cref{thm:ODPsubMFC}, verifies that these extensions of
subMFC serve as (directional) qualification conditions ensuring (directional) stationarity
of local minimizers.

The remainder of this paper is organized as follows.
In \cref{sec:preliminearies}, we summarize the notation used in this paper,
recall some fundamentals from variational analysis,
and comment on some first-order necessary and sufficient optimality conditions
in constrained optimization.
Afterwards, in \cref{sec:dir_QCs}, we are concerned with approximate directional
stationarity conditions and constraint qualifications in nonsmooth geometrically
constrained optimization.
In \cref{sec:dir_stat}, we recall a suitable notion of directional stationarity
and associated constraint qualifications from the literature.
\Cref{sec:approx_dir_stat} introduces an approximate counterpart of directional
stationarity and clarifies its role as a necessary optimality condition.
Thereafter, in \cref{sec:approx_CQs}, we briefly reflect on approximate
directional constraint qualifications from the literature and present some
associated consequences of our earlier findings.
In \cref{sec:subMFC}, we generalize subMFC to problems with nonsmooth orthodisjunctive
constraints, which is a special instance of the model
investigated in \cref{sec:dir_QCs}. After a motivation of this problem class
in \cref{sec:subMFC_setting}, suitable generalized notions of subMFC
are introduced and studied in \cref{sec:subMFC_concept}.
We also comment on relations to other qualification conditions.
Some concluding remarks close the paper in \cref{sec:conclusions}.

\section{Notation and preliminaries}\label{sec:preliminearies}

This section is dedicated to the presentation of comments regarding the notation used in this paper
and some preliminary results from variational analysis.
Furthermore, we review some first-order optimality conditions
for constrained optimization problems.

\subsection{Fundamental notation}

Throughout, let $\N$, $\R$, $\R_+$, and $\R_-$ denote the positive integers
as well as the real, nonnegative real, and nonpositive real numbers, respectively.
Depending on the context, $0$ is used to represent the scalar zero or the all-zero vector.
We denote the standard sign function by $\sgn\colon\R\to\{-1,0,1\}$.
Given positive integers $n,m\in\N$, we canonically identify the Cartesian product
$\R^n\times\R^m$ with $\R^{n+m}$.
Particularly, a tuple $(x_1,\ldots,x_n)$ of real numbers $x_1,\ldots,x_n\in\R$
will be identified with the vector $(x_j)_{j=1}^n\in\R^n$.
For some vector $x\in\R^n$, $\supp(x)\coloneqq\{i\in\{1,\ldots,n\}\,|\,x_i\neq 0\}$
represents the support of $x$. 
Given any nonempty index set $I\subset\{1,\dotsc,n\}$, the vector $x_I\in\R^{|I|}$ is obtained
from $x$ by deleting the components with indices in 
$\{1,\dotsc,n\}\setminus I$, while, for notational convenience, the components of $x_I$ 
are still indexed with the elements of $I$. 
An analogous notation is employed for the restriction of vector-valued mappings.
For a set $C\subset\R^n$, 
$\spa C$, $\cone C$, and $\conv C$ denote the linear, conic, and convex hull of $C$,
respectively.
The closed, convex cone $C^\circ\coloneqq\{y\in\R^n\,|\,x^\top y\leq 0\,\forall x\in C\}$
is called the polar cone of $C$. Moreover, the set $C$ is called polyhedral if it can be
represented as the union of finitely many convex polyhedral sets.

Let $\norm{\cdot}$ be the Euclidean norm in $\R^n$.
We use $\mathbb S\coloneqq\{y\in\R^n\,|\,\norm{y}=1\}$ to denote the unit sphere.
Given $\varepsilon>0$, 
$\mathbb B_\varepsilon(x)\coloneqq\{y\in\R^n\,|\,\norm{y-x}\leq\varepsilon\}$
is the closed ball centered at $x$ with radius $\varepsilon$.
Furthermore, given a direction $d\in\R^n$, $\varepsilon>0$, and $\delta>0$, 
the directional neighborhood in direction $d$ is defined by 
\[
	\mathbb{B}_{\varepsilon,\delta}(d) 
	\coloneqq 
	\left\{ y\in\mathbb B_\varepsilon(0) \,\middle|\, \bigl\Vert\norm{d} y - \norm{y}d\bigr\Vert \leq \delta\norm{y}\norm{d} \right\}.
\]
For sequences $\{x^k\}_{k=1}^\infty\subset\R^n$ and $\{\alpha_k\}_{k=1}^\infty\subset\R_+$,
we write $x^k\in\oo(\alpha_k)$ to express 
the existence of a null sequence $\{\varepsilon_k\}_{k=1}^\infty\subset[0,\infty)$
such that $\nnorm{x^k}\leq\alpha_k\varepsilon_k$ holds for all $k\in\N$.
Let us emphasize that $\{\alpha_k\}_{k=1}^\infty$ may have vanishing elements
in this definition of $\oo$.

Let $\upsilon\colon\R^n\to\R^m$ be a mapping.
Given $x\in\R^n$, recall that $\upsilon$ is called directionally differentiable at $x$
if, for each $d\in\R^n$, the directional derivative
of $\upsilon$ at $x$ in direction $d$
\[
	\upsilon'(x;d)
	\coloneqq
	\lim_{t\downarrow 0}\frac{\upsilon(x+t d)-\upsilon(x)}{t}
\]
is well-defined and componentwise finite. 
Whenever $\upsilon$ is directionally differentiable at
each point from $\R^n$, it is referred to as directionally differentiable.
Let us note that whenever $\upsilon$ is directionally differentiable and locally Lipschitz continuous,
then the limit
\[
	\lim\limits_{t\downarrow 0,\,d'\to d}\frac{\upsilon(x+t d')-\upsilon(x)}{t}
\]
is well-defined, componentwise finite, and equals $\upsilon'(x;d)$ for each $x,d\in\R^n$.
Whenever $\upsilon$ is differentiable,
$\upsilon'\colon\R^n\to\R^{m\times n}$ is used to denote the derivative of $\upsilon$.
For $x\in\R^n$, $\upsilon'(x)\in\R^{m\times n}$ is the Jacobian of $\upsilon$ at $x$.
In the case where $m\coloneqq 1$ holds, $\nabla \upsilon(x)\coloneqq \upsilon'(x)^\top$
represents the gradient of $\upsilon$ at $x$.

For a set-valued mapping $\Upsilon \colon \R^n \tto \R^\ell$, we use 
$\gph \Upsilon \coloneqq \{(x, y) \in \R^{n+\ell} \,|\, y \in \Upsilon(x)\}$
to represent its graph, and
the inverse mapping $\Upsilon^{-1} \colon \R^\ell \tto \R^n$ is defined according to
$\Upsilon^{-1}(y) \coloneqq \{x \in \R^n \,|\, (x, y) \in \gph \Upsilon\}$.
The following concept is taken from \cite[Definition~1]{Gfrerer2013}
and will be frequently used in this paper.
\begin{definition}
	Let $\Upsilon \colon \R^n \tto \R^\ell$ be a set-valued mapping, 
	$(\bar x,\bar y)\in\gph\Upsilon$, and $d\in\R^n$. Then
	$\Upsilon$ is said to be metrically subregular in direction $d$ at $(\bar x,\bar y)$
	if there are constants $\varepsilon>0$, $\delta>0$, and $\kappa>0$ such that
	\[
		\dist(x,\Upsilon^{-1}(\bar y))\leq\kappa\,\dist(\bar y,\Upsilon(x)),
		\qquad
		\forall x\in \{\bar x\} + \mathbb B_{\varepsilon,\delta}(d).
	\]
	If $d\coloneqq 0$ can be chosen,
	$\Upsilon$ is said to be metrically subregular at $(\bar x,\bar y)$.
\end{definition}

A set-valued mapping $\Upsilon\colon\R^n\tto\R^\ell$ is referred to as polyhedral
if its graph can be represented as the union of finitely many convex polyhedral sets.
It is well known from \cite[Proposition~1]{Robinson1981} that each polyhedral
set-valued mapping is metrically subregular at all points of its graph,
see \cite[Theorem~3H.3]{DontchevRockafellar2014} as well.

\subsection{Variational analysis}

For a closed set $\Omega\subset\R^n$ and some point $\bar x\in\Omega$, 
we refer to
\[
	T_\Omega(\bar x)
	\coloneqq
	\left\{d\in\R^n\,\middle|\,
		\begin{aligned}
			&\exists\{d^k\}_{k=1}^\infty\subset\R^n,\,\exists\{t_k\}_{k=1}^\infty\subset\R_+\colon
			\\
			&\quad d^k\to d,\,t_k\downarrow 0,\,\bar x+t_kd^k\in\Omega\,\forall k\in\N
		\end{aligned}
	\right\}
\]
as the tangent (or Bouligand) cone to $\Omega$ at $\bar x$.
Furthermore,
\begin{align*}
	\widehat N_\Omega(\bar x)
	&\coloneqq
	T_\Omega(\bar x)^\circ,
	\\
	N_\Omega(\bar x)
	&\coloneqq
	\left\{\eta\in\R^n\,\middle|\,
		\begin{aligned}
		&\exists\{x^k\}_{k=1}^\infty\subset\Omega,\,\exists\{\eta^k\}_{k=1}^\infty\subset\R^n\colon
		\\
		&\quad x^k\to \bar x,\,\eta^k\to\eta,\,\eta^k\in\widehat N_\Omega(x^k)\,\forall k\in\N
		\end{aligned}
	\right\}
\end{align*}
are referred to as the regular normal cone and the limiting normal cone to $\Omega$ at $\bar x$, respectively.
For the purpose of completeness, 
let us set $T_\Omega(x)\coloneqq\widehat N_\Omega(x)\coloneqq N_\Omega(x)\coloneqq\emptyset$
whenever $x\notin\Omega$.
We note that the limiting normal cone is robust in the sense that,
given $\eta\in\R^n$ as well as sequences $\{x^k\}_{k=1}^\infty\subset\Omega$ and $\{\eta^k\}_{k=1}^\infty\subset\R^n$
such that $x^k\to\bar x$, $\eta^k\to\eta$, and $\eta^k\in N_\Omega(x^k)$ for all $k\in\N$,
we also have $\eta\in N_\Omega(\bar x)$.
Finally, given some direction $d\in\R^n$, the set
\[
	N_\Omega(\bar x;d)
	\coloneqq
	\left\{\eta\in\R^n\,\middle|\,
		\begin{aligned}
			&\exists\{d^k\}_{k=1}^\infty\subset\R^n,\,\exists\{t_k\}_{k=1}^\infty\subset\R_+
			,\,\{\eta^k\}_{k=1}^\infty\subset\R^n\colon
			\\
			&\quad d^k\to d,\,t_k\downarrow 0,\,\eta^k\to\eta,\,\eta^k\in\widehat N_\Omega(\bar x+t_kd^k)\,\forall k\in\N
		\end{aligned}
	\right\}
\]
is the so-called limiting normal cone to $\Omega$ in direction $d$ at $\bar x$.
Note that this set is trivially empty if $d\notin T_\Omega(\bar x)$.
For $x\notin\Omega$, we set $N_\Omega(x;d)\coloneqq\emptyset$ for completeness.
Clearly, we have $N_\Omega(\bar x;0)=N_\Omega(\bar x)$ and $N_\Omega(\bar x;d)\subset N_\Omega(\bar x)$.
It follows, e.g., from \cite[Proposition~2]{GfrererYeZhou2022} that the directional limiting normal cone
is robust in the sense that,
given $\eta\in\R^n$ as well as sequences $\{d^k\}_{k=1}^\infty \subset\R^n$, $\{t_k\}_{k=1}^\infty\subset\R_+$,
and $\{\eta^k\}_{k=1}^\infty\subset\R^n$
such that $d^k\to d$, $t_k\downarrow 0$, $\eta^k\to\eta$, and $\eta^k\in N_\Omega(\bar x+t_kd^k)$ for all $k\in\N$,
we also have $\eta\in N_\Omega(\bar x;d)$.
We note that whenever $\Omega$ is the union of finitely many closed, convex sets, then
\begin{equation}\label{eq:dir_normal_cone_for_convex}
	N_\Omega(\bar x;d)
	\subset 
	N_\Omega(\bar x)\cap\{d\}^\perp,
\end{equation}
and equality holds if $\Omega$ is convex and $d\in T_\Omega(\bar x)$, 
see \cite[Lemma~2.1]{Gfrerer2014}.
Let us also mention that these results are preserved if $\Omega$ enjoys
these properties only locally around $\bar x$ as the (directional) limiting normal cone
only depends on the local structure of $\Omega$ around $\bar x$. 

For a directionally differentiable and locally Lipschitz continuous function $\varphi\colon\R^n\to\R$,
some point $\bar x\in\R^n$, and some direction $d\in\R^n$,
we denote the 
(analytic) limiting subdifferential of $\varphi$ in direction $d$ at $\bar x$ as
\[
	\partial \varphi(\bar{x}; d) 
	\coloneqq
	\{
	\xi\in\R^n\,|\,(\xi,-1)\in N_{\epi \varphi}((\bar x,\varphi(\bar x));(d,\varphi'(\bar x;d)))
	\},
\]
where
$\epi \varphi \coloneqq \{ (x,\alpha) \in\R^{n}\times\R \mid \varphi(x) \leq \alpha \}$ 
is the epigraph of $\varphi$.
We refer to $\partial\varphi(\bar x)\coloneqq\partial\varphi(\bar x;0)$ 
as the limiting subdifferential of $\varphi$ at $\bar x$.
Clearly, we have
\[	
	\partial\varphi(\bar x)
	=
	\{\xi\in\R^n\,|\,(\xi,-1)\in N_{\epi \varphi}((\bar x,\varphi(\bar x)))\}
\]
and $\partial \varphi(\bar{x}; d) \subset \partial\varphi(\bar x)$.

For a directionally differentiable and locally Lipschitz continuous mapping 
$F\colon\R^n\to\R^\ell$, some point $\bar x\in\R^n$, 
and $(d,w)\in T_{\gph F}((\bar x,F(\bar x)))$, the mapping
$D^*F(\bar x;(d,w))\colon\R^\ell\tto\R^n$ defined via
\[
	D^*F(\bar x;(d,w))(\upsilon)
	\coloneqq
	\{\xi\in\R^n\,|\,(\xi,-\upsilon)\in N_{\gph F}((\bar x,F(\bar x));(d,w))\},
	\qquad
	\forall\upsilon\in\R^\ell
\]
is the 
limiting coderivative of $F$ in direction $(d,w)$ at $\bar x$.
Note that we have
\[
	(d,w)\in T_{\gph F}((\bar x,F(\bar x)))
	\quad\Longleftrightarrow\quad
	w=F'(\bar x;d)
\]
due to the directional differentiability of $F$.
Furthermore, \cite[Corollary~4.1, Proposition~5.1]{BenkoGfrererOutrata2019} yield
\begin{equation}\label{eq:coderivative_vs_subdifferential}
	D^*F(\bar x;(d,F'(\bar x;d)))(\lambda)
	=
	\partial\langle\lambda,F\rangle(\bar x;d),
	\qquad
	\forall\lambda\in\R^\ell
\end{equation}
in this situation.
Herein, given $\lambda\in\R^\ell$,
$\langle\lambda,F\rangle\colon\R^n\to\R$ is the scalarization mapping
defined via $\langle\lambda,F\rangle(x)\coloneqq\lambda^\top F(x)$ for all $x\in\R^n$.
Note that choosing $d\coloneqq 0$ in the definition of the directional limiting coderivative
recovers the definition 
of the classical limiting coderivative $D^*F(\bar x)\colon\R^\ell\tto\R^n$.
According to \cite[Theorem~1.32]{Mordukhovich2018}, we have
\begin{equation}\label{eq:scalarization_rule}
	D^*F(\bar x)(\lambda)
	=
	\partial\langle\lambda,F\rangle(\bar x),
	\qquad
	\forall\lambda\in\R^\ell.
\end{equation}
Whenever $F$ is continuously differentiable at $\bar x$, we find
\[
	\partial\langle\lambda,F\rangle(\bar x;d)
	=
	\partial\langle\lambda,F\rangle(\bar x)
	=
	\{F'(\bar x)^\top\lambda\},
	\qquad
	\forall\lambda\in\R^\ell,
\]
see \cite[Remark~2.1]{BenkoGfrererOutrata2019}.

Below, we present some preliminary results for later use.
To start, let us elaborate on a robustness property of the
limiting subdifferential associated with scalarization mappings.

\begin{lemma}\label{lem:robustness_of_subdifferential_scalarization_function}
	Let $F\colon\R^n\to\R^\ell$ be directionally differentiable and locally Lipschitz continuous.
	Fix $x,d,\xi\in\R^n$, $\lambda\in\R^\ell$, and sequences
	$\{x^k\}_{k=1}^\infty\subset\R^n$, $\{\xi^k\}_{k=1}^\infty\subset\R^n$, 
	and $\{\lambda^k\}_{k=1}^\infty\subset\R^\ell$ 
	such that the convergences $x^k\to x$, $\xi^k\to\xi$, and $\lambda^k\to\lambda$
	as well as $\xi^k\in\partial\langle\lambda^k,F\rangle(x^k)$ for all $k\in\N$ hold.
	Then we have $\xi\in\partial\langle\lambda,F\rangle(x)$.
	
	If, additionally, $x^k\neq x$ for all $k\in\N$ and $(x^k-x)/\nnorm{x^k-x}\to d$ hold,
	then we even have $\xi\in\partial\langle\lambda,F\rangle(x;d)$.
\end{lemma}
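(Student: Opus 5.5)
The plan is to lift everything to the graph of $F$ and use the coderivative representations \eqref{eq:scalarization_rule} and \eqref{eq:coderivative_vs_subdifferential}. By \eqref{eq:scalarization_rule}, $\xi^k\in\partial\langle\lambda^k,F\rangle(x^k)$ means $\xi^k\in D^*F(x^k)(\lambda^k)$. Equivalently,
\[
	(\xi^k,-\lambda^k)\in N_{\gph F}((x^k,F(x^k)))
	\qquad\forall k\in\N .
\]
Since $F$ is continuous, $(x^k,F(x^k))\to(x,F(x))$, and $(\xi^k,-\lambda^k)\to(\xi,-\lambda)$. The set $\gph F$ is closed. Hence the robustness property of the limiting normal cone recalled above yields $(\xi,-\lambda)\in N_{\gph F}((x,F(x)))$. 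Applying \eqref{eq:scalarization_rule} once more gives $\xi\in\partial\langle\lambda,F\rangle(x)$, which is the first claim.

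For the directional claim, the plan is to use the directional robustness of the limiting normal cone from \cite[Proposition~2]{GfrererYeZhou2022}. Set $t_k\coloneqq\nnorm{x^k-x}$ and $d^k\coloneqq(x^k-x)/t_k$, so that $t_k\downarrow 0$ along a subsequence and $d^k\to d$. Since $x^k\to x$ and $x^k\neq x$, we have $t_k\to0$ with $t_k>0$, so passing to a monotone subsequence is harmless. In the graph space we then write
\[
	(x^k,F(x^k))=(x,F(x))+t_k\bigl(d^k,w^k\bigr),
	\qquad
	w^k\coloneqq\frac{F(x+t_kd^k)-F(x)}{t_k}.
\]
By the remark in the preliminaries that, for directionally differentiable locally Lipschitz $F$, the limit over $t\downarrow0$, $d'\to d$ of the difference quotient exists and equals $F'(x;d)$, we obtain $w^k\to F'(x;d)$. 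Therefore $(d^k,w^k)\to(d,F'(x;d))$.

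Combined with $(\xi^k,-\lambda^k)\in N_{\gph F}((x,F(x))+t_k(d^k,w^k))$, the directional robustness gives
\[
	(\xi,-\lambda)\in N_{\gph F}\bigl((x,F(x));(d,F'(x;d))\bigr),
\]
that is, $\xi\in D^*F(x;(d,F'(x;d)))(\lambda)$. Here $(d,F'(x;d))\in T_{\gph F}((x,F(x)))$, so the directional coderivative is well defined. Then \eqref{eq:coderivative_vs_subdifferential} turns this into $\xi\in\partial\langle\lambda,F\rangle(x;d)$.

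The only delicate point is the convergence $w^k\to F'(x;d)$. It requires the uniform limit with varying directions $d'\to d$, not just the one-sided directional derivative along the fixed direction $d$. That uniform limit is exactly where local Lipschitz continuity enters: $\nnorm{F(x+t_kd^k)-F(x+t_kd)}\le L t_k\nnorm{d^k-d}$. Everything else is a direct application of the stated robustness facts and the two scalarization identities.
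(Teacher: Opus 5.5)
Your proposal is correct and matches the paper's proof step for step. It lifts to $\gph F$ via \eqref{eq:scalarization_rule}, applies (directional) robustness of the limiting normal cone with the splitting $(x^k,F(x^k))=(x,F(x))+t_k(d^k,w^k)$, returns via \eqref{eq:coderivative_vs_subdifferential}, and additionally spells out the Lipschitz argument for $w^k\to F'(x;d)$, which the paper leaves to its preliminary remark on difference quotients.
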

\begin{proof}
	Noting that $\xi^k\in\partial\langle \lambda^k,F\rangle(x^k)$ is equivalent to
	$(\xi^k,-\lambda^k)\in N_{\gph F}((x^k,F(x^k)))$ for each $k\in\N$ by \eqref{eq:scalarization_rule},
	taking the limit $k\to\infty$ while exploiting robustness of the limiting normal cone
	implies $(\xi,-\lambda)\in N_{\gph F}((x,F(x)))$, which yields $\xi\in\partial\langle\lambda,F\rangle(x)$
	via \eqref{eq:scalarization_rule}.
	
	Now, assume that $x^k\neq x$ for all $k\in\N$ and $(x^k-x)/\nnorm{x^k-x}\to d$ hold.
	Then, for each $k\in\N$, we have
	\[
		(\xi^k,-\lambda^k)\in N_{\gph F}\left((x,F(x))+\nnorm{x^k-x}\left(\frac{x^k-x}{\nnorm{x^k-x}},\frac{F(x^k)-F(x)}{\nnorm{x^k-x}}\right)\right).
	\]
	Robustness of the directional limiting normal cone yields the inclusion
	\[
		(\xi,-\lambda)\in N_{\gph F}((x,F(x));(d,F'(x;d))),
	\]
	and, hence, $\xi\in\partial\langle\lambda,F\rangle(x;d)$ via \eqref{eq:coderivative_vs_subdifferential}.
\end{proof}

The following lemma provides a scaled sum rule for the directional limiting subdifferential.
Let us emphasize that picking the all-zero vector as the underlying direction yields
an analogous result for the (non-directional) limiting subdifferential.

\begin{lemma}\label{lem:sum_rule}
	Let $F\colon\R^n\to\R^\ell$ be directionally differentiable and locally Lipschitz continuous,
	and fix $x,d\in\R^n$ as well as $\lambda\in\R^\ell$.
	Then we have the inclusion
	\[
		\partial\langle\lambda,F\rangle(x;d)
		\subset
		\mathsmaller\sum\nolimits_{i=1}^\ell|\lambda_i|\partial(\sgn(\lambda_i)\,F_i)(x;d),
	\]
	and equality holds whenever all but at most one component function of $F$
	are continuously differentiable at $x$.
\end{lemma}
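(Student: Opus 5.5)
The plan is to reduce the directional sum rule to a statement about regular subgradients along the approximating sequence, apply an elementary sum rule there, and then pass to the limit using \cref{lem:robustness_of_subdifferential_scalarization_function}. Throughout, write $\varphi\coloneqq\langle\lambda,F\rangle=\sum_{i=1}^\ell|\lambda_i|\,(\sgn(\lambda_i)F_i)$. Since $F$ is locally Lipschitz, the directional limiting subdifferential admits a sequential description. Indeed, $\xi\in\partial\varphi(x;d)$ holds precisely when there are $t_k\downarrow 0$, $d^k\to d$, and regular subgradients $\xi^k\in\widehat\partial\varphi(x+t_kd^k)$ with $\xi^k\to\xi$. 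To obtain this, I would start from the definition through $N_{\epi\varphi}$ and the convergence $(\varphi(x+t_kd^k)-\varphi(x))/t_k\to\varphi'(x;d)$. Regular normals to the epigraph of a Lipschitz function with last component $-1$ correspond to regular subgradients, and the last component may be normalized because Lipschitz continuity keeps the subgradients bounded. In the case $d=0$, the sequence $t_kd^k$ is simply any null sequence.

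For the inclusion, fix $\xi\in\partial\varphi(x;d)$ and take $x^k\coloneqq x+t_kd^k$ and $\xi^k$ as above. At each $x^k$, I would apply the fuzzy sum rule for regular subdifferentials of Lipschitz functions. For every $\varepsilon_k\downarrow 0$, it provides points $x^{k,i}$ with $\nnorm{x^{k,i}-x^k}\leq\varepsilon_k t_k$ and elements $\eta^{k,i}\in\widehat\partial(\sgn(\lambda_i)F_i)(x^{k,i})$ such that $\nnorm{\xi^k-\sum_i|\lambda_i|\eta^{k,i}}\leq\varepsilon_k$. Here the indices with $\lambda_i=0$ are simply dropped. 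The scale $\varepsilon_kt_k$ is chosen so that $x^{k,i}=x+t_k\tilde d^{k,i}$ with $\tilde d^{k,i}\to d$ still holds. The $\eta^{k,i}$ are bounded by the Lipschitz constants, so after passing to a subsequence $\eta^{k,i}\to\eta^i$. Applying the second part of \cref{lem:robustness_of_subdifferential_scalarization_function} (or directly the robustness of the directional normal cone), with $F$ replaced by the scalar map $\sgn(\lambda_i)F_i$ and the multiplier $1$, yields $\eta^i\in\partial(\sgn(\lambda_i)F_i)(x;d)$. This last step requires either $\tilde d^{k,i}$ normalized or $d=0$, and both cases are covered. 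Passing to the limit gives $\xi=\sum_i|\lambda_i|\eta^i$.

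For equality, assume that all components except possibly $F_j$ are continuously differentiable at $x$. Their directional subdifferentials are then the singletons $\{\sgn(\lambda_i)\nabla F_i(x)\}$. It therefore remains to show $\partial\varphi(x;d)\supset|\lambda_j|\partial(\sgn(\lambda_j)F_j)(x;d)+\sum_{i\neq j}\lambda_i\nabla F_i(x)$. The smooth part is strictly differentiable at $x$, so along the sequence $x+t_kd^k$ its regular subgradients differ from $\lambda_i\nabla F_i(x)$ only by vanishing errors. The exact sum rule for regular subdifferentials with one strictly differentiable summand then allows us to add these terms, up to $\oo(1)$ corrections, while keeping the same sequence $t_k,d^k$. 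Passing to the limit yields the claim. When $\lambda_j=0$, the equality is immediate because $\varphi$ is then strictly differentiable at $x$.

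The main obstacle is the second step, namely ensuring that the points produced by the fuzzy sum rule remain in the same directional neighbourhood. This is why the fuzziness has to be taken of order $\oo(t_k)$ and not merely $\oo(1)$. If that causes difficulties, I would instead argue on $\gph F$ through the definition of $N_{\gph F}(\cdot;(d,F'(x;d)))$ together with \eqref{eq:coderivative_vs_subdifferential}.
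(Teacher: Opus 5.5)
Your proof is correct, but it takes a different route from the paper. The paper never works with regular subgradients. It writes $\langle\lambda,F\rangle=g\circ\Phi$, with the separable function $g(x^1,\ldots,x^\ell)\coloneqq\sum_{i=1}^\ell(\lambda_iF_i)(x^i)$ and the diagonal map $\Phi(x)\coloneqq(x,\ldots,x)$. It then invokes the directional chain rule and the formula for separable functions of Benko, Gfrerer, and Outrata, whose qualification condition holds automatically for Lipschitz data, and finishes with positive homogeneity. For equality, the paper constructs nothing new. It applies the inclusion just proved to $\lambda_1F_1=\langle\lambda,F\rangle+\sum_{i\geq 2}(-\lambda_i)F_i$, uses that the directional subdifferentials of the smooth components are singletons, and rearranges.

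Your argument is self-contained. It uses only the sequential description of $\partial\varphi(x;d)$ through regular subgradients at $x+t_kd^k$, the fuzzy sum rule with fuzziness $\varepsilon_kt_k$ (which is exactly what keeps the perturbed points in the same directional regime), and a compactness argument. This makes transparent why no qualification condition is needed. The paper's route is shorter but rests on heavier external results.

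Two points of precision:
\begin{enumerate}
\item In your sequential description, you can normalize the last component because the last components of the approximating regular normals tend to $-1$. This forces the base points onto the graph, since for continuous $\varphi$ the points strictly above the graph are interior to $\epi\varphi$. Boundedness of subgradients is not the reason.
\item Your equality step uses the exact regular sum rule at the points $x+t_kd^k$. It therefore needs the smooth part to be differentiable on a neighbourhood of $x$, not just strictly differentiable at $x$. The same fact also gives nonemptiness of $\partial\varphi(x;d)$ in your case $\lambda_j=0$.
\end{enumerate}
If you only want to use information at $x$ itself, apply the paper's rearrangement trick to your own inclusion. It gives equality with no further limiting argument.
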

\begin{proof}
	To prove the general inclusion, 
	we observe that $\langle\lambda,F\rangle=g\circ\Phi$ holds for the functions
	$g\colon\R^{\ell n}\to\R$ and $\Phi\colon\R^n\to\R^{\ell n}$ given by
	\[
		g(x^1,\ldots,x^\ell)\coloneqq \mathsmaller\sum\nolimits_{i=1}^\ell (\lambda_i\,F_i)(x^i),
		\qquad
		\forall x^1,\ldots,x^\ell\in\R^n
	\]
	and
	\[
		\Phi(x)\coloneqq(x,\ldots,x), 
		\qquad
		\forall x\in\R^n,
	\]
	respectively.
	Noting that $\lambda_i\,F_i$ is locally Lipschitz continuous for each $i\in\{1,\ldots,\ell\}$,
	the qualification condition 
	in \cite[formula (23)]{BenkoGfrererOutrata2019} is valid
	due to \cite[Corollary~5.4]{BenkoGfrererOutrata2019},
	and we can apply \cite[Proposition~4.2, Theorem~4.1]{BenkoGfrererOutrata2019} to obtain the inclusion
	\[
		\partial\langle\lambda,F\rangle(x;d)
		\subset
		\mathsmaller\sum\nolimits_{i=1}^\ell\partial(\lambda_i\,F_i)(x;d),
	\]
	where we also exploited \cite[Corollary~4.1]{BenkoGfrererOutrata2019}.
	From its definition one can easily check that the directional limiting subdifferential
	is homogeneous w.r.t.\ positive scalarization of the involved function,
	see, e.g., \cite[formula (5.6)]{LongWangYang2017}.
	Hence, the claimed inclusion is valid.
	
	To conclude the proof, let us assume without loss of generality that at most $F_1$ is not
	continuously differentiable at $x$. 
	Then we have
	\begin{align*}
		|\lambda_i|\,\partial(-\sgn(\lambda_i)\,F_i)(x;d) &= \{-\lambda_i\nabla F_i(x)\},
		\\
		|\lambda_i|\,\partial(\sgn(\lambda_i)\,F_i)(x;d) &= \{\lambda_i\nabla F_i(x)\}
	\end{align*}
	for all $i\in\{2,\ldots,\ell\}$.
	Exploiting the already verified inclusion, we find
	\begin{align*}
		|\lambda_1|\partial(\sgn(\lambda_1)\,F_1)(x;d)
		&=
		\partial(\lambda_1\,F_1)(x;d)
		\\
		&=
		\partial\left(\langle\lambda,F\rangle + \mathsmaller\sum\nolimits_{i=2}^\ell (-\lambda_i)\,F_i\right)(x;d)
		\\
		&\subset
		\partial\langle\lambda,F\rangle(x;d) + \mathsmaller\sum\nolimits_{i=2}^\ell |\lambda_i|\,\partial(-\sgn(\lambda_i)\,F_i)(x;d)
		\\
		&=
		\partial\langle\lambda,F\rangle(x;d) - \left\{\mathsmaller\sum\nolimits_{i=2}^\ell \lambda_i\nabla F_i(x)\right\},
	\end{align*}
	so that a rearrangement yields the second claim of the lemma.
\end{proof}

Formally, the general inclusion in \cref{lem:sum_rule} could also be distilled using the sum rule 
from \cite[Corollary~4.5]{BenkoGfrererOutrata2019}.
Unfortunately, the formula stated therein contains misleading typos, which is why we did not refer to it.
The first part of the proof of \cref{lem:sum_rule}, however, 
is inspired by the one of \cite[Corollary~4.5]{BenkoGfrererOutrata2019}.
Let us also mention that the second assertion of \cref{lem:sum_rule} 
can also be derived via \cite[Theorem~5.6]{LongWangYang2017}.

The next lemma considers the directional limiting normal cone to sets
given as the preimage of a closed set under some potentially nonsmooth mapping
of special structure.

\begin{lemma}\label{lem:normals_to_perturbed_feasible_set}
	Let $F\colon\R^n\to\R^\ell$ be directionally differentiable and locally Lipschitz continuous.
	Furthermore, let $\Gamma\subset\R^\ell$ be closed.
	Consider $Z\coloneqq \{(x,\delta)\in\R^{n+\ell}\,|\,F(x)-\delta\in\Gamma\}$
	and fix $(x,\delta)\in Z$.
	Then we have
	\[
		N_Z((x,\delta))
		\subset
		\{
			(\xi,-\lambda)\in\R^{n+\ell}\,|\,
			\xi\in\partial\langle\lambda,F\rangle(x),\,\lambda\in N_\Gamma(F(x)-\delta)
		\}.
	\]
\end{lemma}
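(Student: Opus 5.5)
The plan is to write $Z$ as the preimage of $\Gamma$ under the Lipschitz map $G(x,\delta)\coloneqq F(x)-\delta$, and to exploit that $G$ has a surjective partial derivative in $\delta$. This surjectivity gives a well-behaved change of coordinates. Concretely, consider the bijection
\[
	\Psi\colon\R^{n+\ell}\to\R^{n+\ell},
	\qquad
	\Psi(x,\delta)\coloneqq(x,F(x)-\delta).
\]
It is locally Lipschitz with locally Lipschitz inverse $\Psi^{-1}(x,y)=(x,F(x)-y)$, and it maps $Z$ onto $\R^n\times\Gamma$. Hence $Z=\Psi^{-1}(\R^n\times\Gamma)$. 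Instead of a general preimage rule requiring a qualification condition, I will work directly with the graph of $F$. The key observation is that $(x,\delta)\in Z$ if and only if $(x,F(x)-\delta)\in\gph F-(\{0\}\times\Gamma)$. So it is more convenient to describe $Z$ through the set
\[
	W\coloneqq\{(x,y,\delta)\,|\,(x,y)\in\gph F,\ y-\delta\in\Gamma\},
\]
and then eliminate $y=F(x)$.

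\textbf{Step 1: regular normals.} Fix $(\xi,\mu)\in\widehat N_Z((x,\delta))$. I will show that $\lambda\coloneqq-\mu\in\widehat N_\Gamma(F(x)-\delta)$ and $(\xi,-\lambda)\in\widehat N_{\gph F}((x,F(x)))$. For the first claim, test with feasible points $(x,\delta+F(x)-\delta-\gamma)$, which lie in $Z$ for $\gamma\in\Gamma$ near $F(x)-\delta$. Moving only $\delta$ gives $\langle\mu,-(\gamma-(F(x)-\delta))\rangle\le o(\nnorm{\gamma-(F(x)-\delta)})$, that is, $-\mu\in\widehat N_\Gamma(F(x)-\delta)$. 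For the second claim, test with points $(x',\delta+F(x')-F(x))\in Z$ for $x'$ near $x$; these keep $F(x')-\delta'=F(x)-\delta$. Lipschitz continuity of $F$ makes the increment $\nnorm{(x'-x,F(x')-F(x))}$ comparable to $\nnorm{x'-x}$. This yields
\[
	\langle\xi,x'-x\rangle+\langle\mu,F(x')-F(x)\rangle\le o(\nnorm{x'-x}),
\]
which is precisely $(\xi,\mu)\in\widehat N_{\gph F}((x,F(x)))$ tested along the graph. Since the graph is parametrized by $x'$, this gives regular normality. By \eqref{eq:scalarization_rule}, applied in its regular version or simply via $\widehat N\subset N$, we get $\xi\in\partial\langle\lambda,F\rangle(x)$.

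\textbf{Step 2: limiting normals.} Take $(\xi,\mu)\in N_Z((x,\delta))$, so there are $(x^k,\delta^k)\to(x,\delta)$ in $Z$ and $(\xi^k,\mu^k)\to(\xi,\mu)$ with $(\xi^k,\mu^k)\in\widehat N_Z((x^k,\delta^k))$. Step 1 gives $-\mu^k\in\widehat N_\Gamma(F(x^k)-\delta^k)$ and $\xi^k\in\partial\langle-\mu^k,F\rangle(x^k)$. Continuity of $F$ yields $F(x^k)-\delta^k\to F(x)-\delta$, so closedness of the limiting normal cone gives $\lambda=-\mu\in N_\Gamma(F(x)-\delta)$. \cref{lem:robustness_of_subdifferential_scalarization_function} then gives $\xi\in\partial\langle\lambda,F\rangle(x)$, which is the claim.

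The main obstacle is the regular-normal step. One must check that testing only along the two families of feasible perturbations (fixing $x$ while varying $\delta$ through $\Gamma$, and moving along the graph of $F$ while keeping $F-\delta$ fixed) suffices. It does: any feasible perturbation $(x',\delta')$ decomposes as a graph move to $(x',\delta+F(x')-F(x))$ followed by a $\delta$-move. Because the $o$-terms use the Lipschitz bound $\nnorm{F(x')-F(x)}\le L\nnorm{x'-x}$, each piece is controlled by the full increment. One only needs the two separate conclusions, which the restricted tests deliver directly.
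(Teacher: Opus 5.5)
Your proof is correct, but it takes a different route from the paper. The paper writes $Z=\gph F+(\{0\}\times(-\Gamma))$, invokes a sum rule for limiting normals to sums of closed sets from the literature (continuity of $F$ gives the required inner semicompactness), and then translates the resulting graph normal via \eqref{eq:scalarization_rule}.

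You instead argue by hand at the level of regular normals. Restricting the test points in the definition of $\widehat N_Z((x,\delta))$ to the feasible families $(x,F(x)-\gamma)$, $\gamma\in\Gamma$, and $(x',\delta+F(x')-F(x))$ gives $-\mu\in\widehat N_\Gamma(F(x)-\delta)$ and $(\xi,\mu)\in\widehat N_{\gph F}((x,F(x)))$. The limiting case then follows from the definition of $N_\Gamma$ together with Lemma~\ref{lem:robustness_of_subdifferential_scalarization_function} (or by passing to the limit in $N_{\gph F}$ before scalarizing). Your route is elementary and self-contained, and it makes the mechanism transparent: $\delta$ enters $F(x)-\delta$ with the identity as coefficient, so the two kinds of perturbation can be tested separately. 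The paper's route is a one-liner given the black box and fits into a general calculus for normals to sums of sets.

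Some cosmetic points:
\begin{itemize}
\item The map $\Psi$ and the set $W$ from your first paragraph are never used and should be dropped.
\item The ``key observation'' stated there is wrong as written. $(x,\delta)\in Z$ is equivalent to $(x,\delta)\in\gph F-(\{0\}\times\Gamma)$, not to $(x,F(x)-\delta)\in\gph F-(\{0\}\times\Gamma)$; the latter merely says $\delta\in\Gamma$.
\item The decomposition of an arbitrary feasible perturbation in your last paragraph is not needed for the inclusion you prove, since restricting test points only weakens the defining inequality. Combined with the Lipschitz bound, that decomposition is what yields the reverse inclusion at the regular level, which indeed holds.
\item The Lipschitz bound is also dispensable in your graph test, because the $o$-term there is already measured in $\nnorm{(x'-x,F(x')-F(x))}$. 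Local Lipschitz continuity enters only through \eqref{eq:scalarization_rule} and Lemma~\ref{lem:robustness_of_subdifferential_scalarization_function}.
\end{itemize}
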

\begin{proof}
	Noting that $Z=\gph F + (\{0\}\times(-\Gamma))$, 
	we can apply the sum rule from \cite[Section~5.1.1]{BenkoMehlitz2022}.
	The required inner semicompactness assumption follows from continuity of $F$.
	Finally, one has to keep \eqref{eq:scalarization_rule} in mind to obtain the desired result.
\end{proof}

\subsection{First-order optimality conditions}

Here, we review some elementary first-order necessary and sufficient
optimality conditions for optimization problems of the form
\begin{equation}\label{eq:elementary_P}
		\min\limits_x \quad f(x)	 
		\quad\textup{s.t.}\quad	
		x\in X,
\end{equation}
where $f\colon\R^n\to\R$ is continuously differentiable and $X\subset\R^n$
is nonempty and closed.

To start, let us inspect primal first-order optimality conditions.
\begin{proposition}\label{prop:primal_fo_conditions}
	Let $\bar x\in\R^n$ be feasible for \eqref{eq:elementary_P}.
	Then the following assertions hold.
	\begin{enumerate}
		\item\label{item:primal_fo_condition_nec}
		 	Let $\bar x$ be a local minimizer of \eqref{eq:elementary_P}.
			Then, for all $d\in T_X(\bar{x})$, we have $f'(\bar{x})d \geq 0$.
		\item\label{item:primal_fo_condition_suf} 
			Assume that $f'(\bar x)d>0$ holds for all $d\in T_X(\bar x)\setminus\{0\}$.
			Then $\bar x$ is a strict local minimizer of \eqref{eq:elementary_P}.
	\end{enumerate}
\end{proposition}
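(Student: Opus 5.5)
Both assertions follow from a first-order Taylor expansion of $f$ along sequences realizing tangent directions, combined with the definition of the tangent cone $T_X(\bar x)$.

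For \ref{item:primal_fo_condition_nec}, we fix $d\in T_X(\bar x)$ and take sequences $d^k\to d$ and $t_k\downarrow 0$ with $\bar x+t_kd^k\in X$ for all $k\in\N$, as in the definition of the tangent cone. Since $\bar x$ is a local minimizer, for all sufficiently large $k$ the point $\bar x+t_kd^k$ lies in the neighborhood on which $f(\bar x)\leq f(x)$ holds for feasible $x$. Hence
\[
	0\leq \frac{f(\bar x+t_kd^k)-f(\bar x)}{t_k}.
\]
By continuous differentiability of $f$ (in particular, local Lipschitz continuity together with directional differentiability), the right-hand side converges to $f'(\bar x)d$ as $k\to\infty$. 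The remark on the limit with $d'\to d$ stated after the definition of directional differentiability gives exactly this. We conclude $f'(\bar x)d\geq 0$.

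For \ref{item:primal_fo_condition_suf}, we argue by contradiction. Suppose $\bar x$ is not a strict local minimizer. Then there is a sequence $\{x^k\}_{k=1}^\infty\subset X\setminus\{\bar x\}$ with $x^k\to\bar x$ and $f(x^k)\leq f(\bar x)$ for all $k\in\N$. We set $t_k\coloneqq\nnorm{x^k-\bar x}$ and $d^k\coloneqq (x^k-\bar x)/t_k\in\mathbb S$. By compactness of $\mathbb S$, after passing to a subsequence we may assume $d^k\to d$ for some $d\in\mathbb S$. By construction $\bar x+t_kd^k=x^k\in X$ and $t_k\downarrow 0$, so $d\in T_X(\bar x)\setminus\{0\}$. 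As before,
\[
	0\geq\frac{f(\bar x+t_kd^k)-f(\bar x)}{t_k}\to f'(\bar x)d,
\]
so $f'(\bar x)d\leq 0$, which contradicts the assumption $f'(\bar x)d>0$.

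The only step needing any care is justifying that the difference quotients along the varying directions $d^k$ converge to $f'(\bar x)d$. Since $f$ is continuously differentiable, this follows directly from the mean value theorem, or from the limit statement recorded in the preliminaries, so no real obstacle arises. The remaining ingredient is the compactness argument that produces a nonzero tangent direction in part \ref{item:primal_fo_condition_suf}.
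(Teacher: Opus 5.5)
Your proof is correct, and it is essentially the paper's route. The paper gives no argument of its own; it cites Theorem~6.12 of Rockafellar and Wets for part (a) and Proposition~1 of Mehlitz and Zemkoho for part (b), and your sequential argument is the standard reasoning behind both results (the only cosmetic point is that in part (b) you may pass to a further subsequence so that $t_k$ decreases monotonically, which is harmless).
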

\begin{proof}
	The first assertion is taken from \cite[Theorem~6.12]{RockafellarWets1998},
	while the second one can be distilled from \cite[Proposition~1]{MehlitzZemkoho2021}.
\end{proof}

For the formulation of dual optimality conditions,
we will make use of a so-called critical cone.
Given a feasible point $\bar x\in \R^n$ of \eqref{eq:elementary_P},
the (implicit) critical cone is given by
\begin{equation}\label{eq:implicit_critical_cone}
	\widehat C(\bar x)
	\coloneqq
	\{d\in \R^n\,|\,f'(\bar x)d\leq 0,\,d\in T_X(\bar x)\}.
\end{equation}
Note that, due to \cref{prop:primal_fo_conditions}\,\ref{item:primal_fo_condition_nec},
whenever $\bar x$ is a local minimizer of \eqref{eq:elementary_P}, then
\[
	\widehat C(\bar x)
	=
	\{d\in \R^n\,|\,f'(\bar x)d = 0,\,d\in T_X(\bar x)\}.
\]
Furthermore, \cref{prop:primal_fo_conditions}\,\ref{item:primal_fo_condition_suf}
yields the following corollary.
\begin{corollary}\label{cor:primal_fo_conditions_suf}
	Let $\bar x\in\R^n$ be feasible for \eqref{eq:elementary_P}
	such that $\widehat C(\bar x)=\{0\}$.
	Then $\bar x$ is a strict local minimizer of \eqref{eq:elementary_P}.
\end{corollary}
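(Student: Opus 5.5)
The corollary follows directly from \cref{prop:primal_fo_conditions}\,\ref{item:primal_fo_condition_suf}. That result needs $f'(\bar x)d>0$ for all $d\in T_X(\bar x)\setminus\{0\}$, so it suffices to derive this strict positivity from the assumption $\widehat C(\bar x)=\{0\}$.

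Fix an arbitrary $d\in T_X(\bar x)\setminus\{0\}$ and argue by contradiction. Suppose $f'(\bar x)d\leq 0$. Then $d$ satisfies both defining conditions in \eqref{eq:implicit_critical_cone}: it is tangent to $X$ at $\bar x$ and it is non-ascending for $f$. Hence $d\in\widehat C(\bar x)=\{0\}$, which contradicts $d\neq 0$. Consequently $f'(\bar x)d>0$ for every nonzero tangent direction.

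The hypothesis of \cref{prop:primal_fo_conditions}\,\ref{item:primal_fo_condition_suf} is therefore satisfied, and $\bar x$ is a strict local minimizer of \eqref{eq:elementary_P}.

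There is no real obstacle here. The only point to watch is that \cref{prop:primal_fo_conditions}\,\ref{item:primal_fo_condition_nec} is not invoked: $\bar x$ is only assumed feasible, so we use the definition of $\widehat C(\bar x)$ with the inequality $f'(\bar x)d\leq 0$, not the equality version valid at local minimizers.
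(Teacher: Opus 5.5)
Your proposal is correct and follows the paper's approach: the paper derives this corollary directly from \cref{prop:primal_fo_conditions}\,\ref{item:primal_fo_condition_suf}, and you supply the one-line contrapositive showing that $\widehat C(\bar x)=\{0\}$ gives $f'(\bar x)d>0$ for all $d\in T_X(\bar x)\setminus\{0\}$. You are also right not to invoke \cref{prop:primal_fo_conditions}\,\ref{item:primal_fo_condition_nec}, since $\bar x$ is only assumed feasible.
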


As announced, we will now review some dual first-order optimality conditions
for \eqref{eq:elementary_P}.

\begin{proposition}\label{prop:dual_fo_conditions}
	Let $\bar x\in\R^n$ be a local minimizer of \eqref{eq:elementary_P}.
	Then the following assertions hold.
	\begin{enumerate}
		\item\label{item:dual_fo_cond_regular}
		 	We have $-\nabla f(\bar x)\in\widehat N_X(\bar x)$.
		\item\label{item:dual_fo_cond_direcioanal} 
			For all $d\in \widehat C(\bar x)\cap\mathbb S$, we have
			$-\nabla f(\bar{x}) \in N_X(\bar{x};d)$.
	\end{enumerate}
\end{proposition}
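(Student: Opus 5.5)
Part (a) follows directly from \cref{prop:primal_fo_conditions}\,\ref{item:primal_fo_condition_nec}. Since $f$ is continuously differentiable, local minimality gives $f'(\bar x)d=\nabla f(\bar x)^\top d\geq 0$ for all $d\in T_X(\bar x)$. Equivalently, $(-\nabla f(\bar x))^\top d\leq 0$ for all $d\in T_X(\bar x)$, i.e., $-\nabla f(\bar x)\in T_X(\bar x)^\circ=\widehat N_X(\bar x)$ by definition of the regular normal cone.

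For part (b), fix $d\in\widehat C(\bar x)\cap\mathbb S$. Then $d\in T_X(\bar x)$, and local minimality forces $f'(\bar x)d=0$. The limit object $-\nabla f(\bar x)$ does not depend on the base point, so the plan is to realize it as a limit of regular normals at points $\bar x+t_kd^k\in X$. Pointwise regular normals of $X$ near $\bar x$ need not contain $-\nabla f(\bar x)$, so I would use a localization/penalization argument.

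Pick sequences $t_k\downarrow 0$ and $d^k\to d$ with $\bar x+t_kd^k\in X$, and set $x^k\coloneqq\bar x+t_kd^k$. Let $\varepsilon>0$ be such that $\bar x$ minimizes $f$ over $X\cap\mathbb B_\varepsilon(\bar x)$. For each $k$, consider the auxiliary problem
\[
	\min_x\; f(x)+\tfrac{1}{2}\nnorm{x-\bar x}^2\quad\text{s.t.}\quad x\in X\cap\mathbb B_{r_k}(x^k)
\]
with a radius $r_k\coloneqq \rho_k t_k$, where $\rho_k\downarrow 0$ slowly. This problem has a minimizer $y^k$ by compactness. The goal is to show three things: $y^k$ lies in the interior of the ball, $y^k=\bar x+t_k\tilde d^k$ with $\tilde d^k\to d$, and the optimality condition at $y^k$ gives $-\nabla f(y^k)-(y^k-\bar x)\in\widehat N_X(y^k)$ by part (a) applied locally. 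Since $\nabla f(y^k)\to\nabla f(\bar x)$ and $y^k-\bar x\to 0$, the definition of $N_X(\bar x;d)$ would then give the claim.

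The second point, $\tilde d^k=(y^k-\bar x)/t_k\to d$, is automatic because $\nnorm{y^k-x^k}\leq\rho_kt_k$. The main obstacle is the first point, namely that $y^k$ avoids the boundary of the ball. Otherwise the ball constraint contributes a normal of size not controlled by $f$, and I see no clean way to guarantee interiority.

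I would therefore instead use Ekeland's variational principle on $\varphi\coloneqq f+\delta_X$ restricted near $x^k$. Since $f(x^k)-f(\bar x)=t_k\nabla f(\bar x)^\top d^k+\oo(t_k)=\oo(t_k)$ (using $f'(\bar x)d=0$) and $f\geq f(\bar x)$ on $X$ near $\bar x$, the point $x^k$ is an $\eta_k$-minimizer with $\eta_k\in\oo(t_k)$. Choosing $\lambda_k\coloneqq\sqrt{\eta_kt_k}=\oo(t_k)$, Ekeland yields $y^k\in X$ with $\nnorm{y^k-x^k}\leq\lambda_k$, so again $(y^k-\bar x)/t_k\to d$. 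The point $y^k$ locally minimizes $f+(\eta_k/\lambda_k)\nnorm{\cdot-y^k}$ over $X$, and $\eta_k/\lambda_k=\sqrt{\eta_k/t_k}\to 0$. By the regular subdifferential sum rule with a smooth term, $-\nabla f(y^k)\in\widehat N_X(y^k)+(\eta_k/\lambda_k)\mathbb B_1(0)$, i.e., there exist $\eta^k\in\widehat N_X(y^k)$ with $\eta^k\to-\nabla f(\bar x)$. If $y^k=\bar x$ for some $k$, I would handle that case separately via robustness, or bypass it by noting that $y^k\neq\bar x$ holds for large $k$ since $\nnorm{y^k-\bar x}\geq t_k(\nnorm{d^k}-\oo(1))>0$. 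Writing $y^k=\bar x+t_k\tilde d^k$ with $\tilde d^k\to d$ then yields $-\nabla f(\bar x)\in N_X(\bar x;d)$.
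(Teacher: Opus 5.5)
Part (a) is exactly the paper's argument: it is the polar reformulation of \cref{prop:primal_fo_conditions}\,\ref{item:primal_fo_condition_nec}. For part (b) the routes differ. The paper notes $f'(\bar x)d=0$ and then cites \cite[Proposition~3.2]{OuyangYeZhang2024}. You give a self-contained Ekeland argument, which is a legitimate alternative and closely mirrors the first half of the paper's proof of \cref{thm:locmin_dir_akkt_II}. However, one step fails as written.

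From local minimality of $y^k$ for $f+\mu_k\nnorm{\cdot-y^k}$ over $X$, where $\mu_k\coloneqq\eta_k/\lambda_k$, you conclude $-\nabla f(y^k)\in\widehat N_X(y^k)+\mu_k\mathbb B_1(0)$. The exact regular sum rule only lets you split off the smooth term $f$. The remaining summand $\mu_k\nnorm{\cdot-y^k}+\delta_X$ is nonsmooth at $y^k$. Its regular subdifferential at $y^k$ is $\{v\,|\,v^\top w\leq\mu_k\nnorm{w}\ \forall w\in T_X(y^k)\}$, and this set is in general not contained in $T_X(y^k)^\circ+\mu_k\mathbb B_1(0)$ when $T_X(y^k)$ is nonconvex.

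For example, suppose $T_X(y^k)=\cone\{a\}\cup\cone\{b\}$ with $a=(\cos\theta,\sin\theta)$ and $b=(-\cos\theta,\sin\theta)$. Then $v\coloneqq(0,\mu_k/\sin\theta)$ satisfies $v^\top w\leq\mu_k\nnorm{w}$ on $T_X(y^k)$. Yet $\dist(v,\widehat N_X(y^k))=\mu_k/\sin\theta$, since $\widehat N_X(y^k)\subset\R\times\R_-$ and contains $0$. The factor $1/\sin\theta$ depends on the geometry at $y^k$, which you do not control. So your argument does not force $\dist(-\nabla f(y^k),\widehat N_X(y^k))\to 0$.

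The repair is standard and is what the paper does in the proof of \cref{thm:locmin_dir_akkt_II}.
\begin{enumerate}
\item Use the limiting optimality condition \cite[Theorem~8.15, Exercise~8.8(c)]{RockafellarWets1998}, which applies because $f+\mu_k\nnorm{\cdot-y^k}$ is locally Lipschitz. This gives $\xi^k\in N_X(y^k)$ with $\nnorm{\xi^k+\nabla f(y^k)}\leq\mu_k$, using \emph{limiting} normals.
\item Since $y^k=\bar x+t_k\tilde d^k$ with $\tilde d^k\to d$, the robustness of the directional limiting normal cone recalled in \cref{sec:preliminearies} yields $-\nabla f(\bar x)\in N_X(\bar x;d)$.
\end{enumerate}
Alternatively, approximate each $\xi^k$ by a regular normal at some $z^k\in X$ with $\nnorm{z^k-y^k}\leq t_k/k$.

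There is also a minor degenerate case. Your $\eta_k=f(x^k)-f(\bar x)$ may vanish, which makes $\lambda_k=0$ and $\eta_k/\lambda_k$ undefined. Replace it by $f(x^k)-f(\bar x)+t_k^2$, as the paper does. Or note that in that case $x^k$ is already a local minimizer of $f$ over $X$, so part (a) applies directly at $x^k$.
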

\begin{proof}
	The first assertion is taken from \cite[Theorem~6.12]{RockafellarWets1998}
	again and, in fact, equivalent to 
	\cref{prop:primal_fo_conditions}\,\ref{item:primal_fo_condition_nec}.
	To justify the second assertion,
	given $d\in\widehat C(\bar x)\cap\mathbb{S}$,
	we first recall that $f'(\bar x)d=0$ holds as $\bar x$ is a local minimizer
	of \eqref{eq:elementary_P}.
	Then the desired result follows from \cite[Proposition~3.2]{OuyangYeZhang2024}.
\end{proof}

Let us explain why merely considering directions $d\in\widehat C(\bar x)\cap \mathbb{S}$
in \cref{prop:dual_fo_conditions}\,\ref{item:dual_fo_cond_direcioanal} is reasonable.
First, $d\in T_X(\bar x)$ is needed because $N_X(\bar x;d)$ would be
empty otherwise.
Second, for any such direction $d$, \cref{prop:primal_fo_conditions}\,\ref{item:primal_fo_condition_nec}
yields $f'(\bar x)d\geq 0$. 
If $f'(\bar x)d>0$, then $f(\bar x+td)>f(\bar x)$ must be true for all sufficiently small $t>0$,
such that direction $d$ is not critical for the local optimality of $\bar x$
as the local variational behavior of $f$ in that direction is already clear.
Hence, it remains to discuss directions from $\widehat C(\bar x)$.
Concerning $d\coloneqq 0$, we recall $N_X(\bar x;0)=N_X(\bar x)$,
and as $-\nabla f(\bar x)\in N_X(\bar x)$ is already covered by the (generally less restrictive) condition
provided in \cref{prop:dual_fo_conditions}\,\ref{item:dual_fo_cond_regular}, this case is not relevant for
\cref{prop:dual_fo_conditions}\,\ref{item:dual_fo_cond_direcioanal}.
Finally, as $\widehat C(\bar x)$ is a cone,
it is enough to inspect directions from $\widehat C(\bar x)\cap\mathbb S$.

\section{Approximate directional stationarity conditions and constraint qualifications in geometrically-constrained optimization}\label{sec:dir_QCs}

We are concerned with the optimization problem
\begin{equation}\label{eq:nonsmooth_problem}\tag{P}
	\min\limits_x \quad f(x)	 
	\quad\textup{s.t.}\quad	
	F(x)\in \Gamma,
\end{equation}
where $f\colon\R^n\to\R$ is continuously differentiable,
$F\colon\R^n\to\R^\ell$ is directionally differentiable and locally Lipschitz continuous,
and $\Gamma\subset\R^\ell$ is a nonempty, closed set.
For later use, we denote the feasible set of \eqref{eq:nonsmooth_problem} 
by $X\coloneqq F^{-1}(\Gamma)\subset\R^n$.
Let us note that assuming continuous differentiability of the objective function 
is not too restrictive in \eqref{eq:nonsmooth_problem}. 
Indeed, if $f$ is assumed to be merely directionally differentiable and locally Lipschitz continuous,
one could consider the surrogate problem
\[
	\begin{aligned}
		&\min\limits_{x,\alpha}&	&\alpha&	 
		\\
		&\text{\,s.t.}&	&(f(x)-\alpha,F(x)) \in \R_-\times \Gamma&
	\end{aligned}
\]
instead, for which it is known that its local and global minimizers correspond to
the local and global minimizers of \eqref{eq:nonsmooth_problem}, 
see e.g.\ \cite{Stein2025} for a recent survey on this so-called epigraph reformulation of \eqref{eq:nonsmooth_problem}.

This section investigates (approximate) directional stationarity conditions
and constraint qualifications for \eqref{eq:nonsmooth_problem}. After
revisiting an appropriate notion of directional stationarity and corresponding
constraint qualifications from the literature in \cref{sec:dir_stat},
we develop an approximate counterpart and examine its role as a necessary optimality
condition in much detail in \cref{sec:approx_dir_stat}. We conclude with a discussion of approximate
constraint qualifications in \cref{sec:approx_CQs}. Altogether, we will develop multiple 
different approaches to verify directional stationarity of local minimizers.

\subsection{Directional stationarity conditions}\label{sec:dir_stat}

Our aim is to study directional optimality conditions and constraint qualifications  
for problem \eqref{eq:nonsmooth_problem}, 
see \cite[Section~3]{BaiYe2022} for an introduction addressing the special case of
inequality-constrained problems.
Besides the implicit critical cone from \eqref{eq:implicit_critical_cone},
we are concerned with the so-called explicit critical cone
of \eqref{eq:nonsmooth_problem}
associated with some feasible point $\bar x\in\R^n$ of that problem and given by
\begin{align*}
	C(\bar x)
	&\coloneqq
	\{d\in\R^n\,|\,f'(\bar x)d\leq 0,\,d\in T^\textup{lin}_{F,\Gamma}(\bar x)\}.
\end{align*}
Above, we made use of the linearization cone
\[
	T^\textup{lin}_{F,\Gamma}(\bar x)
	\coloneqq
	\{d\in\R^n\,|\,F'(\bar x;d)\in T_\Gamma(F(\bar x))\}.
\]
Observe that $T_X(\bar x)\subset T^\textup{lin}_{F,\Gamma}(\bar x)$ is valid.
Indeed, for some $\bar x\in X$, $d\in T_X(\bar x)$ guarantees
the existence of sequences
$\{d^k\}_{k=1}^\infty\subset\R^n$ and $\{t_k\}_{k=1}^\infty\subset\R_+$
such that $d^k\to d$, $t_k\downarrow 0$, and $F(\bar x+t_kd^k)\in \Gamma$ for all $k\in\N$.
Hence,
\[
	F(\bar x)+t_k\frac{F(\bar x+t_kd^k)-F(\bar x)}{t_k}\in \Gamma,
	\qquad
	\forall k\in\N,
\]
and as we have $(F(\bar x+t_kd^k)-F(\bar x))/t_k\to F'(\bar x;d)$ by
directional differentiability and local Lipschitz continuity of $F$, $F'(\bar x;d)\in T_\Gamma(F(\bar x))$
follows by definition of the tangent cone.
Due to $T_X(\bar x)\subset T^\textup{lin}_{F,\Gamma}(\bar x)$, we always have 
$\widehat C(\bar x)\subseteq C(\bar x)$,
where $\widehat C(\bar x)$ is the implicit critical cone from \eqref{eq:implicit_critical_cone}.

Below, we recall the definitions of two prominent stationarity conditions
that address \eqref{eq:nonsmooth_problem}.
\begin{definition}\label{def:dir_MStat}
	Let $\bar x\in\R^n$ be feasible for \eqref{eq:nonsmooth_problem}.
	\begin{enumerate}
		\item We say that $\bar x$ is M-stationary 
			if there is a multiplier $\lambda\in\R^\ell$ solving the system
			\begin{subequations}\label{eq:Mst}
				\begin{align}
					\label{eq:Mst_x}
						-\nabla f(\bar x)&\in \partial\langle\lambda,F\rangle(\bar x),
						\\
					\label{eq:Mst_lambda}
						\lambda&\in N_\Gamma(F(\bar x)).
				\end{align}
			\end{subequations}
		\item\label{item:dir_MStat} 
			For $d\in C(\bar x)$, we say that $\bar x$ is M-stationary in direction $d$
			if there is a multiplier $\lambda\in\R^\ell$ solving the system
			\begin{subequations}\label{eq:dir_Mst}
				\begin{align}
					\label{eq:dir_Mst_x}
						-\nabla f(\bar x)&\in \partial\langle\lambda,F\rangle(\bar x;d),
						\\
					\label{eq:dir_Mst_lambda}
						\lambda&\in N_\Gamma(F(\bar x);F'(\bar x;d)).
				\end{align}
			\end{subequations}
	\end{enumerate}
\end{definition}
Clearly, whenever a feasible point $\bar x\in\R^n$ of \eqref{eq:nonsmooth_problem} 
is M-stationary in direction $d\in C(\bar x)$, 
then (non-directional) M-stationarity of $\bar x$ is inherent.
Noting that the conditions \eqref{eq:dir_Mst} reduce to \eqref{eq:Mst} for $d\coloneqq 0$,
one typically is interested in M-stationarity in directions from $C(\bar x)\cap\mathbb S$.
Let us emphasize that in \cref{def:dir_MStat}\,\ref{item:dir_MStat} we use the 
explicit critical cone $C(\bar x)$,
which is always computable in terms of initial problem data,
instead of the generally smaller
implicit critical cone $\widehat C(\bar x)$,
which might be difficult to access.

Next, we are going to review some constraint qualifications for \eqref{eq:nonsmooth_problem}.

\begin{definition}
	Let $\bar{x}\in\R^n$ be feasible for \eqref{eq:nonsmooth_problem}.
	We say that the metric subregularity constraint qualification
	in direction $d\in\R^n$ (MSCQ(d)) holds at $\bar{x}$ if 
	there exist positive constants $\varepsilon>0$, $\delta>0$, and $\kappa>0$ such that
	\[
		\dist (x, F^{-1}(\Gamma)) \leq \kappa \dist (F(x),\Gamma),
		\qquad
		\forall x\in \{\bar{x}\} + \mathbb B_{\varepsilon,\delta}(d).
	\]
	If $d\coloneqq 0$ can be chosen,
	we say that MSCQ holds at $\bar x$.
\end{definition}

It is well known that MSCQ (MSCQ$(d)$) at a feasible point $\bar x\in\R^n$ of 
\eqref{eq:nonsmooth_problem} is equivalent to the 
metric subregularity of the feasibility mapping $x\mapsto \{F(x)\}-\Gamma$ 
of \eqref{eq:nonsmooth_problem} at $(\bar x,0)$ (in direction $d$). Note that
the definition of (directional) metric subregularity of a set-valued mapping 
can be found in \cite[Definition 1.2]{Gfrerer2013}.
As a consequence of the equivalence mentioned above, we can apply \cite[Lemma~2.7]{Gfrerer2014}
to find that validity of MSCQ at $\bar x$ is equivalent to validity of 
MSCQ$(d)$ at $\bar{x}$ for each nonvanishing direction $d\in\R^n$.

\begin{definition}\label{def:NNAMCQ}
	Let $\bar x\in\R^n$ be feasible for \eqref{eq:nonsmooth_problem}.
	We say that the first-order sufficient condition for metric subregularity
	in direction $d\in\R^n$ (FOSCMS$(d)$) holds at $\bar x$ whenever
	\[
	0\in \partial\langle\lambda,F\rangle(\bar x;d),\,
	\lambda\in N_\Gamma(F(\bar x);F'(\bar x;d))
	\quad\Longrightarrow\quad
	\lambda=0
	\]
	holds. If $d\coloneqq0$ can be chosen, i.e., if
	\[
	0\in \partial\langle\lambda,F\rangle(\bar x),\,
	\lambda\in N_\Gamma(F(\bar x))
	\quad\Longrightarrow\quad
	\lambda=0
	\]
	is valid, the no nonzero abnormal multiplier constraint qualification (NNAMCQ)
	is said to hold at $\bar x$.
\end{definition}

Given a feasible point $\bar x\in\R^n$ of \eqref{eq:nonsmooth_problem},
it is obvious that NNAMCQ, 
which is also referred to as the generalized Mangasarian--Fromovitz
constraint qualification in the literature,
is sufficient for FOSCMS$(d)$ 
for each $d\in \R^n$.
Let us mention that FOSCMS$(d)$ originates from \cite{GfrererKlatte2016}. 
It should be noted that FOSCMS$(d)$ at $\bar x$
is trivially satisfied for all $d\in \R^n\setminus T^\textup{lin}_{F,\Gamma}(\bar x)$
as $N_\Gamma(F(\bar x);F'(\bar x;d))$ is empty in this case.
For $d\in\R^n$,
\cite[Proposition~2.2]{BenkoGfrererOutrata2019} and \eqref{eq:coderivative_vs_subdifferential} show that
FOSCMS$(d)$ at $\bar x$ is sufficient for MSCQ$(d)$ at $\bar x$.

In the subsequently stated lemma, we list some consequences of MSCQ and its directional
version.
\begin{lemma}\label{lem:preimage_rule_dir_MS}
	Let $\bar x\in\R^n$ be feasible for \eqref{eq:nonsmooth_problem}.
	Then the following assertions hold.
	\begin{enumerate}
		\item\label{lem:preimage_rule_dir_MS_nondir} 
			If MSCQ holds at $\bar x$, then
			\[
				T_X(\bar x)
				=
				T^\textup{lin}_{F,\Gamma}(\bar x),
				\qquad
				N_X(\bar x)
				\subset 
				\bigcup\limits_{\lambda\in N_\Gamma(F(\bar x))}
				\partial\langle \lambda,F\rangle(\bar x).
			\]
		\item\label{lem:preimage_rule_dir_MS_dir} 
			Given $d\in T^\textup{lin}_{F,\Gamma}(\bar x)$ such that
			MSCQ$(d)$ holds at $\bar x$, we have $d\in T_X(\bar x)$
			and 
			\[
				N_X(\bar x;d)
				\subset
				\bigcup\limits_{\lambda\in N_\Gamma(F(\bar x);F'(\bar x;d))}
				\partial\langle \lambda,F\rangle(\bar x;d).
			\]
	\end{enumerate}
\end{lemma}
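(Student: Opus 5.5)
Both parts follow the same two-step pattern. First, the distance estimate from MSCQ(d) turns linearized tangents into genuine tangents. Second, the normal cone to $X=F^{-1}(\Gamma)$ is expressed through the set $Z$ of \cref{lem:normals_to_perturbed_feasible_set} and then pushed through the directional robustness of \cref{lem:robustness_of_subdifferential_scalarization_function}. Part (a) is the special case $d=0$ of part (b), combined with the fact that MSCQ is equivalent to MSCQ$(d)$ for all $d\neq0$ (via \cite[Lemma~2.7]{Gfrerer2014}). Alternatively, part (a) follows by repeating the argument with $\mathbb B_{\varepsilon,\delta}(0)=\mathbb B_\varepsilon(0)$. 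So I concentrate on (b).

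\emph{Tangent part.} Fix $d\in T^\textup{lin}_{F,\Gamma}(\bar x)$, so $F'(\bar x;d)\in T_\Gamma(F(\bar x))$. Choose $t_k\downarrow0$ and $w^k\to F'(\bar x;d)$ with $F(\bar x)+t_kw^k\in\Gamma$. For $d\neq0$ the points $\bar x+t_kd$ lie in $\{\bar x\}+\mathbb B_{\varepsilon,\delta}(d)$ for large $k$. Then
\[
\dist(\bar x+t_kd,X)\le\kappa\,\dist(F(\bar x+t_kd),\Gamma)\le\kappa\nnorm{F(\bar x+t_kd)-F(\bar x)-t_kw^k}\in\oo(t_k),
\]
by directional differentiability. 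Picking projections $x^k\in X$ of $\bar x+t_kd$ and setting $d^k\coloneqq(x^k-\bar x)/t_k\to d$ gives $d\in T_X(\bar x)$. For $d=0$ this is trivial.

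\emph{Normal cone part.} Take $\eta\in N_X(\bar x;d)$, realized by $\eta^k\in\widehat N_X(\bar x+t_kd^k)$ with $d^k\to d$, $t_k\downarrow0$, $\eta^k\to\eta$. I will use the standard consequence of metric subregularity that a regular normal to $X$ at a point $x$ near $\bar x$ in the directional neighborhood satisfies $\widehat N_X(x)\subset \kappa\nnorm{\eta}\,\partial_x\dist(F(\cdot),\Gamma)$-type estimates. Concretely, I plan to argue as follows:
\begin{itemize}
\item Since $\eta^k\in\widehat N_X(x^k)$, the point $x^k\coloneqq\bar x+t_kd^k$ is a local minimizer, up to $\oo(\nnorm{\cdot-x^k})$, of $-\langle\eta^k,\cdot\rangle$ over $X$.
\item By the MSCQ$(d)$ distance estimate, $X$ may be replaced by an exact penalty $\kappa\nnorm{\eta^k}\dist(F(\cdot),\Gamma)$, valid in the directional neighborhood, which contains a ball around $x^k$ of radius proportional to $t_k$.
\item Rewriting the penalty as $\dist((x,\delta),Z)$-type minimization over $(x,\delta)$ with $F(x)-\delta\in\Gamma$ and $\nnorm\delta$ penalized, the Fermat rule and \cref{lem:normals_to_perturbed_feasible_set} at points $(\tilde x^k,\delta^k)$ close to $(x^k,0)$ yield $\lambda^k\in N_\Gamma(F(\tilde x^k)-\delta^k)$ with $\nnorm{\lambda^k}\le\kappa\nnorm{\eta^k}$ and $\eta^k+\oo(1)\in\partial\langle\lambda^k,F\rangle(\tilde x^k)$.
\end{itemize}

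\emph{Limit passage.} Boundedness gives $\lambda^k\to\lambda$. Because $\tilde x^k-\bar x=t_k(d+\oo(1))$, \cref{lem:robustness_of_subdifferential_scalarization_function} yields $\eta\in\partial\langle\lambda,F\rangle(\bar x;d)$. Since $F(\tilde x^k)-\delta^k=F(\bar x)+t_k(F'(\bar x;d)+\oo(1))$, the robustness of the directional limiting normal cone gives $\lambda\in N_\Gamma(F(\bar x);F'(\bar x;d))$. For $d=0$ the same argument uses plain robustness.

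\emph{Main obstacle.} The delicate step is the localized penalization. One must ensure that the approximate minimizers $(\tilde x^k,\delta^k)$, produced for instance by Ekeland's principle with radius $o(t_k)$, stay inside the directional neighborhood where MSCQ$(d)$ holds, and that $\delta^k/t_k\to0$. I would control this by choosing the perturbation radius $\sqrt{\varepsilon_k}\,t_k$, where $\varepsilon_k$ is the regular normal error. This mirrors \cite[Proposition~2.2]{BenkoGfrererOutrata2019}. Alternatively, I could simply cite the directional preimage rule from \cite{BenkoGfrererOutrata2019,Gfrerer2013} applied to $x\mapsto F(x)-\Gamma$, and translate coderivatives via \eqref{eq:coderivative_vs_subdifferential}.
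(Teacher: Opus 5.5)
Your proof is correct. For the tangent statements it matches the paper, but for the normal-cone inclusions it takes a different route.

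\textbf{What the paper does.} It proves nothing by hand here. The tangent claims come from \cite[Proposition~1]{HenrionOutrata2005}; in (b) the paper reruns that proof, noting that the relevant points eventually lie in $\{\bar x\}+\mathbb B_{\varepsilon,\delta}(d)$. This is exactly your projection argument with $\bar x+t_kd$. Both normal-cone estimates are read off from \cite[Theorem~3.1]{BenkoGfrererOutrata2019} together with \eqref{eq:coderivative_vs_subdifferential}, which is your stated fallback.

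\textbf{What your main route does differently.} You derive the inclusion from scratch using only exact penalization, \cref{lem:normals_to_perturbed_feasible_set}, and \cref{lem:robustness_of_subdifferential_scalarization_function}. This makes the argument self-contained at the cost of length. The citation is shorter and in fact delivers a more general coderivative estimate.

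\textbf{The obstacle you describe does not arise.} Ekeland's principle and the control of $\delta^k/t_k$ are unnecessary. Fix $\epsilon_k\downarrow 0$. The feasible point $x^k=\bar x+t_kd^k$ is an \emph{exact} local minimizer of $\phi_k\coloneqq-\langle\eta^k,\cdot\rangle+\epsilon_k\nnorm{\cdot-x^k}$ over $X$. Clarke's exact penalization with constant $\nnorm{\eta^k}+\epsilon_k$, combined with MSCQ$(d)$, shows that $(x^k,0)$ is a local minimizer of $\phi_k(x)+\kappa(\nnorm{\eta^k}+\epsilon_k)\nnorm{\delta}$ over the set $Z$ of \cref{lem:normals_to_perturbed_feasible_set}. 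MSCQ$(d)$ applies here because, for $d\neq 0$, $\{\bar x\}+\mathbb B_{\varepsilon,\delta}(d)$ contains a ball of radius proportional to $t_k$ around $x^k$. The Fermat rule at $(x^k,0)$ then gives $\lambda^k\in N_\Gamma(F(x^k))$ with $\nnorm{\lambda^k}\leq\kappa(\nnorm{\eta^k}+\epsilon_k)$. It also gives $\eta^k+\epsilon_kb^k\in\partial\langle\lambda^k,F\rangle(x^k)$ for some $b^k\in\mathbb B_1(0)$. Your limit passage then goes through verbatim with $\delta^k=0$. If $d\notin\mathbb S$, use that directional limiting normal cones are invariant under positive scaling of the direction.

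\textbf{Two minor points for (a).} Equality of the cones also needs $T_X(\bar x)\subset T^\textup{lin}_{F,\Gamma}(\bar x)$. This holds without any assumption, but you should state it. Also, MSCQ $\Rightarrow$ MSCQ$(d)$ is immediate from $\mathbb B_{\varepsilon,\delta}(d)\subset\mathbb B_\varepsilon(0)$, so \cite[Lemma~2.7]{Gfrerer2014} is not needed.
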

\begin{proof}
	In the first assertion, equivalence of the tangent cone and the linearization cone
	is taken from \cite[Proposition~1]{HenrionOutrata2005}, where the required 
	calmness assumption follows from MSCQ at $\bar x$ as described in 
	\cite[p. 438]{HenrionOutrata2005}.
	The upper estimate for the limiting normal cone
	follows, for example, from \cite[Theorem~3.1]{BenkoGfrererOutrata2019} with $h\coloneqq 0$
	when taking \eqref{eq:coderivative_vs_subdifferential} into account.
	
	Let us take a look at the second assertion.
	We can follow the proof of \cite[Proposition~1]{HenrionOutrata2005},
	wherein $x_l \in \{\bar{x}\} + \mathbb B_{\varepsilon,\delta}(d) $
	can be guaranteed for large enough $l \in\N$, to obtain $d\in T_X(\bar x)$
	via MSCQ$(d)$ at $\bar x$.
	The inclusion for the directional limiting normal cone 
	is taken from \cite[Theorem~3.1]{BenkoGfrererOutrata2019}, 
	see \eqref{eq:coderivative_vs_subdifferential} again.
\end{proof}

As a corollary of the above lemma, we obtain the following well-known optimality conditions.
\begin{corollary}\label{cor:MStat_via_MS}
	Let $\bar x\in\R^n$ be a local minimizer of \eqref{eq:nonsmooth_problem}.
	Then the following assertions hold.
	\begin{enumerate}
		\item If MSCQ holds at $\bar x$, then $\bar x$ is M-stationary.
		\item \label{cor:MStat_via_MS2} Fix $d\in C(\bar x)\cap\mathbb S$.
			If MSCQ$(d)$ holds at $\bar x$,
			then $\bar x$ is M-stationary in direction $d$.
		\item If MSCQ holds at $\bar x$, then $\bar x$ is M-stationary in direction $d$
			for all $d\in C(\bar{x})\cap \mathbb S$.
	\end{enumerate}
\end{corollary}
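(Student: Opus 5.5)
The plan is to combine the dual first-order conditions from \cref{prop:dual_fo_conditions} with the normal-cone estimates of \cref{lem:preimage_rule_dir_MS}, recalling that $X=F^{-1}(\Gamma)$.

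For the first assertion, \cref{prop:dual_fo_conditions}\,\ref{item:dual_fo_cond_regular} gives $-\nabla f(\bar x)\in\widehat N_X(\bar x)$. Since $\widehat N_X(\bar x)\subset N_X(\bar x)$ (take constant sequences in the definition of the limiting normal cone), we get $-\nabla f(\bar x)\in N_X(\bar x)$. Under MSCQ, \cref{lem:preimage_rule_dir_MS}\,\ref{lem:preimage_rule_dir_MS_nondir} then yields some $\lambda\in N_\Gamma(F(\bar x))$ with $-\nabla f(\bar x)\in\partial\langle\lambda,F\rangle(\bar x)$. This is exactly \eqref{eq:Mst}.

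For the second assertion, fix $d\in C(\bar x)\cap\mathbb S$, so that $d\in T^\textup{lin}_{F,\Gamma}(\bar x)$ and $f'(\bar x)d\le 0$.
\begin{enumerate}
\item By \cref{lem:preimage_rule_dir_MS}\,\ref{lem:preimage_rule_dir_MS_dir}, MSCQ$(d)$ gives $d\in T_X(\bar x)$. Hence $d\in\widehat C(\bar x)\cap\mathbb S$.
\item \Cref{prop:dual_fo_conditions}\,\ref{item:dual_fo_cond_direcioanal} then gives $-\nabla f(\bar x)\in N_X(\bar x;d)$.
\item The inclusion for $N_X(\bar x;d)$ in \cref{lem:preimage_rule_dir_MS}\,\ref{lem:preimage_rule_dir_MS_dir} provides $\lambda\in N_\Gamma(F(\bar x);F'(\bar x;d))$ with $-\nabla f(\bar x)\in\partial\langle\lambda,F\rangle(\bar x;d)$, which is \eqref{eq:dir_Mst}.
\end{enumerate}
The only nontrivial point is that $d$ comes from the explicit critical cone $C(\bar x)$, whereas \cref{prop:dual_fo_conditions} requires $d$ in the implicit cone $\widehat C(\bar x)$. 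Step (a) closes this gap, using the tangent-cone part of \cref{lem:preimage_rule_dir_MS}.

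The third assertion follows from the second. As noted after the definition of MSCQ, \cite[Lemma~2.7]{Gfrerer2014} shows that MSCQ at $\bar x$ implies MSCQ$(d)$ at $\bar x$ for every nonzero $d$, in particular for every $d\in C(\bar x)\cap\mathbb S$. Applying the second assertion to each such $d$ finishes the proof.
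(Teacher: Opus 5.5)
Your proof is correct and follows the paper's argument essentially step for step. For (a) you combine \cref{prop:dual_fo_conditions}\,\ref{item:dual_fo_cond_regular} with \cref{lem:preimage_rule_dir_MS}\,\ref{lem:preimage_rule_dir_MS_nondir}; for (b) you use the tangent-cone part of \cref{lem:preimage_rule_dir_MS}\,\ref{lem:preimage_rule_dir_MS_dir} to pass from $C(\bar x)$ to $\widehat C(\bar x)$, exactly as the paper does, and you reduce (c) to (b). The only difference is minor: in (c) you do not need \cite[Lemma~2.7]{Gfrerer2014}, since MSCQ implies MSCQ$(d)$ directly by definition, because $\mathbb B_{\varepsilon,\delta}(d)\subset\mathbb B_\varepsilon(0)=\mathbb B_{\varepsilon,\delta}(0)$.
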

\begin{proof}
	The first assertion follows from 
	\cref{prop:dual_fo_conditions}\,\ref{item:dual_fo_cond_regular}
	and \cref{lem:preimage_rule_dir_MS}\,\ref{lem:preimage_rule_dir_MS_nondir}.
	In order to verify the second assertion,
	we first apply \cref{lem:preimage_rule_dir_MS}\,\ref{lem:preimage_rule_dir_MS_dir} to obtain
	$d\in\widehat C(\bar x)\cap\mathbb S$.
	Now, \cref{prop:dual_fo_conditions}\,\ref{item:dual_fo_cond_direcioanal} 
	can be used to find
	$-\nabla f(\bar x)\in N_X(\bar x;d)$.
	Applying \cref{lem:preimage_rule_dir_MS}\,\ref{lem:preimage_rule_dir_MS_dir} 
	once again yields the claim.
	Finally, for the third assertion, we use that MSCQ at $\bar x$ implies
	MSCQ$(d)$ at $\bar x$ for any direction $d\in\R^n$ by definition.
	Hence, the claim immediately follows from statement~\ref{cor:MStat_via_MS2}.
\end{proof}

Note that results similar to \cref{cor:MStat_via_MS}\,\ref{cor:MStat_via_MS2} 
have been shown, e.g., in 
\cite[Theorem~3.1]{BaiYe2022}
and
\cite[Theorem~7]{Gfrerer2013}.

Subsequently, we present a slightly different way to ensure
directional M-stationarity at local minimizers of \eqref{eq:nonsmooth_problem},
which avoids assuming validity of MSCQ (in critical directions)
and instead relies on a generalized version of Guignard's constraint qualification
tailored to \eqref{eq:nonsmooth_problem}.

\begin{definition}\label{def:GACQ}
	Let $\bar x\in\R^n$ be feasible for \eqref{eq:nonsmooth_problem}.
	\begin{enumerate}
		\item We say that the generalized Abadie constraint qualification (GACQ)
			holds at $\bar x$ if
			\[
				T_X(\bar x)=T^\textup{lin}_{F,\Gamma}(\bar x).
			\]
		\item We say that the generalized Guignard constraint qualification (GGCQ)
			holds at $\bar x$ if
			\[
				\widehat N_X(\bar x)=T^\textup{lin}_{F,\Gamma}(\bar x)^\circ.
			\]				
	\end{enumerate}
\end{definition}

Given any feasible point of \eqref{eq:nonsmooth_problem}, 
validity of GACQ at this point implies validity of GGCQ there.
Moreover, it is apparent from 
\cref{lem:preimage_rule_dir_MS}\,\ref{lem:preimage_rule_dir_MS_nondir} 
that both concepts are implied by MSCQ.

With the aid of GGCQ (and GACQ), non-directional M-stationarity can be ensured
as outlined in the following remark.

\begin{remark}\label{rem:GGCQ_CQ_under_subregularity}
Let $\bar x\in\R^n$ be a local minimizer of \eqref{eq:nonsmooth_problem}
such that $F$ is continuously differentiable around $\bar x$.
If GGCQ holds at $\bar x$ while 
the linearized feasibility mapping 
\begin{equation}\label{eq:linearized_feasibility_mapping}
	u\mapsto \{F'(\bar x)u\}-T_\Gamma(F(\bar x))
\end{equation}
is metrically subregular at $(0,0)$, 
then $\bar x$ is M-stationary, which follows from \cite[Proposition~3]{BenkoGfrerer2017}
in combination with the necessary optimality condition
$-\nabla f(\bar{x}) \in \widehat N_X(\bar{x})$ from 
\cref{prop:dual_fo_conditions}\,\ref{item:dual_fo_cond_regular}.
Both GGCQ at $\bar x$ and the metric subregularity assumption hold under MSCQ at $\bar x$, 
see \cref{lem:preimage_rule_dir_MS}\,\ref{lem:preimage_rule_dir_MS_nondir}
and \cite[Lemma~4]{Gfrerer2019}. Alternatively, the metric subregularity assumption 
is inherently satisfied if $T_\Gamma(F(\bar x))$
is a polyhedral cone (which is always true if $\Gamma$ is polyhedral), as this 
implies the polyhedrality of the above mapping.
\end{remark}

A result related to \cref{rem:GGCQ_CQ_under_subregularity}, which now concerns directional
M-stationarity, is stated below.

\begin{lemma}\label{lem:dir_MSt_under_GACQ}
	Let $\bar x\in\R^n$ be a local minimizer of \eqref{eq:nonsmooth_problem}
	such that $F$ is continuously differentiable around $\bar x$,
	let GGCQ hold at $\bar x$,
	and
	fix a direction $d\in C(\bar x)\cap\mathbb S$.
	Finally, let one of the following conditions hold.
	\begin{enumerate}
	\item\label{item:loc_poly} 
		The set $\Gamma$ is polyhedral locally around $F(\bar x)$.
	\item\label{item:loc_cvx} 
		The set $\Gamma$ is convex locally around $F(\bar x)$,
		and the mapping
		from \eqref{eq:linearized_feasibility_mapping}
		is metrically subregular in direction $d$ at $(0,0)$.
	\end{enumerate}
	Then $\bar x$ is M-stationary in direction $d$.
\end{lemma}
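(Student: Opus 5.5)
The plan is to reduce the claim to a directional stationarity statement for a linearized problem and then transfer it back to \eqref{eq:nonsmooth_problem}. Since $\bar x$ is a local minimizer, \cref{prop:dual_fo_conditions}\,\ref{item:dual_fo_cond_regular} gives $-\nabla f(\bar x)\in\widehat N_X(\bar x)$. GGCQ turns this into $-\nabla f(\bar x)\in T^\textup{lin}_{F,\Gamma}(\bar x)^\circ$. As $F$ is continuously differentiable around $\bar x$, we have $T^\textup{lin}_{F,\Gamma}(\bar x)=\{u\,|\,F'(\bar x)u\in T_\Gamma(F(\bar x))\}=:L$, a closed cone. The statement $-\nabla f(\bar x)\in L^\circ$ says exactly that $u=0$ is a global minimizer of the linearized problem
\[
	\min_u\ \nabla f(\bar x)^\top u\quad\text{s.t.}\quad F'(\bar x)u\in T_\Gamma(F(\bar x)).
\]
Moreover, $d\in C(\bar x)\cap\mathbb S$ means $d\in L$ and $\nabla f(\bar x)^\top d\le 0$, which forces $\nabla f(\bar x)^\top d=0$. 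Since the problem is positively homogeneous, $u=d$ is then also a global minimizer of the linearized problem.

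Next, I apply the (nondirectional) MSCQ-based result \cref{cor:MStat_via_MS} to the linearized problem at the point $d$, with the smooth map $u\mapsto F'(\bar x)u$ and the closed set $T_\Gamma(F(\bar x))$. I need MSCQ for $u\mapsto\{F'(\bar x)u\}-T_\Gamma(F(\bar x))$ at $(d,0)$.

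\begin{itemize}
	\item In case \ref{item:loc_poly}, $T_\Gamma(F(\bar x))$ is polyhedral, so the mapping is polyhedral. It is therefore metrically subregular at every point of its graph by Robinson's result, in particular at $(d,0)$.
	\item In case \ref{item:loc_cvx}, the assumed subregularity at $(0,0)$ in direction $d$ must be converted into subregularity at $(d,0)$. By conicity, the inequality $\dist(u,L)\le\kappa\dist(F'(\bar x)u,T_\Gamma(F(\bar x)))$ scales under $u\mapsto tu$. A neighborhood of $d$ is contained in $\bigcup_{t>0}t\,\mathbb B_{\varepsilon,\delta}(d)$ after rescaling, with the norm constraint $\le\varepsilon$ removable by homogeneity. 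So the directional estimate at the origin yields subregularity at $(d,0)$.
\end{itemize}

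\cref{cor:MStat_via_MS} then yields a $\lambda$ with $-\nabla f(\bar x)=F'(\bar x)^\top\lambda$ and $\lambda\in N_{T_\Gamma(F(\bar x))}(F'(\bar x)d)$.

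The remaining and main step is to show
\[
	N_{T_\Gamma(F(\bar x))}(F'(\bar x)d)\subset N_\Gamma(F(\bar x);F'(\bar x)d).
\]
Combined with $\partial\langle\lambda,F\rangle(\bar x;d)=\{F'(\bar x)^\top\lambda\}$ and $F'(\bar x;d)=F'(\bar x)d$, this gives \eqref{eq:dir_Mst}.

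\begin{itemize}
	\item In case \ref{item:loc_poly}, $\Gamma$ coincides locally with $F(\bar x)+T_\Gamma(F(\bar x))$. For small $t>0$ and $w'$ near $w:=F'(\bar x)d$, the regular normal cones satisfy $\widehat N_\Gamma(F(\bar x)+tw')=\widehat N_{T_\Gamma(F(\bar x))}(w')$. Taking limits then gives equality of the two cones.
	\item In case \ref{item:loc_cvx}, $T_\Gamma(F(\bar x))$ is convex. Its normal cone at $w$ is $N_{T_\Gamma(F(\bar x))}(w)=N_\Gamma(F(\bar x))\cap\{w\}^\perp$, which by \eqref{eq:dir_normal_cone_for_convex} (with equality for convex sets and $w\in T_\Gamma(F(\bar x))$) equals $N_\Gamma(F(\bar x);w)$.
\end{itemize}

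I expect the subtlest point to be the transfer of directional subregularity from the origin to the point $d$ in case \ref{item:loc_cvx}. If that fails, I would instead invoke a directional version of \cite[Proposition~3]{BenkoGfrerer2017} directly.
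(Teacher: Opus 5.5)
Your proof is correct, but it is organized differently from the paper's.

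\textbf{Case (b).} The paper stays at the origin of the linearized problem, with $L\coloneqq T^\textup{lin}_{F,\Gamma}(\bar x)$ and $w\coloneqq F'(\bar x)d$:
\begin{itemize}
\item GGCQ makes $0$ a minimizer, and $d$ is an implicit critical direction there.
\item \cref{prop:dual_fo_conditions}\,(b) gives $-\nabla f(\bar x)\in N_L(0;d)$.
\item The assumed directional subregularity is used as MSCQ$(d)$ at $0$ in \cref{lem:preimage_rule_dir_MS}\,(b).
\item It concludes with $N_{T_\Gamma(F(\bar x))}(0;w)=N_\Gamma(F(\bar x))\cap\{w\}^\perp=N_\Gamma(F(\bar x);w)$.
\end{itemize}
You instead use positive homogeneity twice: to move the minimizer from $0$ to $d$, and to upgrade the directional estimate at $(0,0)$ to ordinary subregularity at $(d,0)$. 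This lets you work only with the nondirectional \cref{cor:MStat_via_MS} and the ordinary cone $N_{T_\Gamma(F(\bar x))}(w)$.

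The transfer step you flagged as delicate is sound. Because $L$ and $T_\Gamma(F(\bar x))$ are cones, both distance functions are positively homogeneous. Hence the estimate on $\mathbb B_{\varepsilon,\delta}(d)$ extends to the whole cone $\{u\neq 0\,|\,\nnorm{u/\nnorm{u}-d}\leq\delta\}$, and this cone contains a ball around $d$. Your fallback is therefore unnecessary.

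\textbf{Case (a).} Here the difference is larger. The paper simply cites \cite[Theorem~3.9]{Gfrerer2014}. You give a self-contained argument with two ingredients:
\begin{itemize}
\item Robinson's theorem for the polyhedral linearized mapping, which supplies subregularity at $(d,0)$.
\item The local identity $\Gamma\cap V=(F(\bar x)+T_\Gamma(F(\bar x)))\cap V$ together with the scaling invariance of regular normals to a cone. This yields $N_{T_\Gamma(F(\bar x))}(w)=N_\Gamma(F(\bar x);w)$.
\end{itemize}

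\textbf{Comparison.} Your route treats both cases with the same linearization mechanism. The two cases differ only in where subregularity comes from and how the final normal cone is identified, and no external disjunctive-programming result is needed. The paper's route is shorter and exercises the directional machinery of \cref{sec:dir_stat} directly, without the homogeneity bookkeeping. The price is that case (a) is outsourced to the literature.
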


\begin{proof}
	First, consider the additional assumptions in~\ref{item:loc_poly}.
	Then the assertion follows from \cite[Theorem~3.9]{Gfrerer2014}
	while noting that polyhedrality of $\Gamma$ locally around
	$F(\bar x)$ is enough to run the stated argumentation.
	
	Next, assume that we are given the additional assumptions in~\ref{item:loc_cvx}.
	As $-\nabla f(\bar{x}) \in \widehat{N}_X(\bar x)$ always provides a necessary
	optimality condition, see \cref{prop:dual_fo_conditions}\,\ref{item:dual_fo_cond_regular},
	validity of GGCQ at $\bar x$ guarantees that $0$ is a minimizer of
	\[
		\min\limits_u \quad f'(\bar x)u	 
		\quad\textup{s.t.}\quad	
		F'(\bar x)u\in T_\Gamma(F(\bar x)).
	\]
	As the preimage $F'(\bar x)^{-1}T_\Gamma(F(\bar x))$ is a closed cone,
	we obtain
	\[
		T_{F'(\bar x)^{-1}T_\Gamma(F(\bar x))}(0)
		=
		F'(\bar x)^{-1}T_\Gamma(F(\bar x))
		=
		T^\textup{lin}_{F,\Gamma}(\bar x),
	\]
	such that $d$ is an associated (implicit) critical direction for the above problem at $0$.
	Hence, \cref{prop:dual_fo_conditions}\,\ref{item:dual_fo_cond_direcioanal} yields
	$-\nabla f(\bar x)\in N_{F'(\bar x)^{-1}T_\Gamma(F(\bar x))}(0;d)$.
	The imposed directional metric subregularity assumption corresponds to MSCQ($d$) at 0 for the
	above problem, which allows us to
	utilize \cref{lem:preimage_rule_dir_MS}\,\ref{lem:preimage_rule_dir_MS_dir} 
	to obtain
	$-\nabla f(\bar x)\in F'(\bar x)^\top N_{T_\Gamma(F(\bar x))}(0;F'(\bar x)d)$.
	As we have $d\in C(\bar{x})$ while $T_\Gamma(F(\bar x))$ is convex and 
	$\Gamma$ is locally convex around $\bar x$, we can apply the equality version of
	\eqref{eq:dir_normal_cone_for_convex} twice to find
	\begin{align*}
		N_{T_\Gamma(F(\bar x))}(0;F'(\bar x)d)
		&=
		N_{T_\Gamma(F(\bar x))}(0)\cap\{F'(\bar x)d\}^\perp
		\\
		&=
		N_\Gamma(F(\bar x))\cap\{F'(\bar x)d\}^\perp
		=
		N_\Gamma(F(\bar x);F'(\bar x)d).
	\end{align*}
	Hence, the proof is complete.
\end{proof}

As a corollary,
we obtain the following result, which complements
\cite[Corollary~1]{BenkoGfrerer2017} and \cite[Theorem~7]{FlegelKanzowOutrata2007}
and follows from \cite[Theorem~3.9]{Gfrerer2014}.

\begin{corollary}\label{cor:dir_MSt_disjunctive}
	Let $\bar x\in\R^n$ be a local minimizer of \eqref{eq:nonsmooth_problem}
	such that $F$ is continuously differentiable around $\bar x$,
	let GGCQ hold at $\bar x$,
	and
	let $\Gamma$ be polyhedral locally around $F(\bar x)$.
	Then $\bar x$ is M-stationary in direction $d$ for all
	$d\in C(\bar x)\cap\mathbb S$.
\end{corollary}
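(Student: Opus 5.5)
This corollary is an immediate consequence of \cref{lem:dir_MSt_under_GACQ}. The plan is to check that, for each fixed direction, all hypotheses of that lemma hold, using its alternative \ref{item:loc_poly}.

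First I fix an arbitrary $d\in C(\bar x)\cap\mathbb S$. The standing assumptions of \cref{lem:dir_MSt_under_GACQ} are then satisfied verbatim:
\begin{enumerate}
\item $\bar x$ is a local minimizer of \eqref{eq:nonsmooth_problem};
\item $F$ is continuously differentiable around $\bar x$;
\item GGCQ holds at $\bar x$;
\item $d\in C(\bar x)\cap\mathbb S$.
\end{enumerate}
Next, the local polyhedrality of $\Gamma$ around $F(\bar x)$ is exactly condition \ref{item:loc_poly} of the lemma. Hence the lemma yields that $\bar x$ is M-stationary in direction $d$. Since $d$ was arbitrary, the conclusion holds for every $d\in C(\bar x)\cap\mathbb S$.

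Nothing in this argument is an obstacle, because the real work lies in \cref{lem:dir_MSt_under_GACQ}\,\ref{item:loc_poly}, which rests on \cite[Theorem~3.9]{Gfrerer2014}. The one point worth stating explicitly is that the hypotheses of the lemma (GGCQ and local polyhedrality of $\Gamma$) do not depend on $d$. So they need to be checked only once, and the directionwise conclusion extends to the whole critical set with no uniformity issues.

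As an alternative route through \ref{item:loc_cvx}, which I would not follow because it does not cover nonconvex polyhedral $\Gamma$, one could note that the linearized mapping \eqref{eq:linearized_feasibility_mapping} is polyhedral whenever $T_\Gamma(F(\bar x))$ is polyhedral. It is therefore metrically subregular by \cite[Proposition~1]{Robinson1981}, and in particular subregular in direction $d$.
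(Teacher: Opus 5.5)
Your proposal is correct and follows the paper's route. The paper also presents this result as an immediate consequence of \cref{lem:dir_MSt_under_GACQ} under alternative \ref{item:loc_poly}, applied to each fixed $d\in C(\bar x)\cap\mathbb S$, with the substance resting on \cite[Theorem~3.9]{Gfrerer2014}. Your side remark is also accurate: alternative \ref{item:loc_cvx} would not suffice, since polyhedrality here allows nonconvex unions of convex polyhedra.
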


The subsequently stated example illustrates that even GACQ on its own
does not serve as a constraint qualification for \eqref{eq:nonsmooth_problem} in general.

\begin{example}\label{ex:SOC_example}
	Consider the linear second-order cone problem
	\[
		\min\limits_x\quad x_1\quad\textup{s.t.}\quad 
		F(x)\coloneqq(x_1,x_2,x_2)\in \Gamma \coloneqq \{y\in\R^3\,|\,(y_1^2+y_2^2)^{1/2}\leq y_3\}.
	\]
	Its feasible set is $X= \{ 0 \} \times \R_+$, and
	we consider the global minimizer $\bar x\coloneqq(0,0)$.
	As $F$ is linear while $\Gamma$ is a cone,
	we find
	\[
		T_X(\bar x) 
		= 
		T^\textup{lin}_{F,\Gamma}(\bar x) 
		=
		X, 
	\]
	and, hence, GACQ is valid at $\bar x$.
	Furthermore, we note
	$
		\widehat C(\bar x)
		=
		C(\bar x)
		=
		X.
	$
	
	Observing that
	\begin{align*}
		F'(\bar x)^\top N_\Gamma(F(\bar x))
		=
		F'(\bar x)^\top\Gamma^\circ
		&=
		\{
			(\lambda_1,\lambda_2+\lambda_3)\in\R^2\,|\,
			(\lambda_1^2+\lambda_2^2)^{1/2}\leq -\lambda_3
		\}
	\end{align*}
	is valid, M-stationarity of $\bar x$ requires $(-1,0)\in F'(\bar x)^\top N_\Gamma(F(\bar x))$, i.e.,
	the existence of $\lambda\in\R^3$ such that
	\[
		\lambda_1=-1,
		\quad
		\lambda_2=-\lambda_3,
		\quad
		(\lambda_1^2+\lambda_2^2)^{1/2}\leq-\lambda_3.
	\]
	The third condition particularly requires $\lambda_2 = -\lambda_3 \geq 0$, 
	such that inserting the first two conditions into the third one and
	taking squares afterwards leads to 
	$1+\lambda_2^2\leq\lambda_2^2$, which cannot hold for any $\lambda_2 \geq 0$.
	Hence, $\bar x$ is not M-stationary and, thus,
	also not M-stationary in the uniquely determined direction $d\coloneqq(0,1)$
	from $\widehat C(\bar x)\cap\mathbb S = C(\bar x)\cap\mathbb S$.
	
	Particularly, this example shows that GACQ is, in general, 
	not a constraint qualification that ensures (directional) M-stationarity of local minimizers
	in the absence of additional assumptions.
	In fact, $\Gamma$ is not polyhedral locally around $F(\bar x)$, 
	and, according to \cref{lem:dir_MSt_under_GACQ},
	the linearized feasibility mapping \eqref{eq:linearized_feasibility_mapping}
	cannot be metrically subregular at $(0,0)$ (in direction $d$).
\end{example}

As mentioned earlier, GGCQ is implied by MSCQ, and the following example shows that this implication is strict.
Let us note that, according to \cite[Example~1]{HenrionOutrata2005}, 
even GACQ does not imply MSCQ, so the above is actually well known.
However, our example especially illustrates that GGCQ is of particular use to infer
M-stationarity of a local minimizer in a direction $d\in C(\bar x)\cap\mathbb{S}$ in situations where 
even MSCQ($d$) is violated.

\begin{example}\label{ex:GGCQ_without_MSCQ}
	Consider the optimization problem
	\[
		\min\limits_x\quad x_1\quad\textup{s.t.}\quad 
		F(x)\coloneqq (-x_1,-x_2,x_1x_2)\in \Gamma \coloneqq \R_- \times \R_- \times \{ 0 \}
	\]
	with the feasible set $X= (\R_+ \times \{ 0 \}) \cup (\{ 0 \} \times \R_+)$ and global 
	minimizer $\bar x \coloneqq (0,0)$. We obtain $T_X(\bar x) = X$,
	$T^\textup{lin}_{F,\Gamma}(\bar x) = \R_+ \times \R_+$, and
	\[
		\widehat N_X(\bar x) = \R_- \times \R_- = T^\textup{lin}_{F,\Gamma}(\bar x)^\circ,
	\]
	which shows that GGCQ is fulfilled at $\bar x$. Moreover, we find 
	$\widehat C(\bar x) = C(\bar x) = \{ 0 \} \times \R_+$ and, thus, 
	may consider direction
	$d\coloneqq (0,1) \in C(\bar x) \cap \mathbb{S}$. 
	
	To verify M-stationarity in direction $d$ at $\bar x$, we need to prove 
	\begin{align*}
		(-1,0) 
		\in 
		F'(\bar{x})^\top N_\Gamma(F(\bar x);F'(\bar x)d) 
		&= 
		F'(\bar{x})^\top N_\Gamma(F(\bar x); (0,-1,0))
		\\
		&=
		\{ (-\lambda_1,-\lambda_2)\in \R^2 \mid \lambda_1\in\R_+, \lambda_2 = 0, \lambda_3\in\R\},
	\end{align*}
	which clearly holds using the multiplier $\lambda \coloneqq (1,0,0)$.
	
	Concerning MSCQ$(d)$ at $\bar x$, notice that, for any fixed $\varepsilon>0$ and $\delta>0$, 
	the point
	$x^k \coloneqq (1/k^2,1/k)$ fulfills $x^k\in \{ \bar x\} + \mathbb B_{\varepsilon,\delta}(d)$ 
	for $k\in\N$ sufficiently large. Moreover, we calculate
	\[
		\dist(x^k,F^{-1}(\Gamma)) = \dist(x^k,X) = \frac{1}{k^2}, 
		\quad
		\dist (F(x^k), \Gamma) = x_1^k x_2^k = \frac{1}{k^3},
	\]
	such that MSCQ$(d)$ at $\bar x$ requires 
	\[
		\frac{1}{k^2} \leq \kappa \frac{1}{k^3}
	\]
	for all $k\in\N$ sufficiently large, which can clearly not be fulfilled for any fixed $\kappa>0$.
	Thus, MSCQ$(d)$ does not hold at $\bar x$, such that MSCQ at $\bar x$ is neither fulfilled.
	Hence, noting the continuous differentiability of $F$ and the polyhedrality of $\Gamma$, 
	(directional) M-stationarity of $\bar x$ as verified above can only be inferred using 
	\cref{cor:dir_MSt_disjunctive}, but not \cref{cor:MStat_via_MS}.
\end{example}

\subsection{Approximate directional stationarity conditions}\label{sec:approx_dir_stat}

Several approximate variants of stationarity conditions are studied in the literature, as they
may be used as necessary optimality conditions that do not require a qualification condition to hold,
see, for example, \cite{KaemingMehlitz2025,Mehlitz2020b,Mehlitz2023,MovahedianPourahmad2024}.
Thus, let us now introduce an approximate variant of M-stationarity for \eqref{eq:nonsmooth_problem}.

\begin{definition}\label{def:approx_stat_sequence_non_dir}
	Let $\bar x\in\R^n$ be feasible for \eqref{eq:nonsmooth_problem}.
	A sequence $\{(x^k,\lambda^k,\delta^k,\varepsilon^k)\}_{k=1}^\infty\subset\R^{n+\ell+\ell+n}$
	satisfying
	\begin{equation}\label{eq:approximate_statonarity}
		\varepsilon^k-\nabla f(x^k)
		\in  \partial\langle\lambda^k,F\rangle(x^k),
		\qquad
		\lambda^k\in N_\Gamma(F(x^k)-\delta^k)
	\end{equation}
	for each $k\in\N$ as well as the convergences
	\begin{equation}\label{eq:sequences_conv}
		(x^k, \delta^k, \varepsilon^k) \to (\bar x, 0, 0) 
	\end{equation}
	is called an approximately M-stationary (AM-stationary) sequence w.r.t.\ $\bar x$.
	If such a sequence exists, 
	$\bar x$ is referred to as AM-stationary.
\end{definition}

Note that the above definition could likewise be stated using the regular 
counterparts of the limiting normal cone and the limiting subdifferential. However,
to enable an easier comparison with available results 
in the literature, such as \cite{BenkoMehlitz2024b},
and to avoid additional technicalities in the following, 
we only consider the limiting normal cone and the limiting subdifferential in 
the course of this paper.

Our interest in AM-stationarity is motivated by the following result,
which follows, e.g., from \cite[Theorem~8]{MovahedianPourahmad2024}.
\begin{lemma}\label{lem:approx_MSt_NC}
	Let $\bar x\in\R^n$ be a local minimizer of \eqref{eq:nonsmooth_problem}.
	Then $\bar x$ is AM-stationary.
\end{lemma}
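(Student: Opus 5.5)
We prove that a local minimizer $\bar x$ of \eqref{eq:nonsmooth_problem} is AM-stationary with a quadratic-penalty (decoupling) argument. This localizes the problem and moves the constraint into a perturbed, unconstrained, or simply constrained subproblem, whose first-order conditions yield the sequence in \eqref{eq:approximate_statonarity}.

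Choose $\varepsilon>0$ such that $f(\bar x)\le f(x)$ for all $x\in X\cap\mathbb B_\varepsilon(\bar x)$. For $k\in\N$, consider the auxiliary problem in the variables $(x,\delta)\in\R^{n+\ell}$
\[
	\min\limits_{x,\delta}\quad f(x)+\tfrac{k}{2}\norm{\delta}^2+\tfrac12\norm{x-\bar x}^2
	\quad\textup{s.t.}\quad
	(x,\delta)\in Z,\ x\in\mathbb B_\varepsilon(\bar x),
\]
where $Z=\{(x,\delta)\,|\,F(x)-\delta\in\Gamma\}$ is the set from \cref{lem:normals_to_perturbed_feasible_set}. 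The feasible set is closed and nonempty, since $(\bar x,0)$ belongs to it. Coercivity in $\delta$ and compactness in $x$ then give a global minimizer $(x^k,\delta^k)$.

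Comparing with $(\bar x,0)$ yields $f(x^k)+\tfrac k2\nnorm{\delta^k}^2+\tfrac12\nnorm{x^k-\bar x}^2\le f(\bar x)$. Since $f$ is bounded on the ball, $\delta^k\to0$. Every accumulation point $\tilde x$ of $\{x^k\}$ satisfies $F(\tilde x)\in\Gamma$ because $\Gamma$ is closed and $F$ is continuous, and $f(\tilde x)+\tfrac12\nnorm{\tilde x-\bar x}^2\le f(\bar x)$ holds. Local optimality of $\bar x$ then forces $\tilde x=\bar x$, so $x^k\to\bar x$. For large $k$, $x^k$ therefore lies in the interior of the ball, and the ball constraint becomes inactive.

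For such $k$, the point $(x^k,\delta^k)$ is a local minimizer of a smooth function over the closed set $Z$. \Cref{prop:dual_fo_conditions}\,\ref{item:dual_fo_cond_regular} gives
\[
	-(\nabla f(x^k)+(x^k-\bar x),\,k\delta^k)\in\widehat N_Z((x^k,\delta^k))\subset N_Z((x^k,\delta^k)).
\]
By \cref{lem:normals_to_perturbed_feasible_set}, there is $\lambda^k\in N_\Gamma(F(x^k)-\delta^k)$ with $\lambda^k=k\delta^k$ and
\[
	-\nabla f(x^k)-(x^k-\bar x)\in\partial\langle\lambda^k,F\rangle(x^k).
\]
Setting $\varepsilon^k\coloneqq-(x^k-\bar x)\to0$ gives \eqref{eq:approximate_statonarity} and \eqref{eq:sequences_conv}.

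The main technical obstacle is the normal-cone estimate for $Z$. That estimate is exactly \cref{lem:normals_to_perturbed_feasible_set}, so the remaining step needing care is the convergence $x^k\to\bar x$. The quadratic term $\tfrac12\norm{x-\bar x}^2$ handles it by making $\bar x$ a strict local minimizer of the perturbed objective.
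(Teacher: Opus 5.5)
Your proof is correct and follows the paper's approach. The paper proves this lemma only by citing \cite[Theorem~8]{MovahedianPourahmad2024}, but the first part of the proof of \cref{thm:locmin_dir_akkt} runs exactly your penalty scheme \eqref{eq:locmin_prob}, combined with \cref{prop:dual_fo_conditions}\,\ref{item:dual_fo_cond_regular} and \cref{lem:normals_to_perturbed_feasible_set}. The one cosmetic difference is that the paper adds the constraint $\delta\in\mathbb B_1(0)$ to make the feasible set compact, whereas you get minimizers from coercivity in $\delta$, which works equally well.
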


Using the subsequent result, a connection between AM-stationarity
and M-stationarity can be established.

\begin{lemma}\label{lem:taking_the_limit_in_approximate_conditions}
	Let $\bar x\in\R^n$ be an AM-stationary point of \eqref{eq:nonsmooth_problem}.
	Let $\{(x^k,\lambda^k,\delta^k,\varepsilon^k)\}_{k=1}^\infty\subset\R^{n+\ell+\ell+n}$
	be an AM-stationary sequence w.r.t.\ $\bar x$.
	If $\{\lambda^k\}_{k=1}^\infty$ is bounded, then $\bar x$ is M-stationary.
\end{lemma}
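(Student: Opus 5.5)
The plan is to pass to a convergent subsequence of the multipliers and then take limits in both relations of \eqref{eq:approximate_statonarity}. Each limit is justified by a robustness property recalled earlier.

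Since $\{\lambda^k\}_{k=1}^\infty$ is bounded, we can pick an infinite index set $K\subset\N$ and some $\lambda\in\R^\ell$ such that $\lambda^k\to_K\lambda$. Set $\xi^k\coloneqq\varepsilon^k-\nabla f(x^k)$. Because $f$ is continuously differentiable and $(x^k,\varepsilon^k)\to(\bar x,0)$ by \eqref{eq:sequences_conv}, we get $\xi^k\to-\nabla f(\bar x)$. Now $\xi^k\in\partial\langle\lambda^k,F\rangle(x^k)$ holds for all $k\in K$, with $x^k\to_K\bar x$ and $\lambda^k\to_K\lambda$. The first assertion of \cref{lem:robustness_of_subdifferential_scalarization_function} therefore gives $-\nabla f(\bar x)\in\partial\langle\lambda,F\rangle(\bar x)$, which is \eqref{eq:Mst_x}.

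For \eqref{eq:Mst_lambda}, note that $F(x^k)-\delta^k\in\Gamma$ for all $k$. Otherwise $N_\Gamma(F(x^k)-\delta^k)$ would be empty by convention, contradicting $\lambda^k\in N_\Gamma(F(x^k)-\delta^k)$. Since $F$ is continuous and $\delta^k\to0$, we have $F(x^k)-\delta^k\to F(\bar x)$. Moreover $F(\bar x)\in\Gamma$ holds by feasibility, and also by closedness of $\Gamma$. Robustness of the limiting normal cone, applied along $K$, then yields $\lambda\in N_\Gamma(F(\bar x))$. Hence $\lambda$ solves \eqref{eq:Mst}, and $\bar x$ is M-stationary.

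I do not expect a real obstacle here. The only points that need care are the convention that normal cones are empty outside the set, which ensures feasibility of the perturbed points $F(x^k)-\delta^k$, and the fact that the outer semicontinuity statement for the scalarized subdifferential requires $\lambda^k$ to vary as well. The latter is exactly what \cref{lem:robustness_of_subdifferential_scalarization_function} provides, through \eqref{eq:scalarization_rule} and the graph of $F$.
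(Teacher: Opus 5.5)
Your proposal is correct and follows essentially the same route as the paper: you extract a convergent subsequence of the bounded multipliers, then pass to the limit in \eqref{eq:approximate_statonarity}, using \cref{lem:robustness_of_subdifferential_scalarization_function} for the subdifferential inclusion and robustness of the limiting normal cone for the multiplier inclusion. Your remark that $F(x^k)-\delta^k\in\Gamma$ holds only makes explicit a detail the paper leaves implicit.
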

\begin{proof}
	As $\{\lambda^k\}_{k=1}^\infty$ is bounded,
	we may pick a subsequence (without relabeling) such that $\lambda^k\to\bar\lambda$
	holds for some $\bar\lambda\in\R^\ell$.
	Then, due to \eqref{eq:sequences_conv}, \cref{lem:robustness_of_subdifferential_scalarization_function}, 
	and the robustness of the limiting normal cone, we may take the 
	limit $k\to\infty$ in \eqref{eq:approximate_statonarity}
	in order to find that $\bar x$ is M-stationary with multiplier $\bar\lambda$. 
\end{proof}

Owing to \cref{lem:approx_MSt_NC} and the fact that M-stationarity follows for local minimizers
of \eqref{eq:nonsmooth_problem} only in the presence of a qualification condition,
we observe that, for an AM-stationary sequence $\{(x^k,\lambda^k,\delta^k,\varepsilon^k)\}_{k=1}^\infty\subset\R^{n+\ell+\ell+n}$ 
w.r.t.\ a given point, the sequence $\{\lambda^k\}_{k=1}^\infty$ 
is not bounded in general.

Motivated by the recent study \cite{BenkoMehlitz2024b},
we now introduce a \emph{directional} version of
AM-stationarity.

\begin{definition}\label{def:approx_stat_sequence_dir}
	Let $\bar x\in\R^n$ be feasible for \eqref{eq:nonsmooth_problem},
	and let $d\in T^\textup{lin}_{F,\Gamma}(\bar x)\cap\mathbb S$
	be chosen arbitrarily.
	A sequence $\{(x^k,\lambda^k,\delta^k,\varepsilon^k)\}_{k=1}^\infty\subset\R^{n+\ell+\ell+n}$
	that is AM-stationary w.r.t.\ $\bar{x}$ and additionally fulfills
	$x^k\neq\bar x$ for all $k\in\N$ and 
	\begin{subequations}\label{eq:approx_stat_dir}
		\begin{align}
			\label{eq:sequences_dir_conv}
			&
			\frac{x^k-\bar x}{\nnorm{x^k-\bar x}}\to d,
			\quad 
			\frac{\delta^k}{\nnorm{x^k-\bar x}}\to 0,
			\\
			\label{eq:perturbation_vs_multiplier}
			&
			\nnorm{\delta^k}\lambda^k-\nnorm{\lambda^k}\delta^k\in\oo(\nnorm{\delta^k}\nnorm{\lambda^k}),
			\\
			\label{eq:multipliers_not_arbitrarily_unbounded}
			&
			\left\{
				\frac{\nnorm{\delta^k}\nnorm{\lambda^k}}{\nnorm{x^k-\bar x}}
			\right\}_{k=1}^\infty\text{ bounded}
		\end{align}
	\end{subequations}
	is called an AM-stationary sequence w.r.t.\ $\bar x$ in direction $d$.
	If such a sequence exists,
	$\bar x$ is referred to as AM-stationary in direction $d$.
\end{definition}

Observe that \eqref{eq:approximate_statonarity} implicitly requires 
$F(x^k)-\delta^k\in\Gamma$, i.e.,
\begin{equation}\label{eq:suitable_repr_of_constraint_viol}	
	F(\bar x) 
	+ 
	\nnorm{x^k-\bar x}
		\left(
			\frac{F(x^k)-F(\bar x)}{\nnorm{x^k-\bar x}}
			-
			\frac{\delta^k}{\nnorm{x^k-\bar x}}
		\right)
	=
	F(x^k)-\delta^k
	\in 
	\Gamma
\end{equation}
for each $k\in\N$. Thus, the directional convergences from
\eqref{eq:sequences_dir_conv} as well as the definition of the tangent cone
automatically yield $F'(\bar x;d)\in T_\Gamma(F(\bar x))$, i.e., 
$d\in T^\textup{lin}_{F,\Gamma}(\bar x)$, which
explains the choice of the direction in \cref{def:approx_stat_sequence_dir}.
Properties \eqref{eq:perturbation_vs_multiplier} and \eqref{eq:multipliers_not_arbitrarily_unbounded} 
provide some detailed
information about the behavior of the involved perturbations $\{\delta^k\}_{k=1}^\infty\subset\R^\ell$
and multipliers $\{\lambda^k\}_{k=1}^\infty\subset\R^\ell$.
For example, in the nontrivial case where $\delta^k$ and $\lambda^k$ do not vanish for all $k\in\N$,
condition \eqref{eq:perturbation_vs_multiplier} translates into
\begin{equation}\label{eq:residual_va_multiplier_nontrivial}
	\frac{\delta^k}{\nnorm{\delta^k}}-\frac{\lambda^k}{\nnorm{\lambda^k}} \to 0.
\end{equation}
Furthermore, \eqref{eq:multipliers_not_arbitrarily_unbounded}
implies that $\{\nnorm{\delta^k}\nnorm{\lambda^k}\}_{k=1}^\infty$ 
(and, thus, $\{(\lambda^k)^\top\delta^k\}_{k=1}^\infty$) is a null sequence
that converges at least as fast to zero as $\{\nnorm{x^k-\bar x}\}_{k=1}^\infty$.
Particularly, $\{\lambda^k\}_{k=1}^\infty$ may diverge 
(as $\{\delta^k\}_{k=1}^\infty$ is a null sequence),
but the speed of divergence is not arbitrary.

With the following result, we are able to refine \cref{lem:taking_the_limit_in_approximate_conditions} by
incorporating directional information.

\begin{lemma}\label{lem:taking_the_limit_in_directional_approximate_conditions}
	Let $\bar x\in\R^n$ be feasible for \eqref{eq:nonsmooth_problem},
	and let $d\in C(\bar x)\cap\mathbb S$
	be chosen arbitrarily.
	Let $\{(x^k,\lambda^k,\delta^k,\varepsilon^k)\}_{k=1}^\infty\subset\R^{n+\ell+\ell+n}$
	be an AM-stationary sequence w.r.t.\ $\bar x$ in direction $d$.
	If $\{\lambda^k\}_{k=1}^\infty$ is bounded, then $\bar x$ is M-stationary in direction $d$.
\end{lemma}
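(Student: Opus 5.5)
We mimic the proof of \cref{lem:taking_the_limit_in_approximate_conditions} and replace each robustness argument by its directional version. Since $\{\lambda^k\}_{k=1}^\infty$ is bounded, we pass to a subsequence (without relabeling) along which $\lambda^k\to\bar\lambda$ for some $\bar\lambda\in\R^\ell$. The task is then to show that $\bar\lambda$ solves \eqref{eq:dir_Mst}.

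First, the $x$-part. For every $k$ we have $\varepsilon^k-\nabla f(x^k)\in\partial\langle\lambda^k,F\rangle(x^k)$, with $x^k\neq\bar x$ and $(x^k-\bar x)/\nnorm{x^k-\bar x}\to d$. Continuous differentiability of $f$ together with $\varepsilon^k\to 0$ gives $\varepsilon^k-\nabla f(x^k)\to-\nabla f(\bar x)$. The directional part of \cref{lem:robustness_of_subdifferential_scalarization_function} therefore yields $-\nabla f(\bar x)\in\partial\langle\bar\lambda,F\rangle(\bar x;d)$, which is \eqref{eq:dir_Mst_x}.

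Second, the multiplier part. Set $t_k\coloneqq\nnorm{x^k-\bar x}\downarrow 0$ and
\[
	w^k\coloneqq\frac{F(x^k)-F(\bar x)}{t_k}-\frac{\delta^k}{t_k}.
\]
By \eqref{eq:suitable_repr_of_constraint_viol} we have $F(\bar x)+t_kw^k=F(x^k)-\delta^k\in\Gamma$ and $\lambda^k\in N_\Gamma(F(\bar x)+t_kw^k)$.

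The main step is to show $w^k\to F'(\bar x;d)$. Write $x^k=\bar x+t_kd^k$ with $d^k\coloneqq(x^k-\bar x)/t_k\to d$. The remark in the preliminaries states that for locally Lipschitz, directionally differentiable $F$ the limit over $t\downarrow0$, $d'\to d$ exists and equals $F'(\bar x;d)$. Hence $(F(\bar x+t_kd^k)-F(\bar x))/t_k\to F'(\bar x;d)$. Moreover $\delta^k/t_k\to 0$ by \eqref{eq:sequences_dir_conv}, so indeed $w^k\to F'(\bar x;d)$.

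Now apply the robustness of the directional limiting normal cone stated after its definition, which is formulated with limiting normals $\eta^k\in N_\Omega(\bar x+t_kd^k)$. Taking $\Omega\coloneqq\Gamma$, base point $F(\bar x)$, directions $w^k$, and $\eta^k\coloneqq\lambda^k\to\bar\lambda$, it gives $\bar\lambda\in N_\Gamma(F(\bar x);F'(\bar x;d))$, which is \eqref{eq:dir_Mst_lambda}.

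Since $d\in C(\bar x)$ was assumed, \cref{def:dir_MStat}\,\ref{item:dir_MStat} applies, and $\bar x$ is M-stationary in direction $d$. Conditions \eqref{eq:perturbation_vs_multiplier} and \eqref{eq:multipliers_not_arbitrarily_unbounded} are not needed here, since boundedness of $\{\lambda^k\}_{k=1}^\infty$ is assumed outright. The only delicate point is the convergence $w^k\to F'(\bar x;d)$, and it is settled by the uniform directional differentiability remark combined with $\delta^k/t_k\to 0$.
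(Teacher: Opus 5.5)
Your proposal is correct and follows essentially the same route as the paper: extract a convergent subsequence $\lambda^k\to\bar\lambda$, then get the subdifferential inclusion from the directional part of \cref{lem:robustness_of_subdifferential_scalarization_function}, and the multiplier inclusion from \eqref{eq:suitable_repr_of_constraint_viol}, $\delta^k/\nnorm{x^k-\bar x}\to 0$, and the robustness of the directional limiting normal cone. The only difference is that you spell out the convergence $w^k\to F'(\bar x;d)$ explicitly, whereas the paper leaves it implicit.
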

\begin{proof}
	Due to boundedness of $\{\lambda^k\}_{k=1}^\infty$, we can take a subsequence (without relabeling)
	such that $\lambda^k \to \bar\lambda$ holds for some $\bar{\lambda} \in\R^\ell$.
	With \eqref{eq:sequences_conv}, \eqref{eq:suitable_repr_of_constraint_viol},
	\cref{lem:robustness_of_subdifferential_scalarization_function}, and the 
	robustness of the directional limiting normal cone, taking the limit 
	$k\to\infty$ in \eqref{eq:approximate_statonarity} while respecting the
	directional convergences \eqref{eq:sequences_dir_conv} yields
	that $\bar{x}$ is M-stationary in direction $d$ with multiplier $\bar \lambda$.
\end{proof}

Let us note that,
in the setting of \cref{lem:taking_the_limit_in_directional_approximate_conditions},
boundedness of $\{\lambda^k\}_{k=1}^\infty$ is inherent whenever FOSCMS$(d)$ is valid at $\bar x$.
Indeed, this can easily be shown via a contradiction argument 
and \cref{lem:robustness_of_subdifferential_scalarization_function}.
In contrast, one should observe that validity of MSCQ$(d)$ at $\bar x$ is in general
not enough to guarantee boundedness of $\{\lambda^k\}_{k=1}^\infty$, even if $\bar{x}$ 
is a local minimizer.
\begin{example}\label{ex:mscq_not_enough_for_boundedness}
	Let us consider the optimization problem
	\[
		\min\limits_x\quad x_1\quad\textup{s.t.}\quad F(x)\coloneqq(x_1,-x_1,x_1+x_2) \in \Gamma \coloneqq \R_- \times \R_- \times \R_-.
	\]	
	We note that $\bar x\coloneqq (0,0)$ is one of its minimizers, and
	we have
	\[
		T^\textup{lin}_{F,\Gamma}(\bar x)=C(\bar x)=T_X(\bar{x})=\widehat C(\bar x)=X=\{0\}\times\R_-.
	\]
	Choosing $\{(x^k,\lambda^k,\delta^k,\varepsilon^k)\}_{k=1}^\infty\subset\R^{2+3+3+2}$ according to
	\[
		x^k\coloneqq\left(0,-\frac1k\right),
		\quad
		\lambda^k\coloneqq (k,k+1,0),
		\quad
		\delta^k\coloneqq (0,0,0),
		\quad
		\varepsilon^k\coloneqq (0,0),
		\qquad
		\forall k\in\N
	\] 
	confirms that $\bar x$ is AM-stationary in direction $d\coloneqq(0,-1)\in C(\bar x)\cap\mathbb S$.
	As we are considering linear inequality constraints, 
	the associated feasibility mapping is polyhedral,
	which ensures its metric subregularity at each point of its graph
	and, hence, (directional) MSCQ at $\bar{x}$.
	Thus, $\bar x$ is M-stationary (in direction $d$) by \cref{cor:MStat_via_MS}.
	However, $\{\lambda^k\}_{k=1}^\infty$ is unbounded.
	Note that FOSCMS$(d)$ at $\bar{x}$ reduces to
	\[
		(0,0) 
		= 
		(1,0)\lambda_1 + (-1,0)\lambda_2 + (1,1)\lambda_3,\quad
		\lambda_1,\lambda_2\geq 0,\,\lambda_3=0
		\quad
		\Longrightarrow
		\quad
		\lambda = 0,
	\]	
	see also \eqref{eq:dir_normal_cone_for_convex}, 
	and is indeed clearly violated.
\end{example}

Given a local minimizer $\bar{x}\in\R^n$ of \eqref{eq:nonsmooth_problem}, 
we have seen that MSCQ$(d)$ at $\bar{x}$ for a critical direction $d\in C(\bar x)\cap\mathbb S$ 
does not guarantee boundedness of $\{ \lambda^k \}_{k=1}^\infty$ for an arbitrary 
AM-stationary sequence $\{(x^k,\lambda^k,\delta^k,\varepsilon^k)\}_{k=1}^\infty\subset\R^{n+\ell+\ell+n}$ 
w.r.t.\ $\bar{x}$ in direction $d$. In this regard, the upcoming result 
is remarkable, as it shows that there necessarily still exists at least \textit{one} 
AM-stationary sequence w.r.t.\ $\bar{x}$ in direction $d$ with bounded 
$\{ \lambda^k \}_{k=1}^\infty$ under MSCQ$(d)$ at $\bar x$ (which, as we know from 
\cref{lem:preimage_rule_dir_MS}\,\ref{lem:preimage_rule_dir_MS_dir}, then also
implies $d\in \widehat C(\bar x)\cap\mathbb S$). 

Additionally, for arbitrary $d\in \widehat C(\bar{x})\cap\mathbb S$, the following result 
shows that AM-stationarity in direction $d$ is 
a necessary optimality condition that holds independently of the validity of qualification conditions.

\begin{theorem}\label{thm:locmin_dir_akkt_II}
	Let $\bar x\in\R^n$ be a local minimizer of \eqref{eq:nonsmooth_problem},
	and let $d\in\widehat C(\bar x)\cap\mathbb S$ be chosen arbitrarily.
	Then there exists an AM-stationary sequence
	$\{(x^k,\lambda^k,\delta^k,\varepsilon^k)\}_{k=1}^\infty\subset\R^{n+\ell+\ell+n}$
	w.r.t.\ $\bar x$ in direction $d$ such that
	\[
		\frac{\nnorm{\delta^k}\nnorm{\lambda^k}}{\nnorm{x^k-\bar x}}\to 0.
	\]
	If, additionally, MSCQ$(d)$ holds at $\bar x$,
	then $\{\lambda^k\}_{k=1}^\infty$ is bounded
	for this particular AM-stationary sequence.
\end{theorem}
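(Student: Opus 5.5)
The plan is to reduce to the directional dual condition of \cref{prop:dual_fo_conditions}\,\ref{item:dual_fo_cond_direcioanal} and then to build the approximate multipliers by a quadratic penalty localized at points approaching $\bar x$ along $d$. Since $d\in\widehat C(\bar x)\cap\mathbb S$, we have $-\nabla f(\bar x)\in N_X(\bar x;d)$. Hence there are $t_k\downarrow0$, $d^k\to d$ and $\eta^k\to-\nabla f(\bar x)$ with $\eta^k\in\widehat N_X(z^k)$, where $z^k\coloneqq\bar x+t_kd^k\in X$. By the smooth variational description of regular normals \cite[Theorem~6.11]{RockafellarWets1998}, there is a $C^1$ function $h_k$ with $\nabla h_k(z^k)=\eta^k$ such that $z^k$ is a local minimizer of $-h_k$ over $X$.

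For fixed $k$, choose a radius $r_k\le t_k/k$ on which this minimality holds, and a penalty parameter $\rho_{k}$. Let $x^k$ be a global minimizer of
\[
	-h_k(x)+\rho_k\,\dist^2(F(x),\Gamma)+\nnorm{x-z^k}^2
	\quad\text{over }\mathbb B_{r_k}(z^k).
\]
By the standard penalty argument, $x^k\to z^k$ relative to $r_k$ and $\rho_k\dist^2(F(x^k),\Gamma)\to0$ as $\rho_k\to\infty$. We pick $\rho_k$ so large that $x^k$ lies in the interior of the ball and the following hold:
\[
	\nnorm{x^k-z^k}\le t_k/k,\qquad
	\rho_k\dist^2(F(x^k),\Gamma)\le t_k/k,\qquad
	\dist(F(x^k),\Gamma)\le t_k/k.
\]
Pick a projection $y^k\in\Pi_\Gamma(F(x^k))$ and set
\[
	\delta^k\coloneqq F(x^k)-y^k,\qquad \lambda^k\coloneqq2\rho_k\delta^k .
\]
Then $\lambda^k\in\widehat N_\Gamma(y^k)\subset N_\Gamma(F(x^k)-\delta^k)$, since proximal normals are regular normals. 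Fermat's rule, together with the chain and sum rules for the Lipschitz composition $\dist^2(F(\cdot),\Gamma)$, gives
\[
	\eta^k-2(x^k-z^k)+\nabla h_k(x^k)-\nabla h_k(z^k)\in\partial\langle\lambda^k,F\rangle(x^k),
\]
after using the scalarization formula \eqref{eq:scalarization_rule}. So with
\[
	\varepsilon^k\coloneqq\eta^k+\nabla f(x^k)-2(x^k-z^k)+\nabla h_k(x^k)-\nabla h_k(z^k)\to0
\]
we obtain \eqref{eq:approximate_statonarity}. We may need to shrink $r_k$ further using continuity of $\nabla h_k$.

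Next we check the directional requirements. Since $\nnorm{x^k-z^k}=\oo(t_k)$, we get $(x^k-\bar x)/\nnorm{x^k-\bar x}\to d$, and then $\delta^k/\nnorm{x^k-\bar x}\to0$ follows. Condition \eqref{eq:perturbation_vs_multiplier} holds exactly, because $\lambda^k$ is a positive multiple of $\delta^k$. Finally,
\[
	\nnorm{\delta^k}\nnorm{\lambda^k}=2\rho_k\dist^2(F(x^k),\Gamma)\le 2t_k/k=\oo(\nnorm{x^k-\bar x}),
\]
which gives \eqref{eq:multipliers_not_arbitrarily_unbounded} and the claimed convergence to zero. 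If some $x^k=\bar x$, this is excluded by perturbing, since $\nnorm{x^k-\bar x}\approx t_k>0$ anyway.

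For the MSCQ$(d)$ part, observe that the balls $\mathbb B_{r_k}(z^k)$ eventually lie in $\{\bar x\}+\mathbb B_{\varepsilon,\delta}(d)$, because $r_k=\oo(t_k)$. There the estimate $\dist(x,X)\le\kappa\dist(F(x),\Gamma)$ holds uniformly. We compare $x^k$ with a nearest point $p^k\in X$, which lies in $\mathbb B_{2r_k}(z^k)$. If necessary, we replace the ball constraint by this doubled ball, or we use minimality at $z^k$ over it. Let $L$ bound the Lipschitz constants of $h_k$ and of $\nnorm{\cdot-z^k}^2$ nearby; these are uniform since $\nabla h_k(z^k)=\eta^k$ is bounded and the radii shrink. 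Optimality then gives
\[
	\rho_k\dist^2(F(x^k),\Gamma)\le L\,\nnorm{x^k-p^k}\le L\kappa\dist(F(x^k),\Gamma).
\]
Hence $\nnorm{\lambda^k}=2\rho_k\dist(F(x^k),\Gamma)\le2L\kappa$, so the multipliers of this very sequence are bounded.

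I expect the main obstacle to be the uniformity in this last step. The comparison point $p^k$ must stay in the region where $-h_k$ is minimized at $z^k$, which is needed to write $-h_k(x^k)+\rho_k\dist^2\le -h_k(p^k)+\ldots$. Also, the Lipschitz constant of $h_k$ on these shrinking balls must be bounded independently of $k$. I would handle this by choosing $r_k$ small relative to the minimality radius of $z^k$, and by controlling $\nabla h_k$ via continuity around $z^k$ with $\nabla h_k(z^k)=\eta^k$ bounded.
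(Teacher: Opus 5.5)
Your proof is correct, but it takes a genuinely different route from the paper.

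\textbf{The paper's route.} The paper never invokes \cref{prop:dual_fo_conditions}\,\ref{item:dual_fo_cond_direcioanal}. It picks feasible $\tilde x^k$ approaching $\bar x$ along $d$ with $f(\tilde x^k)-f(\bar x)+t_k^2\le t_kr_k^2$. It then applies Ekeland's principle, so that the perturbed minimizers $\hat x^k$ still travel along $d$, and penalizes a slack variable $\delta$ quadratically via \cref{lem:normals_to_perturbed_feasible_set}. The decay of $\nnorm{\delta^k}\nnorm{\lambda^k}/\nnorm{x^k-\bar x}$ is read off from an objective-value comparison. A case distinction on whether $\hat\lambda^k\to0$ is then needed to obtain $\delta^k/\nnorm{x^k-\bar x}\to0$.

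\textbf{Your route.} You feed local optimality in only through $-\nabla f(\bar x)\in N_X(\bar x;d)$. You localize at the feasible points $z^k$ via the smooth variational description of regular normals and penalize $\dist^2(F(\cdot),\Gamma)$. Because $\rho_k$ is tuned separately for each $k$, you can impose $\dist(F(x^k),\Gamma)\le t_k/k$ and $\rho_k\dist^2(F(x^k),\Gamma)\le t_k/k$ directly. So you need neither Ekeland's principle nor the case distinction. Your MSCQ$(d)$ argument is the same nearest-feasible-point comparison as the paper's, with $h_k$ in place of $f$.

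\textbf{What each buys.} Your route is more modular and proves slightly more: for feasible $\bar x$ and $d\in\mathbb S$, the condition $-\nabla f(\bar x)\in N_X(\bar x;d)$ alone yields AM-stationarity in direction $d$ with the stated decay, and bounded multipliers under MSCQ$(d)$. The price is heavier external machinery: \cite[Proposition~3.2]{OuyangYeZhang2024} behind \cref{prop:dual_fo_conditions}, \cite[Theorem~6.11]{RockafellarWets1998}, and a limiting chain rule for $\dist^2(F(\cdot),\Gamma)$ with merely Lipschitz $F$. Also, your sequence is generated by the abstract functions $h_k$ rather than by a penalty scheme on $f$ itself. That penalty scheme on $f$ is what the paper uses to motivate the numerical interpretation in \cref{sec:approx_CQs}. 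The extra information in \cref{rem:nonvanishing_seq} is still available from your sequence, since $\lambda^k\to0$ along a subsequence would force $\nabla f(\bar x)=0$.

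\textbf{Two details to state more carefully.}
\begin{enumerate}
\item $y^k$ is not an arbitrary projection. It is the one furnished by the chain rule through $\partial\bigl(\dist^2(\cdot,\Gamma)\bigr)(w)=2\bigl(w-\Pi_\Gamma(w)\bigr)$.
\item In the MSCQ$(d)$ step, secure admissibility of $p^k$ by taking $\rho_k$ so large that $\nnorm{x^k-z^k}\le r_k/2$. Since $z^k\in X$, this gives $\nnorm{p^k-z^k}\le 2\nnorm{x^k-z^k}\le r_k$. Global optimality of $x^k$ in the penalized problem, together with the bound $\nnorm{\nabla h_k-\eta^k}\le 1/k$ on $\mathbb B_{r_k}(z^k)$, then yields the uniform constant $L$.
\end{enumerate}
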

\begin{proof}
	Let $\varepsilon>0$ be chosen such that $f(x)\geq f(\bar x)$ 
	is valid for all $x\in X\cap\mathbb B_\varepsilon(\bar x)$.
	Due to $d\in\widehat C(\bar x)$, 
	on the one hand, \cref{prop:primal_fo_conditions}\,\ref{item:primal_fo_condition_nec}
	implies $f'(\bar x)d=0$.
	On the other hand,
	we find sequences $\{\tilde x^k\}_{k=1}^\infty\subset X$
	and $\{t_k\}_{k=1}^\infty\subset\R_+$ such that
	$\tilde x^k\to\bar x$, $t_k\downarrow 0$, and $(\tilde x^k-\bar x)/t_k\to d$.
	Furthermore, we may assume $\tilde x^k\neq \bar x$ for all $k\in\N$ as $d\in\mathbb S$.
	We note that
	\[
		\frac{\tilde x^k-\bar x}{\nnorm{\tilde x^k-\bar x}}
		=
		\frac{\tilde x^k-\bar x}{t_k}
		\frac{t_k}{\nnorm{\tilde x^k-\bar x}}
		\to
		\frac{d}{\nnorm{d}}
		=
		d
	\]
	and
	\begin{equation}\label{eq:t_k_vs_norm_of_iterates}
		\frac{t_k}{\nnorm{\tilde x^k-\bar x}}\to \frac{1}{\nnorm{d}}=1.
	\end{equation}
	As $\tilde x^k \to \bar{x}$ yields $\tilde x^k \in \mathbb B_\varepsilon(\bar x)$
	for sufficiently large $k\in\N$, 
	let us pass, without loss of generality, to the subsequence 
	for which $\tilde x^k \in \B_\varepsilon(\bar{x})$ holds for all $k\in\N$.
	Then, for all $k\in\N$, $f(\tilde x^k)\geq f(\bar x)$ is true
	due to $\tilde x^k\in X\cap\mathbb B_\varepsilon(\bar x)$. Together with
	\[
		0
		=
		f'(\bar x)d 
		= 
		\lim\limits_{k\to\infty}\frac{f(\tilde x^k)-f(\bar x)}{\nnorm{\tilde x^k-\bar x}}
	\]
	and \eqref{eq:t_k_vs_norm_of_iterates}, 
	this guarantees the existence of a sequence $\{r_k\}_{k=1}^\infty\subset\R_+$ such that $r_k\downarrow 0$ and
	\begin{equation}\label{eq:estimating_directional_derivative_objective}
		0< f(\tilde x^k)-f(\bar x)+ t_k^2 \leq t_kr_k^2,
		\qquad
		\forall k\in\N.
	\end{equation}
	For each $k\in\N$, 
	we note that $\tilde x^k$ is an $f(\tilde x^k)-f(\bar x)+t_k^2$-minimizer of the restricted problem
	\[
		\begin{aligned}
		&\min\limits_{x}&	&f(x)&	 
		\\
		&\text{\,s.t.}&		&F(x)\in \Gamma,\,x\in \mathbb B_\varepsilon(\bar x).&
		\end{aligned}
	\]
	Applying Ekeland's variational principle, 
	see e.g.\ \cite[Proposition~1.43]{RockafellarWets1998},
	we find, for all $k\in\N$, some $\hat x^k\in\R^n$ with $\nnorm{\hat x^k-\tilde x^k}\leq t_kr_k$
	such that $\hat x^k$ is the uniquely determined minimizer of problem
	\begin{equation}\label{eq:ekeland_problem}\tag{P$^\textup{Eke}(k)$}
		\begin{aligned}
		&\min\limits_{x}&	&f(x) + \frac{f(\tilde x^k)-f(\bar x)+t_k^2}{t_kr_k}\nnorm{x-\hat x^k}&	 
		\\
		&\text{\,s.t.}&		&F(x)\in \Gamma,\,x\in \mathbb B_\varepsilon(\bar x).&
		\end{aligned}
	\end{equation}
	Let us note that
	\[
		\frac{\nnorm{\hat x^k-\bar x}}{t_k}
		\leq
		\frac{\nnorm{\tilde x^k-\bar x}}{t_k} + \frac{\nnorm{\hat x^k-\tilde x^k}}{t_k}
		\leq
		\frac{\nnorm{\tilde x^k-\bar x}}{t_k} + r_k
		\to 1
	\]
	and
	\[
		\frac{\nnorm{\hat x^k-\bar x}}{t_k}
		\geq
		\frac{\nnorm{\tilde x^k-\bar x}}{t_k} - \frac{\nnorm{\hat x^k-\tilde x^k}}{t_k}
		\geq
		\frac{\nnorm{\tilde x^k-\bar x}}{t_k} - r_k
		\to 1
	\]
	yield $\nnorm{\hat x^k-\bar x}/t_k\to 1$, and this gives $\hat x^k \to \bar x$ as well as
	\begin{align*}
		\frac{\hat x^k-\bar x}{\nnorm{\hat x^k-\bar x}}
		&=
		\frac{\hat x^k-\tilde x^k}{\nnorm{\hat x^k-\bar x}}
		+
		\frac{\tilde x^k-\bar x}{\nnorm{\hat x^k-\bar x}}
		\\
		&=
		\frac{\hat x^k-\tilde x^k}{t_k}\frac{t_k}{\nnorm{\hat x^k-\bar x}}
		+
		\frac{\tilde x^k-\bar x}{t_k}\frac{t_k}{\nnorm{\hat x^k-\bar x}}
		\to d.
	\end{align*}
	To proceed, let us inspect the penalized problem
	\begin{equation}\label{eq:ekeland_problem_penalized}\tag{P$^\textup{Eke}(k,\mu)$}
		\begin{aligned}
		&\min\limits_{x,\delta}&	&f(x) + \frac{f(\tilde x^k)-f(\bar x)+t_k^2}{t_kr_k}\nnorm{x-\hat x^k} 
										  + \frac{\mu}{2}\nnorm{\delta}^2 + \frac12\nnorm{x-\hat x^k}^2&	 
		\\
		&\text{\,s.t.}&				&F(x)-\delta\in \Gamma,&
		\\
		&&							&x\in \mathbb B_\varepsilon(\bar x),\,\delta\in\mathbb B_1(0)&
		\end{aligned}
	\end{equation}
	for all $k\in\N$, where $\mu\in\N$ is a penalty parameter.
	As $\hat x^k$ is the minimizer of \eqref{eq:ekeland_problem} for all $k\in\N$,
	$(\hat x^k,0)$ is feasible for \eqref{eq:ekeland_problem_penalized} for all $k,\mu\in\N$.
	Thus, together with continuity of $F$ and closedness of $\Gamma$, it is guaranteed
	that the feasible set of \eqref{eq:ekeland_problem_penalized} 
	is nonempty and compact for all $k,\mu\in\N$.
	As the objective function of \eqref{eq:ekeland_problem_penalized} is continuous,
	the latter admits a global minimizer $(\hat x^{k,\mu},\hat \delta^{k,\mu})$ for all $k,\mu\in\N$.
	Noting that $\{\hat x^{k,\mu}\}_{\mu=1}^\infty$ remains bounded by definition,
	we can take a subsequence (without relabeling)
	in order to find, for all $k\in\N$, some $\check x^k\in\mathbb B_\varepsilon(\bar x)$ such that 
	$\hat x^{k,\mu}\to\check x^k$ as $\mu\to\infty$.
	
	Recalling that $(\hat x^k,0)$ is feasible for \eqref{eq:ekeland_problem_penalized} for all $k,\mu\in\N$,
	we find
	\begin{equation}\label{eq:penalty_estimate_for_ekeland_problem}
		\begin{aligned}
		f(\hat x^{k,\mu}) 
		&
		+ 
		\frac{f(\tilde x^k)-f(\bar x)+t_k^2}{t_kr_k}\nnorm{\hat x^{k,\mu}-\hat x^k} 
		\\
		&+ 
		\frac{\mu}{2}\nnorm{\hat\delta^{k,\mu}}^2 
		+ 
		\frac12\nnorm{\hat x^{k,\mu}-\hat x^k}^2
		\leq
		f(\hat x^k),
		\qquad
		\forall k,\mu\in\N
		\end{aligned}
	\end{equation}
	by global optimality of $(\hat x^{k,\mu},\hat\delta^{k,\mu})$.
	From \eqref{eq:estimating_directional_derivative_objective} and
	\eqref{eq:penalty_estimate_for_ekeland_problem}, we obtain
	\[
		\nnorm{\hat\delta^{k,\mu}}^2
		\leq
		\frac{2}{\mu}(f(\hat x^k)-f(\hat x^{k,\mu})),
		\qquad
		\forall k,\mu\in\N,
	\]
	and, for all $k\in\N$, since $\{f(\hat x^{k,\mu})\}_{\mu=1}^\infty$ is bounded by continuity of $f$,
	$\hat\delta^{k,\mu}\to 0$ follows as $\mu\to\infty$.
	Thus, from $F(\hat x^{k,\mu})-\hat\delta^{k,\mu}\in\Gamma$ for each $k,\mu\in\N$
	and closedness of $\Gamma$, it follows for each $k\in\N$ that $F(\check x^k)\in\Gamma$, 
	implying feasibility of $\check x^k$ for \eqref{eq:ekeland_problem}.
	As $\hat x^k$ is the minimizer of the latter for each $k\in\N$, 
	\[
		f(\hat x^k) 
		\leq 
		f(\check x^k) + \frac{f(\tilde x^k)-f(\bar x)+t_k^2}{t_kr_k}\nnorm{\check x^k-\hat x^k},
		\qquad
		\forall k\in\N
	\]
	follows. For all $k\in\N$, exploiting \eqref{eq:penalty_estimate_for_ekeland_problem} once more, we find
	\begin{align*}
		f(\hat x^k) 
		&\leq 
		f(\check x^k) + \frac{f(\tilde x^k)-f(\bar x)+t_k^2}{t_kr_k}\nnorm{\check x^k-\hat x^k}
		\\
		&\leq
		f(\check x^k) + \frac{f(\tilde x^k)-f(\bar x)+t_k^2}{t_kr_k}\nnorm{\check x^k-\hat x^k}
			+ \frac12\nnorm{\check x^k-\hat x^k}^2
		\\
		&=
		\lim\limits_{\mu\to\infty}
		\left(
			f(\hat x^{k,\mu}) 
			+ 
			\frac{f(\tilde x^k)-f(\bar x)+t_k^2}{t_kr_k}\nnorm{\hat x^{k,\mu}-\hat x^k}
			+ 
			\frac12\nnorm{\hat x^{k,\mu}-\hat x^k}^2
		\right)
		\\
		&\leq
		f(\hat x^k),
	\end{align*}
	and $\check x^k=\hat x^k$ follows.
	Particularly, we have $\hat x^{k,\mu}\to\hat x^k$ as $\mu\to\infty$ for all $k\in\N$.
	
	Let us assume, by considering the tail of the sequences if necessary,
	that $\nnorm{\hat x^{k,\mu} - \bar x}<\varepsilon$ and $\nnorm{\hat\delta^{k,\mu}}<1$
	hold for all $k,\mu\in\N$.
	The former of these requirements can be ensured as we know $\hat x^{k,\mu}\to\hat x^k$ as
	$\mu\to\infty$ and $\hat x^k\to\bar x$.
	As the objective function of \eqref{eq:ekeland_problem_penalized} is locally
	Lipschitz continuous for all $k,\mu\in\N$, we can apply the necessary optimality conditions
	from \cite[Theorem~8.15]{RockafellarWets1998}
	together with \cite[Exercises~8.8(c), 8.27]{RockafellarWets1998} and
	\cref{lem:normals_to_perturbed_feasible_set} to find
	$\hat \xi^{k,\mu}\in\mathbb B_1(0)$ such that, for all $k,\mu\in\N$,
	\begin{align*}
		\hat x^k-\hat x^{k,\mu} 
		- 
		\frac{f(\tilde x^k)-f(\bar x)+t_k^2}{t_kr_k}\hat \xi^{k,\mu}
		-
		\nabla f(\hat x^{k,\mu})
		&\in 
		\partial\langle\mu\hat\delta^{k,\mu},F\rangle(\hat x^{k,\mu}),
		\\
		\mu\hat\delta^{k,\mu}
		&\in 
		N_\Gamma(F(\hat x^{k,\mu})-\hat \delta^{k,\mu}).
	\end{align*}
	For each $k\in\N$, pick $\mu(k)\in\N$ so large such that
	$\nnorm{\hat x^{k,\mu(k)}-\hat x^k}\leq t_kr_k$ and $\nnorm{\hat \delta^{k,\mu(k)}}\leq 1/k$,
	and set
	\begin{equation}\label{eq:some_settings}
		\begin{aligned}
		x^k&\coloneqq \hat x^{k,\mu(k)},&
		\quad
		\hat\lambda^k&\coloneqq \mu(k)\hat\delta^{k,\mu(k)},&
		\\
		\hat\delta^k&\coloneqq \hat \delta^{k,\mu(k)},&
		\quad
		\hat\varepsilon^k&\coloneqq \hat x^k-\hat x^{k,\mu(k)}
		-
		\frac{f(\tilde x^k)-f(\bar x)+t_k^2}{t_kr_k}\hat \xi^{k,\mu(k)}.&
		\end{aligned}	
	\end{equation}
	Then, from above, we find
	\begin{subequations}\label{eq:nec_cond_pen}
		\begin{align}
			\label{eq:nec_cond_pen_i}
			\hat\varepsilon^k-\nabla f(x^k)&\in \partial\langle\hat\lambda^k,F\rangle(x^k),
			\\
			\label{eq:nec_cond_pen_ii}
			\hat\lambda^k&\in N_\Gamma(F(x^k)-\hat\delta^k)
		\end{align}
	\end{subequations}
	for each $k\in\N$, and $\nnorm{x^k-\hat x^k}\leq t_kr_k$ is valid as well.
	Let us note that
	\[
		\frac{\nnorm{x^k-\bar x}}{t_k}
		\leq
		\frac{\nnorm{\hat x^k-\bar x}}{t_k} + \frac{\nnorm{x^k-\hat x^k}}{t_k}
		\leq
		\frac{\nnorm{\hat x^k-\bar x}}{t_k} + r_k
		\to
		1
	\]
	and
	\[
		\frac{\nnorm{x^k-\bar x}}{t_k}
		\geq
		\frac{\nnorm{\hat x^k-\bar x}}{t_k} - \frac{\nnorm{x^k-\hat x^k}}{t_k}
		\geq
		\frac{\nnorm{\hat x^k-\bar x}}{t_k} - r_k
		\to
		1
	\]
	yield $\nnorm{x^k-\bar x}/t_k\to 1$, and this gives $x^k \to \bar{x}$ as well as 
	\begin{align*}
		\frac{x^k-\bar x}{\nnorm{x^k-\bar x}}
		&=
		\frac{x^k-\hat x^k}{\nnorm{x^k-\bar x}}
		+
		\frac{\hat x^k-\bar x}{\nnorm{x^k-\bar x}}
		\\
		&=
		\frac{x^k-\hat x^k}{t_k}\frac{t_k}{\nnorm{x^k-\bar x}}
		+
		\frac{\hat x^k-\bar x}{\nnorm{\hat x^k-\bar x}}
		\frac{\nnorm{\hat x^k-\bar x}}{t_k}\frac{t_k}{\nnorm{x^k-\bar x}}
		\to d.
	\end{align*}
	Particularly, $x^k\neq\bar x$ is valid for all large enough $k\in\N$, and we pass,
	without loss of generality, to the subsequence for which $x^k\neq\bar x$ holds for
	all $k\in\N$.
	From above $\hat\delta^k\to 0$ is also true.
	Furthermore, we have for all $k\in\N$ that
	\[
		\nnorm{\hat\varepsilon^k}
		\leq
		\nnorm{\hat x^k-x^k} + \frac{f(\tilde x^k)-f(\bar x)+t_k^2}{t_kr_k}
		\leq
		(t_k+1)r_k
		\to
		0,
	\]
	and $\hat\varepsilon^k\to 0$ follows.
	
	To finalize the proof, we distinguish between two cases.
	First, let us assume that $\{\hat\lambda^k\}_{k=1}^\infty$ does not converge
	to zero. Then, along a subsequence (without relabeling),
	$\{\nnorm{\hat\lambda^k}\}_{k=1}^\infty$ is bounded away from zero.
	Let us note that this, particularly, yields $\hat\delta^k\neq 0$
	for all $k\in\N$ due to \eqref{eq:some_settings}.
	This, in turn, also yields $x^k\notin X$ for all $k\in\N$.
	Indeed, if $x^k\in X$ for some $k\in\N$,
	then, recalling that $x^k\in\mathbb B_\varepsilon(\bar x)$ holds true,
	$(x^k,0)$ would be feasible for
	\hyperref[eq:ekeland_problem_penalized]{(P$^\textup{Eke}(k,\mu(k))$)}
	with a smaller objective value than $(x^k,\hat\delta^k)$,
	which is a contradiction to the global optimality of the latter.
	From \eqref{eq:estimating_directional_derivative_objective},
	\eqref{eq:penalty_estimate_for_ekeland_problem}, and \eqref{eq:some_settings}
	we find
	\[
		\frac{f(x^k)-f(\bar x)}{\nnorm{x^k-\bar x}}
		-
		\frac{f(\hat x^k)-f(\bar x)}{\nnorm{x^k-\bar x}}
		+
		\frac12\frac{\nnorm{\hat\delta^k}\nnorm{\hat\lambda^k}}{\nnorm{x^k-\bar x}}
		\leq 0,
		\qquad
		\forall k\in\N.
	\]
	The first summand converges to $f'(\bar x)d=0$,
	and it is easy to show that the same holds true for the second summand
	by exploiting $\nnorm{\hat x^k-\bar x}/t_k\to 1$ and $\nnorm{x^k-\bar x}/t_k\to 1$.
	Hence, the third summand has to converge to $0$ as well.
	Our assumptions, thus, guarantee that $\hat\delta^k/\nnorm{x^k-\bar x}\to 0$.
	Furthermore, we have $\nnorm{\hat\delta^k}\hat\lambda^k=\nnorm{\hat\lambda^k}\hat\delta^k$ 
	for each $k\in\N$ by construction.
	Thus, choosing $\lambda^k\coloneqq \hat\lambda^k$, $\delta^k\coloneqq \hat\delta^k$,
	and $\varepsilon^k\coloneqq \hat\varepsilon^k$ for each $k\in\N$ yields the first claim.
	
	Regarding the second claim, let us show that $\{\lambda^k\}_{k=1}^\infty$ 
	remains bounded if MSCQ$(d)$ holds at $\bar x$.
	To this end, we first note that, for any $\tilde \varepsilon >0$ and $\tilde \delta >0$, we have
	$ x^k \in \{\bar{x}\} + \mathbb B_{\tilde\varepsilon,\tilde\delta}(d) $
	for sufficiently large $k\in\mathbb{N}$ due to $x^k\to\bar{x}$ and 
	$(x^k-\bar x)/\nnorm{x^k-\bar x}\to d$.
	Now, assuming that MSCQ$(d)$ holds at $\bar{x}$, there exists $\kappa>0$ such that,
	for sufficiently large $k\in\mathbb{N}$, we find $\grave x^k\in F^{-1}(\Gamma)$ with
	\begin{equation}\label{eq:applying_dir_mscq_in_proof}
		\nnorm{x^k - \grave x^k} 
		= 
		\dist(x^k, F^{-1}(\Gamma)) 
		\leq 
		\kappa \dist(F(x^k),\Gamma) 
		\leq 
		\kappa \nnorm { \delta^k },
	\end{equation}
	where the last inequality follows from $F(x^k)-\delta^k\in \Gamma$.
	Let us also note that $\nnorm{x^k-\grave x^k}>0$ holds true for all sufficiently large $k\in\N$
	as $x^k\notin X=F^{-1}(\Gamma)$ is valid.
	Due to $\delta^k \to 0$, we obtain $\nnorm{x^k - \grave x^k}\to 0$, and, in turn,
	the convergence $\grave x^k \to \bar{x}$. As this implies 
	$\grave x^k \in \mathbb B_\varepsilon(\bar x)$ for $k\in\mathbb{N}$ large enough
	while $F(\grave x^k) \in \Gamma$ holds by the choice of $\grave x^k$, we obtain that 
	$(\grave x^k, 0)$ is feasible for \eqref{eq:ekeland_problem_penalized} for all 
	$k\in\mathbb{N}$ large enough and $\mu\in\mathbb{N}$.
	Recalling that $(x^k,\delta^k)$ is a global minimizer of 
	\hyperref[eq:ekeland_problem_penalized]{(P$^\textup{Eke}(k,\mu(k))$)},
	it follows for all $k\in\mathbb{N}$ large enough that
	\begin{align*}
		& f(x^k) + \frac{f(\tilde x^k)-f(\bar x)+t_k^2}{t_kr_k}\nnorm{x^k-\hat x^k} 
					+ \frac{\mu(k)}{2}\nnorm{\delta^k}^2 + \frac12\nnorm{x^k-\hat x^k}^2 \\
		\leq\,
		& f(\grave x^k) + \frac{f(\tilde x^k)-f(\bar x)+t_k^2}{t_kr_k}\nnorm{\grave x^k-\hat x^k} 
					+ \frac12\nnorm{\grave x^k-\hat x^k}^2,
	\end{align*}
	which yields, together with \eqref{eq:estimating_directional_derivative_objective},
	\eqref{eq:some_settings}, \eqref{eq:applying_dir_mscq_in_proof}, and
	$\delta^k \neq 0$ for all $k\in\N$,
	\begin{align*}
		\nnorm{ \lambda^k }
		=\, &
		\frac{\mu(k) \nnorm{ \delta^k }^2}{\nnorm{ \delta^k }}  \\
		\leq\, &
		\,\frac{2(f(\grave x^k)-f(x^k)) }{\nnorm{ \delta^k }}
		+ 	\frac{2(f(\tilde x^k)-f(\bar x)+t_k^2)}{t_kr_k}
			\frac{\nnorm{\grave x^k-\hat x^k} -\nnorm{x^k-\hat x^k}}{\nnorm{ \delta^k }} \\
		&\,+ \frac{\nnorm{\grave x^k-\hat x^k}^2-\nnorm{x^k-\hat x^k}^2}{{\nnorm{ \delta^k }}} \\
		\leq\, &
		\frac{2\kappa(f(\grave x^k)-f(x^k)) }{\nnorm{ x^k -\grave x^k }}
		+ 2\kappa r_k \frac{\nnorm{\grave x^k-\hat x^k} -\nnorm{x^k-\hat x^k}}{\nnorm{ x^k -\grave x^k }}
		+ \frac{\kappa(\nnorm{\grave x^k-\hat x^k}^2-\nnorm{x^k-\hat x^k}^2)}{{\nnorm{ x^k -\grave x^k}}}.
	\end{align*}
	The first summand is bounded as $f$ is continuously differentiable
	while $x^k\to\bar x$ and $\grave x^k\to\bar x$,
	whereas the second and third summands are bounded due to the inequalities
	\[
		\nnorm{\grave x^k-\hat x^k} -\nnorm{x^k-\hat x^k} \leq \nnorm{ x^k -\grave x^k}
	\]
	and
	\[
		\left| \nnorm{\grave x^k-\hat x^k}^2-\nnorm{x^k-\hat x^k}^2 \right|
		\leq
		\nnorm{\grave x^k - x^k} (\nnorm{x^k - \hat x^k} + \nnorm{ \grave x^k - \hat x^k})
	\]
	for all $k\in\N$, respectively. Thus, we have shown that $\{\lambda^k\}_{k=1}^\infty$ is bounded.
	
	Considering the second case, assume that $\hat\lambda^k\to 0$.
	For each $k\in\N$, \eqref{eq:nec_cond_pen_i} yields the existence of
	$\eta^k\in \partial\langle\hat\lambda^k,F\rangle(x^k)$ such that $\hat\varepsilon^k-\nabla f(x^k) = \eta^k$.
	Recalling \eqref{eq:scalarization_rule} and local Lipschitz continuity of $F$,
	\cite[Theorem~4.7]{Mordukhovich2006} guarantees $\eta^k\to 0$.
	Hence, noting that $0\in \partial\langle 0, F\rangle(x^k)$ and $0\in N_\Gamma(F(x^k))$ hold for each $k\in\N$,
	we can choose $\lambda^k\coloneqq 0$, $\delta^k\coloneqq 0$, and 
	$\varepsilon^k\coloneqq \hat\varepsilon^k-\eta^k$ for each $k\in\N$ in order to
	show the first claim.
	As $\{\lambda^k\}_{k=1}^\infty$ is bounded, the second assertion of the theorem is
	trivial in this case.
\end{proof}

In particular, the proof of \cref{thm:locmin_dir_akkt_II} shows the following.
\begin{remark}\label{rem:nonvanishing_seq}
	Let $\bar x\in\R^n$ be a local minimizer of \eqref{eq:nonsmooth_problem}, and let 
	$d\in\widehat C(\bar x)\cap \mathbb{S}$ be chosen arbitrarily. 
	Then we either have $\nabla f(\bar x) = 0$ or there
	exists an AM-stationary sequence
	$\{(x^k,\lambda^k,\delta^k,\varepsilon^k)\}_{k=1}^\infty
	\subset
	\R^{n+\ell+\ell+n}$
	w.r.t.\ $\bar x$ in direction $d$ where $\{ \nnorm{\lambda^k} \}_{k=1}^\infty$ is bounded 
	away from zero and $\delta^k \neq 0$ as well as $x^k\notin X$ hold for all $k\in\N$.
	This type of disjunction is rather clunky to work with as it is a mixture 
	of a point-based as well as a sequence-based condition.
	However, as the proof of \cref{thm:locmin_dir_akkt_II} illustrates, 
	if $\nabla f(\bar x) = 0$, we can still find a sequence
	$\{(x^k,\lambda^k,\delta^k,\varepsilon^k)\}_{k=1}^\infty
		\subset
		\R^{n+\ell+\ell+n}$
	that is AM-stationary w.r.t.\ $\bar x$ in direction $d$, but we cannot ensure 
	that the multipliers $\lambda^k$ and perturbations $\delta^k$
	of this sequence are nonvanishing for all $k\in\N$. 
	Beware that our definition of an AM-stationary sequence w.r.t.\ $\bar x$
	in direction $d$ covers both situations by merely requiring
	\eqref{eq:perturbation_vs_multiplier} instead of the seemingly more natural condition
	\eqref{eq:residual_va_multiplier_nontrivial}, as the latter
	implicitly requires $\delta^k\neq 0$ and $\lambda^k\neq 0$ for all $k\in\N$.
\end{remark}

As a corollary of \cref{thm:locmin_dir_akkt_II}, we are now in position to verify in an 
alternative way, which exploits AM-stationarity, that the result from
\cref{cor:MStat_via_MS}~\ref{cor:MStat_via_MS2} holds, i.e., that a given local 
minimizer of \eqref{eq:nonsmooth_problem}
is M-stationary in a critical direction $d\in C(\bar x) \cap \mathbb S$
provided MSCQ$(d)$ holds at this point.

\begin{corollary}\label{thm:locmin_dir_subreg}
	Let $\bar x\in\R^n$ be a local minimizer of \eqref{eq:nonsmooth_problem},
	and let $d\in C(\bar x) \cap \mathbb S$ be chosen such
	that MSCQ$(d)$ holds at $\bar{x}$.
	Then $\bar x$ is M-stationary in direction $d$.
\end{corollary}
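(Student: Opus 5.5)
The plan is to derive this from \cref{thm:locmin_dir_akkt_II} together with \cref{lem:taking_the_limit_in_directional_approximate_conditions}. The theorem, however, is stated for implicit critical directions $d\in\widehat C(\bar x)\cap\mathbb S$, whereas here we only know $d\in C(\bar x)\cap\mathbb S$. So the first step is to move from the explicit to the implicit critical cone.

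Since $d\in C(\bar x)\subset T^\textup{lin}_{F,\Gamma}(\bar x)$ and MSCQ$(d)$ holds at $\bar x$, \cref{lem:preimage_rule_dir_MS}\,\ref{lem:preimage_rule_dir_MS_dir} yields $d\in T_X(\bar x)$. Together with $f'(\bar x)d\leq 0$, which is part of $d\in C(\bar x)$, this gives $d\in\widehat C(\bar x)\cap\mathbb S$. Hence \cref{thm:locmin_dir_akkt_II} applies.

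\cref{thm:locmin_dir_akkt_II} provides an AM-stationary sequence $\{(x^k,\lambda^k,\delta^k,\varepsilon^k)\}_{k=1}^\infty$ w.r.t.\ $\bar x$ in direction $d$. Moreover, because MSCQ$(d)$ holds at $\bar x$, its second assertion guarantees that $\{\lambda^k\}_{k=1}^\infty$ is bounded for this particular sequence.

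Finally, $d\in C(\bar x)\cap\mathbb S$, so \cref{lem:taking_the_limit_in_directional_approximate_conditions} applies to this sequence with bounded multipliers. It delivers M-stationarity of $\bar x$ in direction $d$: one extracts a convergent subsequence $\lambda^k\to\bar\lambda$ and passes to the limit using the robustness results for the directional limiting normal cone and the directional subdifferential.

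No step here is a real obstacle, since the substantial work, namely the boundedness of the multipliers under MSCQ$(d)$, was already carried out in the proof of \cref{thm:locmin_dir_akkt_II}. The only point needing care is the first step. It is what makes the corollary cover explicit critical directions even though \cref{thm:locmin_dir_akkt_II} concerns only implicit ones, and it relies on MSCQ$(d)$ forcing $d$ to be tangent to $X$.
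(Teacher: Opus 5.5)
Your proposal is correct and follows essentially the same route as the paper. The paper likewise uses \cref{lem:preimage_rule_dir_MS}\,\ref{lem:preimage_rule_dir_MS_dir} to get $d\in\widehat C(\bar x)\cap\mathbb S$, then takes the bounded-multiplier AM-stationary sequence from \cref{thm:locmin_dir_akkt_II}, and concludes with \cref{lem:taking_the_limit_in_directional_approximate_conditions}.
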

\begin{proof}
	With the aid of \cref{lem:preimage_rule_dir_MS}\,\ref{lem:preimage_rule_dir_MS_dir}
	we immediately obtain that 
	MSCQ$(d)$ at $\bar{x}$
	implies $d\in\widehat C(\bar x) \cap \mathbb S$.
	The existence of an AM-stationary sequence
	$\{(x^k,\lambda^k,\delta^k,\varepsilon^k)\}_{k=1}^\infty
		\subset
		\R^{n+\ell+\ell+n}$ w.r.t.\ $\bar x$ in direction $d$ such that
	 $\{\lambda^k\}_{k=1}^\infty$ is bounded now follows 
	from \cref{thm:locmin_dir_akkt_II}.
	Hence, M-stationarity of $\bar x$ in direction $d$ is clear from 
	\cref{lem:taking_the_limit_in_directional_approximate_conditions}.
\end{proof}

Another important consequence of \cref{thm:locmin_dir_akkt_II} is listed below.
\begin{corollary}\label{cor:dir_asymp_stat_under_GACQ}
	Let $\bar x\in\R^n$ be a local minimizer of \eqref{eq:nonsmooth_problem}
	at which GACQ holds.
	Then $\bar x$ is AM-stationary in direction $d$ 
	for all $d\in C(\bar x)\cap\mathbb S$.
\end{corollary}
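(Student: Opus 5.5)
The plan is to reduce the claim directly to \cref{thm:locmin_dir_akkt_II} by showing that, under GACQ, the explicit and implicit critical cones coincide. GACQ at $\bar x$ means $T_X(\bar x)=T^\textup{lin}_{F,\Gamma}(\bar x)$. Comparing the definition of $C(\bar x)$ with that of $\widehat C(\bar x)$ from \eqref{eq:implicit_critical_cone}, the only difference is that the linearization cone appears in the former and the tangent cone in the latter. Hence GACQ gives
\[
	C(\bar x)
	=
	\{d\in\R^n\,|\,f'(\bar x)d\leq 0,\,d\in T^\textup{lin}_{F,\Gamma}(\bar x)\}
	=
	\{d\in\R^n\,|\,f'(\bar x)d\leq 0,\,d\in T_X(\bar x)\}
	=
	\widehat C(\bar x).
\]
Intersecting with $\mathbb S$ yields $C(\bar x)\cap\mathbb S=\widehat C(\bar x)\cap\mathbb S$.

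Now fix an arbitrary $d\in C(\bar x)\cap\mathbb S$. By the above, $d\in\widehat C(\bar x)\cap\mathbb S$. Since $\bar x$ is a local minimizer of \eqref{eq:nonsmooth_problem}, \cref{thm:locmin_dir_akkt_II} applies. It provides an AM-stationary sequence w.r.t.\ $\bar x$ in direction $d$, even one with $\nnorm{\delta^k}\nnorm{\lambda^k}/\nnorm{x^k-\bar x}\to 0$, which in particular satisfies \eqref{eq:multipliers_not_arbitrarily_unbounded}. Hence $\bar x$ is AM-stationary in direction $d$ in the sense of \cref{def:approx_stat_sequence_dir}.

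Note that $d\in T^\textup{lin}_{F,\Gamma}(\bar x)\cap\mathbb S$, as \cref{def:approx_stat_sequence_dir} requires, because $C(\bar x)\subset T^\textup{lin}_{F,\Gamma}(\bar x)$. There is no genuine obstacle here. The only point to watch is that equality of the cones needs the full equality in GACQ rather than the weaker GGCQ, since polars do not recover the cones themselves.
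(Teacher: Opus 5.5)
Your proposal is correct and follows the paper's own reasoning: the paper states this corollary as a direct consequence of \cref{thm:locmin_dir_akkt_II} without further proof, the implicit step being exactly your observation that GACQ turns $C(\bar x)$ into $\widehat C(\bar x)$. Your side checks are also sound, namely that $d\in T^\textup{lin}_{F,\Gamma}(\bar x)\cap\mathbb S$ and that the theorem's stronger limit property yields \eqref{eq:multipliers_not_arbitrarily_unbounded}.
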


We have seen in \cref{sec:dir_stat} that,
even in situations where $F$ is continuously differentiable,
GACQ alone might not be enough to guarantee M-stationarity
of local minimizers in all critical directions,
see \cref{ex:SOC_example}.
In fact, some additional qualification conditions on the problem
data are required to get this assertion,
like local polyhedrality of $\Gamma$ 
or local convexity of $\Gamma$ together with directional metric subregularity
of a linearized feasibility mapping,
see \cref{lem:dir_MSt_under_GACQ} and \cref{cor:dir_MSt_disjunctive}.
Similarly, \cref{cor:dir_asymp_stat_under_GACQ} merely guarantees
AM-stationarity of local minimizers in all critical directions in the presence
of GACQ. Some additional mild directional qualification condition then
might be enough to guarantee directional M-stationarity,
as we will illustrate later.

Recall that \cref{thm:locmin_dir_akkt_II} implies, given a local minimizer $\bar x\in\R^n$ of \eqref{eq:nonsmooth_problem},
that $\bar x$ is AM-stationary in each direction from $\widehat C(\bar x)\cap\mathbb S$.
Naturally, one may ask the question whether $\bar x$ 
is already AM-stationary in each direction from the generally larger set $ C(\bar x)\cap\mathbb S$, 
which would be of special interest as the implicit critical cone $\widehat C(\bar x)$ is, as the name suggests,
an implicit object and typically hard to compute in the absence of qualification conditions.
As the following example shows, this is unfortunately not the case.

\begin{example}\label{ex:AKKT_in_critical_directions}
	Consider the optimization problem
	\[
		\min\limits_x\quad x_1+x_2\quad\textup{s.t.}\quad F(x)\coloneqq x_1^2+x_2^2 \in \Gamma \coloneqq \R_-.
	\]
	The origin $\bar x \coloneqq (0,0)$ is the unique feasible point and, consequently, the global minimizer.
	One can check that we have $\widehat C(\bar x)=\{0\}\times\{0\}$ and
	$C(\bar x)=\{d\in\R^2\,|\,d_1+d_2\leq 0\}$. 
	We note that \cref{thm:locmin_dir_akkt_II} does not yield any helpful
	information in the present situation as $\widehat C(\bar x)\cap\mathbb S$ is empty.
	Pick $d\in C(\bar x)\cap\mathbb S$ arbitrarily
	and let $\{(x^k,\lambda^k,\delta^k,\varepsilon^k)\}_{k=1}^\infty\subset\R^{2+1+1+2}$ 
	be an AM-stationary sequence w.r.t.\ $\bar x$ in direction $d$.
	This requires
	\begin{align*}
		&\varepsilon^k
		-
		\begin{pmatrix}
			1 \\ 1	
		\end{pmatrix}
		=
		2\lambda^kx^k,
		\quad
		x^k\neq\bar x,
		\\
		&\lambda^k\geq 0,\quad (x_1^k)^2+(x_2^k)^2\leq\delta^k,\quad\lambda^k((x_1^k)^2+(x_2^k)^2-\delta^k)=0
	\end{align*}
	for all $k\in\N$, the convergences \eqref{eq:sequences_conv}, and \eqref{eq:approx_stat_dir}.
	Indeed, one could choose 
	\[
		x^k\coloneqq \left(-\frac{1}{2k},-\frac{1}{2k}\right),
		\quad
		\lambda^k\coloneqq k,
		\quad
		\delta^k\coloneqq\frac1{2k^2},
		\quad
		\varepsilon^k\coloneqq (0,0),
		\qquad
		\forall k\in\N
	\]
	in order to satisfy all these conditions for direction 
	$d\coloneqq-\frac{1}{\sqrt 2}(1,1)\in C(\bar x)\cap\mathbb S$.
	
	Let us now pick direction $d\coloneqq (-1,0) \in C(\bar x)\cap\mathbb S$
	and assume that there exist
	sequences which satisfy the conditions stated above.
	By \eqref{eq:sequences_dir_conv} this particularly requires the directional convergences
	\[
		\frac{x_1^k}{\norm{x^k}}\to-1,\quad\frac{x_2^k}{\norm{x^k}}\to 0.
	\]
	Note that $2\lambda^kx^k_i=\varepsilon^k_i-1\to -1$, $i=1,2$, immediately yields
	that $x^k_1$, $x^k_2$, and $\lambda^k$ cannot vanish for all $k\in\N$ large enough.
	Hence, we find
	\[
		\frac{\varepsilon^k_1-1}{2\lambda^k\norm{x^k}}=\frac{x^k_1}{\norm{x^k}}\to -1,
		\quad
		\frac{\varepsilon^k_2-1}{2\lambda^k\norm{x^k}}=\frac{x^k_2}{\norm{x^k}}\to 0.
	\]
	Due to $\varepsilon^k\to 0$, this can only be fulfilled if $\lambda^k\nnorm{x^k}\to\frac12$
	and $\lambda^k\nnorm{x^k}\to \infty$ hold simultaneously.
	However, this is clearly impossible.
	Hence, the assertion of \cref{thm:locmin_dir_akkt_II} 
	does not necessarily hold for each critical direction from $C(\bar x)\cap\mathbb S$. 
\end{example}

In \cref{ex:AKKT_in_critical_directions}
we have shown that the assertion of \cref{thm:locmin_dir_akkt_II} cannot be
extended to all directions from $C(\bar x)\cap\mathbb S$,
but we also saw therein that \emph{some} critical direction from $C(\bar x)\cap\mathbb S$
\emph{exists} such that the local minimizer $\bar x\in\R^n$ under consideration
is AM-stationary in this direction.
Below, we show by a result similar to \cref{thm:locmin_dir_akkt_II} 
that this behavior did not happen by chance.
A similar observation has been made in \cite[Corollary~4.5]{BenkoMehlitz2024b}.
However, to make this paper self-contained, we provide a proof.
Note the similarities and differences to the proof of \cref{thm:locmin_dir_akkt_II}.

\begin{theorem}\label{thm:locmin_dir_akkt}
	If $\bar{x}\in\R^n$ is a local minimizer of \eqref{eq:nonsmooth_problem}, 
	then at least one of the following assertions holds.
	\begin{enumerate}
		\item\label{item:locmin_akkt1} 
			The point $\bar{x}$ is M-stationary.
		\item\label{item:locmin_akkt2} 
		 	There exist a critical direction $d\in C(\bar x)\cap\mathbb S$ 
		 	and an AM-stationary sequence 
			$\{(x^k,\lambda^k,\delta^k,\varepsilon^k)\}_{k=1}^\infty\subset\R^{n+\ell+\ell+n}$
			w.r.t.\ $\bar x$ in direction $d$
			that fulfills $\nnorm{\lambda^k}\to\infty$
			as well as $x^k\notin X$ and $\delta^k \neq 0$ for all $k\in\N$.
	\end{enumerate} 
\end{theorem}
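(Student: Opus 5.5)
We follow the pattern of the proof of \cref{thm:locmin_dir_akkt_II}, but drop the Ekeland step along a feasible curve and localize directly at $\bar x$. Choose $\varepsilon>0$ with $f(x)\geq f(\bar x)$ for all $x\in X\cap\mathbb B_\varepsilon(\bar x)$. For $k\in\N$, consider the penalized problem
\[
	\min\limits_{x,\delta}\quad f(x)+\frac{k}{2}\nnorm{\delta}^2+\frac12\nnorm{x-\bar x}^2
	\quad\textup{s.t.}\quad F(x)-\delta\in\Gamma,\ x\in\mathbb B_\varepsilon(\bar x),\ \delta\in\mathbb B_1(0).
\]
Its feasible set is compact and nonempty, since it contains $(\bar x,0)$, so it has a global minimizer $(x^k,\delta^k)$. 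Comparing with $(\bar x,0)$ gives
\[
	f(x^k)+\tfrac{k}{2}\nnorm{\delta^k}^2+\tfrac12\nnorm{x^k-\bar x}^2\leq f(\bar x).
\]
Hence $\delta^k\to 0$. Every accumulation point $x^*$ of $\{x^k\}$ is then feasible and satisfies $f(x^*)+\frac12\nnorm{x^*-\bar x}^2\leq f(\bar x)\leq f(x^*)$, so $x^k\to\bar x$. For large $k$ the ball constraints are inactive. As in the proof of \cref{thm:locmin_dir_akkt_II}, \cite[Theorem~8.15]{RockafellarWets1998} together with \cref{lem:normals_to_perturbed_feasible_set} then yields
\[
	\bar x-x^k-\nabla f(x^k)\in\partial\langle\lambda^k,F\rangle(x^k),
	\qquad
	\lambda^k\coloneqq k\delta^k\in N_\Gamma(F(x^k)-\delta^k).
\]
With $\varepsilon^k\coloneqq\bar x-x^k\to0$, this is an AM-stationary sequence, and by construction $\nnorm{\delta^k}\lambda^k=\nnorm{\lambda^k}\delta^k$.

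We then split into cases. If $\{\lambda^k\}$ has a bounded subsequence, \cref{lem:taking_the_limit_in_approximate_conditions} gives M-stationarity, which is assertion~\ref{item:locmin_akkt1}. Otherwise $\nnorm{\lambda^k}\to\infty$, which forces $\delta^k\neq0$. It also forces $x^k\notin X$: if $x^k\in X$, then $(x^k,0)$ would be feasible with strictly smaller objective, contradicting optimality. Since $x^k\notin X$, in particular $x^k\neq\bar x$, and after passing to a subsequence $(x^k-\bar x)/\nnorm{x^k-\bar x}\to d\in\mathbb S$. We have already observed after \cref{def:approx_stat_sequence_dir} that $d\in T^\textup{lin}_{F,\Gamma}(\bar x)$ follows from \eqref{eq:suitable_repr_of_constraint_viol} as soon as $\delta^k/\nnorm{x^k-\bar x}\to0$. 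Criticality $f'(\bar x)d\leq0$ comes from $f(x^k)-f(\bar x)\leq-\frac12\nnorm{x^k-\bar x}^2<0$ after dividing by $\nnorm{x^k-\bar x}$ and letting $k\to\infty$. This gives $d\in C(\bar x)\cap\mathbb S$.

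The main obstacle is the remaining directional conditions: $\delta^k/\nnorm{x^k-\bar x}\to0$ and boundedness of $\nnorm{\delta^k}\nnorm{\lambda^k}/\nnorm{x^k-\bar x}=k\nnorm{\delta^k}^2/\nnorm{x^k-\bar x}$. The comparison inequality, combined with $f(\bar x)-f(x^k)\leq L\nnorm{x^k-\bar x}$ for a local Lipschitz constant $L$ of $f$, gives
\[
	\frac{k}{2}\nnorm{\delta^k}^2\leq L\nnorm{x^k-\bar x},
\]
so the ratio is bounded. This settles \eqref{eq:multipliers_not_arbitrarily_unbounded}, and \eqref{eq:perturbation_vs_multiplier} holds trivially. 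For the first condition, note that $k\nnorm{\delta^k}^2/\nnorm{x^k-\bar x}$ bounded implies
\[
	\frac{\nnorm{\delta^k}}{\nnorm{x^k-\bar x}}\leq\sqrt{\frac{C}{k\nnorm{x^k-\bar x}}}.
\]
Thus it suffices to have $k\nnorm{x^k-\bar x}\to\infty$, and this is the delicate point. If it fails, I would modify the penalty to $\frac{\mu_k}{2}\nnorm{\delta}^2$ with $\mu_k\to\infty$ chosen fast relative to $\nnorm{x^k-\bar x}$, or re-scale using the bound $\nnorm{\delta^k}\leq\sqrt{2L\nnorm{x^k-\bar x}/k}$ and iterate. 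Alternatively, I would use the Lipschitz estimate $\nnorm{\lambda^k}\nnorm{\delta^k}\leq 2L\nnorm{x^k-\bar x}$ together with $\nnorm{\lambda^k}\to\infty$, which directly gives
\[
	\frac{\nnorm{\delta^k}}{\nnorm{x^k-\bar x}}\leq\frac{2L}{\nnorm{\lambda^k}}\to0,
\]
and this in fact already resolves the issue.
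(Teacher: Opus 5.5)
Your proposal is correct and is essentially the paper's proof. You use the same penalty problem, the same choices $\lambda^k=k\delta^k$ and $\varepsilon^k=\bar x-x^k$, and your Lipschitz bound $\nnorm{\delta^k}\nnorm{\lambda^k}\leq 2L\nnorm{x^k-\bar x}$ is what yields both \eqref{eq:multipliers_not_arbitrarily_unbounded} and, once $\nnorm{\lambda^k}\to\infty$, the convergence $\delta^k/\nnorm{x^k-\bar x}\to0$ (the paper argues this step contrapositively via \cref{lem:taking_the_limit_in_approximate_conditions}), so the speculative detours about modifying or rescaling the penalty should simply be deleted.
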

\begin{proof}
	The assumptions of the theorem guarantee that there is some $\varepsilon>0$ such that
	$f(x)\geq f(\bar x)$ holds for all $x\in X\cap \mathbb B_\varepsilon(\bar x)$.
	For $k\in\N$, we now consider the surrogate problem
	\begin{equation}\label{eq:locmin_prob}\tag{P$(k)$}
		\begin{aligned}
		&\min\limits_{x,\delta}&	&f(x)+\frac{k}{2}\norm{\delta}^2 + \frac12\norm{x-\bar x}^2&	 
		\\
		&\text{\,s.t.}&				&F(x)-\delta\in \Gamma,&
		\\
		&&							&x\in \mathbb B_\varepsilon(\bar x),\,\delta\in \mathbb B_1(0).
	\end{aligned}
	\end{equation}
	Obviously, $(\bar x,0)$ is feasible for \eqref{eq:locmin_prob} for all $k\in\N$.
	Together with continuity of $F$ as well as closedness of $\Gamma$, this yields
	that the feasible set of \eqref{eq:locmin_prob} is nonempty and compact for all $k\in\N$.
	As the objective function of \eqref{eq:locmin_prob} is continuous as well,
	the latter admits a global minimizer $(x^k,\delta^k)$ for all $k\in\N$.
	Noting that $\{x^k\}_{k=1}^\infty$ remains
	bounded by definition,
	we can take a subsequence (without relabeling) in order to find $x^\star\in \mathbb B_\varepsilon(\bar x)$
	such that $x^k\to x^\star$.

	Recalling that $(\bar x,0)$ is feasible for \eqref{eq:locmin_prob} for each $k\in\N$,
	we obtain
	\begin{equation}
		\label{eq:locmin}
			f(x^k)+\frac{k}{2}\nnorm{\delta^k}^2+\frac12\nnorm{x^k-\bar x}^2
			\leq
			f(\bar x),
			\qquad
			\forall k\in\N
	\end{equation}
	by global optimality of $(x^k,\delta^k)$.
	By continuity of $f$, $\{f(x^k)\}_{k=1}^\infty$ is bounded, so that
	\[
		\nnorm{\delta^k}^2
		\leq
		\frac{2}{k}(f(\bar x)-f(x^k)),
		\qquad
		\forall k\in\N
	\]
	yields $\delta^k\to 0$.
	Hence, from $F(x^k)-\delta^k\in \Gamma$ for each $k\in\N$ and closedness of $\Gamma$, we find
	$F(x^\star)\in \Gamma$, i.e., $x^\star$ is feasible for \eqref{eq:nonsmooth_problem}.
	Thus, $x^\star\in \mathbb B_\varepsilon(\bar x)$ yields $f(\bar x)\leq f(x^\star)$.
	Exploiting \eqref{eq:locmin} once more, we find
	\begin{align*}
		f(\bar x)
		\leq 
		f(x^\star)
		\leq
		f(x^\star) + \frac12\nnorm{x^\star-\bar x}^2
		=
		\lim\limits_{k\to\infty}\left(f(x^k)+\frac12\nnorm{x^k-\bar x}^2\right)
		\leq
		f(\bar x),
	\end{align*}
	and $x^\star = \bar x$ follows.
	Particularly, this yields $x^k\to\bar x$.

	Let us assume, by considering the tail of the sequences if necessary,
	that $\nnorm{x^k-\bar x}<\varepsilon$ and $\nnorm{\delta^k}<1$ hold for all $k\in\N$.
	Recalling that $(x^k,\delta^k)$ is a global minimizer of \eqref{eq:locmin_prob} for all $k\in\N$,
	\cref{prop:dual_fo_conditions}\,\ref{item:dual_fo_cond_regular} and
	\cref{lem:normals_to_perturbed_feasible_set}
	yield
	\begin{align*}
		\bar x-x^k-\nabla f(x^k)&\in \partial\langle k\delta^k,F\rangle(x^k),
		\\
		k\delta^k&\in N_\Gamma(F(x^k)-\delta^k)
	\end{align*}
	for each $k\in\N$.
	Hence, defining 
	\begin{equation}\label{eq:settings_for_directional_stationarity}
		\lambda^k\coloneqq k\delta^k,\quad\varepsilon^k\coloneqq \bar x - x^k,
		\qquad\forall k\in\N
	\end{equation}
	yields validity of \eqref{eq:approximate_statonarity} for each $k\in\N$.
	Furthermore, $\varepsilon^k\to 0$ holds true as $x^k\to\bar x$.
	Hence, \eqref{eq:sequences_conv} is verified.
	
	Suppose that the sequence element $x^{k_0}$ is feasible for \eqref{eq:nonsmooth_problem} for some $k_0\in\N$.
	Then $f(\bar x)\leq f(x^{k_0})$ follows, and, together with \eqref{eq:locmin},
	$x^{k_0}=\bar x$ and $\delta^{k_0}=0$ are obtained.
	Particularly, \eqref{eq:approximate_statonarity} reduces to
	\[
		-\nabla f(\bar x)\in\partial\langle\lambda^{k_0},F\rangle(\bar x),\quad\lambda^{k_0}\in N_\Gamma(F(\bar x))
	\]
	for $k_0$, where we applied \eqref{eq:settings_for_directional_stationarity}.
	Thus, $\bar x$ is an M-stationary point of \eqref{eq:nonsmooth_problem} with multiplier $\lambda^{k_0}$,
	such that~\ref{item:locmin_akkt1} is verified.
	
	Consequently, we may assume that $x^k$ is not feasible for \eqref{eq:nonsmooth_problem} for all $k\in\N$.
	This, particularly, yields $x^k\neq\bar x$ and $\delta^k\neq 0$ for each $k\in\N$.
	Without loss of generality, let us assume that $\{(x^k-\bar x)/\nnorm{x^k-\bar x}\}_{k=1}^\infty$
	converges to some $d\in\mathbb S$.
	
	From \eqref{eq:locmin} and \eqref{eq:settings_for_directional_stationarity} we find
	\begin{equation}\label{eq:bounded_difference_quotient}
		\frac{f(x^k)-f(\bar x)}{\nnorm{x^k-\bar x}}
		+
		\frac12\frac{\nnorm{\delta^k}\nnorm{\lambda^k}}{\nnorm{x^k-\bar x}}
		\leq 0,
		\qquad\forall k\in\N.
	\end{equation}
	As the first summand converges to $f'(\bar x)d$ while the second one is nonnegative,
	the sequence
	$\{\nnorm{\delta^k}\nnorm{\lambda^k}/\nnorm{x^k-\bar x}\}_{k=1}^\infty$ needs to be bounded,
	i.e., \eqref{eq:multipliers_not_arbitrarily_unbounded} holds.
	Suppose that $\{\delta^k/\nnorm{x^k-\bar x}\}_{k=1}^\infty$ does not converge to zero.
	Then, along a subsequence, it is bounded away from zero.
	Consequently, on the same subsequence, $\{\lambda^k\}_{k=1}^\infty$ has to remain bounded.
	Applying \cref{lem:taking_the_limit_in_approximate_conditions} to the corresponding subsequence
	of $\{ (x^k, \lambda^k, \delta^k, \varepsilon^k) \}_{k=1}^\infty$ then shows M-stationarity of $\bar x$,
	such that situation~\ref{item:locmin_akkt1} is at hand.
	
	Thus, we may assume $\delta^k/\nnorm{x^k-\bar x}\to 0$ in the remainder of the proof, 
	i.e., all convergences from \eqref{eq:sequences_dir_conv} hold.
	Furthermore, we may also assume $\nnorm{\lambda^k}\to\infty$, as otherwise 
	we end up with M-stationarity of $\bar x$ via 
	\cref{lem:taking_the_limit_in_approximate_conditions}  again.
	As $\delta^k/\nnorm{\delta^k}=\lambda^k/\nnorm{\lambda^k}$ holds for all $k\in\N$ 
	by \eqref{eq:settings_for_directional_stationarity}, 
	we also have \eqref{eq:perturbation_vs_multiplier}.
	
	Let us verify $d\in C(\bar x)$.
	To this end, we first make use of \eqref{eq:bounded_difference_quotient} again in order to find
	\[
		f'(\bar x)d=\lim\limits_{k\to\infty}\frac{f(x^k)-f(\bar x)}{\nnorm{x^k-\bar x}}\leq 0.
	\]
	Furthermore, for each $k\in\N$, we have
	\[
		F(\bar x)
		+
		\nnorm{x^k-\bar x}
			\left(
				\frac{F(x^k)-F(\bar x)}{\nnorm{x^k-\bar x}}-\frac{\delta^k}{\nnorm{x^k-\bar x}}
			\right)
		=
		F(x^k)-\delta^k \in \Gamma,
	\]
	and as $(F(x^k)-F(\bar x))/\nnorm{x^k-\bar x}\to F'(\bar x;d)$ holds
	while $\delta^k/\nnorm{x^k-\bar x}\to 0$ follows from \eqref{eq:sequences_dir_conv},
	the definition of the tangent cone yields $F'(\bar x;d)\in T_\Gamma(F(\bar x))$.
	Thus, we have proven that assertion \ref{item:locmin_akkt2} holds true.
\end{proof}

Thus, whenever M-stationarity fails at some $\bar x\in\R^n$, 
\cref{thm:locmin_dir_akkt}~\ref{item:locmin_akkt2} now provides some insight into what we 
can expect to find instead.
Additionally, let us note that it is generally also possible that assertions \ref{item:locmin_akkt1} 
and \ref{item:locmin_akkt2} of \cref{thm:locmin_dir_akkt} hold at the same time.
In particular, if MSCQ$(d)$ holds at $\bar x$ for each critical direction 
$d\in C(\bar x)\cap\mathbb S \neq \emptyset$, then we know from 
\cref{cor:MStat_via_MS,thm:locmin_dir_subreg} 
that $\bar{x}$ is M-stationary in each of these critical directions, such that
\cref{thm:locmin_dir_akkt}~\ref{item:locmin_akkt1} is clearly valid. However, even in 
this case, it is possible to fulfill
\cref{thm:locmin_dir_akkt}~\ref{item:locmin_akkt2} at the same time, as we have seen in 
\cref{ex:mscq_not_enough_for_boundedness}. 
Moreover, certain regularity conditions can be used to guarantee that 
\cref{thm:locmin_dir_akkt}~\ref{item:locmin_akkt2} cannot hold without 
\cref{thm:locmin_dir_akkt}~\ref{item:locmin_akkt1}, as we will see later.

Next, we show by an example how \cref{thm:locmin_dir_akkt} can be applied whenever
only one assertion holds.
\begin{example}\label{ex:how_to_apply_dir_AKKT}
	Consider the optimization problem
	\[
		\min\limits_x\quad x\quad\textup{s.t.}\quad F(x)\coloneqq x \in \Gamma \coloneqq \{ 0 \}.
	\]	
	Clearly, its global minimizer is the unique feasible point $\bar{x} \coloneqq 0$.
	One can easily compute $T^\textup{lin}_{F,\Gamma}(\bar x) = C(\bar{x}) = \{ 0 \}$, 
	such that $C(\bar{x})\cap\mathbb{S} = \emptyset$ follows. 
	Consequently, assertion \ref{item:locmin_akkt2} of \cref{thm:locmin_dir_akkt}
	is violated, and, hence, assertion \ref{item:locmin_akkt1} yields that $\bar{x}$
	must be M-stationary. 
	Indeed, the M-stationarity conditions \eqref{eq:Mst} at $\bar{x}$ demand
	$-1= \lambda $ for some $\lambda\in \R$ and, thus,
	can be fulfilled using the multiplier $\lambda = -1$.
	
	Returning to the optimization problem 
	\[
		\min\limits_x\quad x_1+x_2\quad\textup{s.t.}\quad 
		F(x)\coloneqq x_1^2+x_2^2\in\Gamma\coloneqq\R_-
	\]
	from \cref{ex:AKKT_in_critical_directions} with the global minimizer $\bar{x}\coloneqq (0,0)$,
	the corresponding M-stationarity conditions \eqref{eq:Mst} at $\bar{x}$ require
	$(-1,-1)= 2\lambda \cdot 0$ for some $\lambda\geq 0$ and cannot be fulfilled. 
	Thus, assertion \ref{item:locmin_akkt1} of \cref{thm:locmin_dir_akkt}
	is violated, and it follows that assertion \ref{item:locmin_akkt2} holds true.
	Indeed, a critical direction $d\in C(\bar x) \cap \mathbb S$ and an associated 
	AM-stationary sequence w.r.t.\ $\bar x$ in direction $d$
	fulfilling \ref{item:locmin_akkt2} of \cref{thm:locmin_dir_akkt} are provided in 
	\cref{ex:AKKT_in_critical_directions}.
\end{example}

\cref{thm:locmin_dir_akkt} and \cref{ex:how_to_apply_dir_AKKT} give rise to the following result.

\begin{corollary}\label{cor:trivial_critical_cone}
	Let $\bar x\in\R^n$ be feasible for \eqref{eq:nonsmooth_problem}
	such that $C(\bar x)=\{0\}$.
	Then $\bar x$ is a strict local minimizer of \eqref{eq:nonsmooth_problem}
	which is M-stationary.
\end{corollary}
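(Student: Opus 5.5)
Both claims follow by combining results already established in the paper.

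First, strict local minimality. Since $\widehat C(\bar x)\subseteq C(\bar x)$ holds for every feasible point, $C(\bar x)=\{0\}$ gives $\widehat C(\bar x)=\{0\}$. Problem \eqref{eq:nonsmooth_problem} is an instance of \eqref{eq:elementary_P} with $X=F^{-1}(\Gamma)$. This set is closed because $F$ is continuous and $\Gamma$ is closed, and it is nonempty because it contains $\bar x$. Hence \cref{cor:primal_fo_conditions_suf} applies directly, and $\bar x$ is a strict local minimizer of \eqref{eq:nonsmooth_problem}.

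Second, M-stationarity. Since $\bar x$ is in particular a local minimizer, \cref{thm:locmin_dir_akkt} yields that assertion \ref{item:locmin_akkt1} or \ref{item:locmin_akkt2} holds. Assertion \ref{item:locmin_akkt2} requires a direction $d\in C(\bar x)\cap\mathbb S$. This set is empty because $C(\bar x)=\{0\}$ and $0\notin\mathbb S$. So assertion \ref{item:locmin_akkt2} fails, assertion \ref{item:locmin_akkt1} must hold, and $\bar x$ is M-stationary. This is exactly the reasoning from the first part of \cref{ex:how_to_apply_dir_AKKT}.

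There is no real obstacle. The only point to check is that the hypotheses of \cref{cor:primal_fo_conditions_suf} are met, namely that $f$ is continuously differentiable and $X$ is closed and nonempty. Both hold under the standing assumptions on \eqref{eq:nonsmooth_problem}.
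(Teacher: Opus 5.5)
Your proposal is correct and follows the paper's proof exactly: strict local minimality from $\widehat C(\bar x)\subset C(\bar x)$ together with \cref{cor:primal_fo_conditions_suf}, and M-stationarity from \cref{thm:locmin_dir_akkt} because $C(\bar x)\cap\mathbb S=\emptyset$ rules out its second alternative.
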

\begin{proof}	
	Because of the relation $\widehat C(\bar x)\subset C(\bar x)$,
	the fact that $\bar x$ is a strict local minimizer follows
	from \cref{cor:primal_fo_conditions_suf}.
	As $\bar x$ is, particularly, a local minimizer of \eqref{eq:nonsmooth_problem}, 
	its M-stationarity follows from \cref{thm:locmin_dir_akkt}
	as $C(\bar x)\cap\mathbb S=\emptyset$.
\end{proof}

Let us note that, in \cref{thm:locmin_dir_akkt_II,thm:locmin_dir_akkt},
the constructed AM-stationary sequences in critical directions possess
certain additional properties. 
In this regard, \cref{def:approx_stat_sequence_dir} is a compromise.
There, on the one hand, 
we do not require $\nnorm{\delta^k}\nnorm{\lambda^k}/\nnorm{x^k-\bar x}\to 0$,
which is shown in \cref{thm:locmin_dir_akkt_II}.
On the other hand, the
conditions $\nnorm{\lambda^k}\to\infty$, $x^k\notin X$
for all $k\in\N$, and $\delta^k \neq 0$ for all $k\in\N$,
which occur in \cref{thm:locmin_dir_akkt}, 
are also excluded.

\subsection{Approximate constraint qualifications}\label{sec:approx_CQs}

Noting that the proofs of \cref{thm:locmin_dir_akkt_II,thm:locmin_dir_akkt}
are utilizing an external penalty approach, 
we claim that these results may possess applications in numerical optimization.
Typically, several optimization algorithms tend to produce accumulation points
which are AM-stationary, see, e.g., 
\cite[Section~5.2]{AndreaniHaeserSchuverdtSilva2012}, 
\cite[Theorem~4.3]{JiaKanzowMehlitzWachsmuth2023},
and
\cite[Section~5]{Ramos2021}.
\cref{thm:locmin_dir_akkt} hints that the characterization of these accumulation
points might be refined such that they are either M-stationary,
AM-stationary in a critical direction, or even both. 
Then, by employing a suitable qualification condition,
one may even guarantee (directional) M-stationarity 
of the accumulation point under consideration.
To this end, we introduce the following approximate (directional) 
constraint qualifications, which parallel those defined in 
\cite[Definition 5.1]{BenkoMehlitz2024b}.

\begin{definition}\label{def:dir_AM_reg}
	Let $\bar x\in\R^n$ be feasible for \eqref{eq:nonsmooth_problem},
	and fix $d\in T^\textup{lin}_{F,\Gamma}(\bar x)\cap\mathbb S$.
	\begin{enumerate}
		\item We say that the point $\bar x$ is approximately M-regular (AM-regular)
			whenever for each sequence 
			$\{(x^k,\lambda^k,\delta^k,\xi^k)\}_{k=1}^\infty\subseteq\R^{n+\ell+\ell+n}$ 
			and $\xi\in\R^n$ satisfying
			\begin{equation}\label{eq:dir_AM_reg_stat}
				\xi^k\in\partial\langle\lambda^k,F\rangle(x^k),
				\qquad
				\lambda^k\in N_\Gamma(F(x^k)-\delta^k)
			\end{equation}
			for all $k\in\N$ and the convergences
			\begin{equation}\label{eq:dir_AM_reg_conv}
				(x^k,\delta^k,\xi^k,\nnorm{\lambda^k})\to(\bar x,0,\xi,\infty),
			\end{equation}
			the relation $\xi\in\partial\langle \lambda,F\rangle(\bar x)$ 
			is valid for some $\lambda\in N_\Gamma(F(\bar x))$.
		\item 
			We say that the point $\bar x$ is AM-regular in direction $d$
			whenever for each sequence 
			$\{(x^k,\lambda^k,\delta^k,\xi^k)\}_{k=1}^\infty\subseteq\R^{n+\ell+\ell+n}$ 
			and $\xi\in\R^n$ satisfying
			\eqref{eq:dir_AM_reg_stat} and $x^k\neq\bar x$ for all $k\in\N$,
			the convergences \eqref{eq:dir_AM_reg_conv}, and \eqref{eq:approx_stat_dir},  
			the relation $\xi\in\partial\langle \lambda,F\rangle(\bar x)$ is valid for some 
			$\lambda\in N_\Gamma(F(\bar x))$.
		\item We say that the point $\bar x$ is strongly AM-regular in direction $d$
			whenever for each sequence 
			$\{(x^k,\lambda^k,\delta^k,\xi^k)\}_{k=1}^\infty\subseteq\R^{n+\ell+\ell+n}$ 
			and $\xi\in\R^n$ satisfying
			\eqref{eq:dir_AM_reg_stat} and $x^k\neq\bar x$ for all $k\in\N$,
			the convergences \eqref{eq:dir_AM_reg_conv}, and \eqref{eq:approx_stat_dir}, 
			the relation $\xi\in\partial\langle \lambda,F\rangle(\bar x;d)$ is valid for some 
			$\lambda\in N_\Gamma(F(\bar x);F'(\bar x;d))$.		 
	\end{enumerate}
\end{definition}

Let us note that postulating $\nnorm{\lambda^k}\to\infty$ in \cref{def:dir_AM_reg}
is not restrictive.
Indeed, if $\{\lambda^k\}_{k=1}^\infty$ possesses a bounded subsequence,
then, along yet another subsequence, the latter would converge to some $\lambda\in\R^\ell$,
and one could simply take the limit in \eqref{eq:dir_AM_reg_stat}
to obtain the desired relations 
via \cref{lem:robustness_of_subdifferential_scalarization_function} and the robustness
of the (directional) limiting normal cone.

\begin{remark}\label{rem:dir_CQs}
	In contrast to \cite[Definition~5.1]{BenkoMehlitz2024b},
	we do not claim $x^k\notin X$ and $\delta^k\neq 0$ for each $k\in\N$
	in the definition of (strong) directional AM-regularity,
	leading to slightly more restrictive conditions than
	those introduced in \cite{BenkoMehlitz2024b}.
	This adjustment became necessary as we are working with a less tight
	notion of directional AM-stationarity than the one that
	could have been distilled from \cref{thm:locmin_dir_akkt} (which would equal the one
	used in \cite[Corollary~4.5]{BenkoMehlitz2024b})
	in order to cover our findings from \cref{thm:locmin_dir_akkt_II} as well,
	see the remark below \cref{cor:trivial_critical_cone} again.
	However, let us note that these minor differences between
	\cref{def:dir_AM_reg} and \cite[Definition~5.1]{BenkoMehlitz2024b}
	are negligible in the sense that the results from \cite[Section~5]{BenkoMehlitz2024b}
	carry over to the concepts from \cref{def:dir_AM_reg} via straightforward adjustments.
\end{remark}

Some obvious relations between the constraint qualifications from \cref{def:dir_AM_reg} 
are listed in the following remark,
see \cite[Remark~5.1]{BenkoMehlitz2024b} as well.
\begin{remark}\label{rem:relations_dir_CQ}
	Let $\bar x\in\R^n$ be feasible for \eqref{eq:nonsmooth_problem}.
	Then the following assertions hold.
	\begin{enumerate}
		\item If $\bar x$ is AM-regular, 
			then $\bar x$ is AM-regular in direction $d$ for all 
			$d\in T^\textup{lin}_{F,\Gamma}(\bar x)\cap\mathbb S$.
		\item\label{item:relations_dir_CQ_b} Fix $d\in T^\textup{lin}_{F,\Gamma}(\bar x)\cap\mathbb S$.
			If $\bar x$ is strongly AM-regular in direction $d$,
			then $\bar x$ is AM-regular in direction $d$.
	\end{enumerate}
\end{remark}

In the following remark, we point out that AM-regularity and strong AM-regularity in 
\textit{all} unit directions from the linearization cone are independent conditions.

\begin{remark}\label{rem:relations_to_other_CQs}
	Given a set-valued mapping $\Phi\colon\R^n\tto\R^m$,
	the constraint system $0\in\Phi(x)$, considered in \cite{BenkoMehlitz2024b},
	can equivalently be written as $(x,0)\in\gph\Phi$.
	Thus, setting $F(x)\coloneqq(x,0)$ and $\Gamma\coloneqq\gph\Phi$,
	it is covered by the setting considered here.
	In particular, \cite[Examples~5.1,~5.2]{BenkoMehlitz2024b} show that,
	given some point $\bar x\in\R^n$ feasible for \eqref{eq:nonsmooth_problem},
	AM-regularity of $\bar x$ and strong AM-regularity of $\bar x$ in all directions
	from $T^\textup{lin}_{F,\Gamma}(\bar x)\cap\mathbb S$ are independent conditions. 
\end{remark}

Thus, even if strong AM-regularity of $\bar x\in\R^n$ holds in all directions from 
$T^\textup{lin}_{F,\Gamma}(\bar x)\cap\mathbb S$, the point $\bar{x}$ is not necessarily AM-regular.
On the opposite, AM-regularity of $\bar{x}$ does not imply strong AM-regularity of 
$\bar x$ in \textit{all} (but possibly some) directions 
from $T^\textup{lin}_{F,\Gamma}(\bar x)\cap\mathbb S$. Indeed, in \cite[Example~5.1]{BenkoMehlitz2024b},
where $\bar x\coloneqq 0$ is AM-regular but not strongly AM-regular in direction 
$d\coloneqq 1\in T^\textup{lin}_{F,\Gamma}(\bar x)\cap\mathbb S$, one can verify that $\bar x$ is at least 
strongly AM-regular in direction $d\coloneqq -1 \in T^\textup{lin}_{F,\Gamma}(\bar x)\cap\mathbb S$.

In what follows, we prove that the latter result can further be strengthened.
Indeed, the following example shows that AM-regularity of $\bar{x}\in\R^n$ generally
does not imply strong AM-regularity of $\bar x$ in \textit{any} direction 
from $T^\textup{lin}_{F,\Gamma}(\bar x)\cap\mathbb S$.

\begin{example}\label{ex:AM_reg_vs_strong_dir_AM_reg}
	Consider $F\colon\R\to\R^2$ given by $F(x)\coloneqq(x,0)$, $x\in\R$, 
	and $\Gamma\subseteq\R^2$ given by
	\[
		\Gamma \coloneqq \{y\in\R^2\,|\,y_2\geq y_1^2\}\cup(\{0\}\times\R_-).
	\]
	Let us consider the unique feasible point $\bar x\coloneqq 0$.
	As we have
	\begin{align*}
		T_\Gamma(F(\bar x))
		&=
		(\R\times\R_+)\cup(\{0\}\times\R_-),
		\\
		N_\Gamma(F(\bar x))
		&=
		(\R\times\{0\})\cup(\{0\}\times\R_-),
	\end{align*}
	we find $T_{F,\Gamma}^\textup{lin}(\bar x)=\R$
	and 
	\[
		F'(\bar x)^\top N_\Gamma(F(\bar x))
		=
		\R,
	\]
	so $\bar x$ is trivially AM-regular
	and AM-regular in directions $d=\pm 1\in T^\textup{lin}_{F,\Gamma}(\bar x)\cap\mathbb S$.
	
	Let us show that strong AM-regularity of $\bar x$ in directions $d=\pm 1$ fails.
	To this end, we first consider $d\coloneqq 1$
	and investigate the sequence 
	$\{(x^k,\lambda^k,\delta^k,\xi^k)\}_{k=1}^\infty\subseteq\R^{1+2+2+1}$
	given by
	\[
		x^k\coloneqq \frac1k,
		\quad
		\lambda^k\coloneqq(2,-k),
		\quad
		\delta^k\coloneqq\left(0,-\frac1{k^2}\right),
		\quad
		\xi^k\coloneqq 2
		,
		\qquad
		\forall k\in\N.
	\]
	Then the convergences 
	\eqref{eq:dir_AM_reg_conv} hold for $\xi\coloneqq 2$,
	while $x^k\neq\bar x$ for all $k\in\N$ and \eqref{eq:approx_stat_dir} are also satisfied. 
	For all $k\in\N$, we find $\partial \langle \lambda^k, F\rangle (x^k) = \{ 2 \}$ and
	\[
		N_\Gamma(F(x^k)-\delta^k) 
		= 
		N_\Gamma\left(\left(\frac1k,\frac{1}{k^2}\right)\right)
		=
		\left\{ \left(2\alpha, -k\alpha \right) \in\R^2 \mid \alpha \geq 0 \right\},
	\]
	such that \eqref{eq:dir_AM_reg_stat} is valid for all $k\in\N$ as well.
	Due to the relation $N_\Gamma(F(\bar x);F'(\bar x)d)=\{0\}\times\R_-$ and, thus,
	\[
		F'(\bar x)^\top N_\Gamma(F(\bar x);F'(\bar x)d)
		=
		\{0\},
	\]
	it is clear that $\xi=2$ is not contained in the latter set, such that 
	$\bar x$ cannot be strongly AM-regular in direction $d$.
	Similarly, using the sequence 
	$\{(x^k,\lambda^k,\delta^k,\xi^k)\}_{k=1}^\infty\subseteq\R^{1+2+2+1}$
	given by
	\[
		x^k\coloneqq -\frac1k,
		\quad
		\lambda^k\coloneqq(-2,-k),
		\quad
		\delta^k\coloneqq\left(0,-\frac1{k^2}\right),
		\quad
		\xi^k\coloneqq -2
		,
		\qquad
		\forall k\in\N
	\]
	and $\xi \coloneqq -2$,
	one can verify that $\bar x$ is not strongly AM-regular 
	in direction $d\coloneqq -1$.
\end{example}

Let us comment on further relations of the conditions from \cref{def:dir_AM_reg}
to popular qualification conditions.
Fix a feasible point $\bar x\in\R^n$ of \eqref{eq:nonsmooth_problem}.
Then validity of NNAMCQ at $\bar x$ yields validity of AM-regularity of $\bar x$,
see e.g.\ \cite[Corollary~3.13]{Mehlitz2020b}.
Furthermore, given
$d\in T^\textup{lin}_{F,\Gamma}(\bar x)\cap\mathbb S$,
\cref{lem:robustness_of_subdifferential_scalarization_function} 
can be used to show that validity of FOSCMS$(d)$ at $\bar x$ implies that
the latter is also strongly AM-regular in direction $d$.
Further relations to other qualification conditions can be obtained 
with similar arguments as in \cite[Section~5]{BenkoMehlitz2024b}
and illustrate that the concepts from \cref{def:dir_AM_reg}
are comparatively mild.
Particularly, it has been indicated in \cite{BenkoMehlitz2024b,Mehlitz2020b}
that these concepts are not related to (directional) MSCQ.

Next, we want to illustrate by the following two examples that AM-regularity 
and directional AM-regularity are independent of GGCQ.
Let us note that GGCQ has not been taken into account in \cite[Section~5]{BenkoMehlitz2024b}.

\begin{example}\label{ex:AMreg_but_not_GGCQ}
	We again consider the feasible set from \cref{ex:AM_reg_vs_strong_dir_AM_reg}, where we have seen
	that the unique feasible point $\bar x\coloneqq 0$ fulfills AM-regularity as well as
	AM-regularity in directions $d=\pm 1$.
	We further computed $T_{F,\Gamma}^\textup{lin}(\bar x)=\R$, and as $T_X(\bar x) = \{ 0 \}$ 
	trivially holds, we find $\widehat{N}_X(\bar x)=\R$ and 
	$T_{F,\Gamma}^\textup{lin}(\bar x)^\circ = \{ 0 \}$. Thus, GGCQ is violated at $\bar x$.
\end{example}

We emphasize that for standard nonlinear optimization problems,
AM-regularity reduces to the so-called cone-continuity property,
and the latter has been shown to imply GACQ in \cite[Theorem 4.4]{AndreaniMartinezRamosSilva2016}.
Thus, the situation in \cref{ex:AMreg_but_not_GGCQ} has to be fully attributed
to the variationally complex structure of $\Gamma$.

\begin{example}\label{ex:GGCQ_but_not_AMreg}
	Recall the optimization problem from \cref{ex:GGCQ_without_MSCQ}, for which we have shown
	that the global minimizer $\bar x \coloneqq (0,0)$ fulfills GGCQ.
	We further computed $T^\textup{lin}_{F,\Gamma}(\bar x) = \R_+ \times \R_+$.
	To show that AM-regularity of $\bar x$ in direction 
	$d\coloneqq (0,1) \in T^\textup{lin}_{F,\Gamma}(\bar x)\cap\mathbb S$ 
	is violated, we pick the sequence
	$\{ (x^k,\lambda^k,\delta^k,\xi^k) \}_{k=1}^\infty\subset\R^{2+3+3+2}$ with
	\[
		x^k \coloneqq \left(\frac{1}{k^2},\frac{1}{k}\right),
		\quad
		\lambda^k \coloneqq (0,0,k),
		\quad
		\delta^k \coloneqq \left(0,0,\frac{1}{k^3}\right),
		\quad
		\xi^k \coloneqq \left(1, \frac{1}{k}\right),
		\qquad
		\forall k\in\N,
	\]
	which fulfills $x^k\neq \bar x$ for all $k\in \N$, the convergences 
	\eqref{eq:dir_AM_reg_conv} for $\xi \coloneqq (1,0)$, and \eqref{eq:approx_stat_dir}.
	The computations
	\begin{align*}
		\partial\langle \lambda^k,F\rangle(x^k) 
		&= 
		\left\{
		k 
		\begin{pmatrix}
		\frac{1}{k} \\ \frac{1}{k^2}
		\end{pmatrix}
		\right\}
		= 
		\left\{
		\begin{pmatrix}
		1 \\ \frac{1}{k}
		\end{pmatrix}
		\right\},
		\\
		N_\Gamma(F(x^k)-\delta^k) 
		&= 
		N_\Gamma\left(\left(-\frac{1}{k^2},-\frac{1}{k},0\right)\right) 
		= 
		\{ 0 \} \times \{ 0 \} \times \R
	\end{align*}
	yield that \eqref{eq:dir_AM_reg_stat} is satisfied for all $k\in\N$ as well.
	Thus, validity of AM-regularity of $\bar x$ in direction $d$ requires that 
	$\xi = (1,0)$ is contained in
	\begin{align*}
		F'(\bar x)^\top N_\Gamma(F(\bar x))
		=
		\{ (-\lambda_1,-\lambda_2)\mid \lambda_1,\lambda_2 \in \R_+,\lambda_3 \in \R\}
		= 
		\R_-\times \R_-,
	\end{align*}
	which is not fulfilled. Hence, $\bar x$ is not AM-regular in direction $d$ and, consequently, 
	not AM-regular, see \cref{rem:relations_dir_CQ}.
\end{example}

Moreover, we can also show that strong directional AM-regularity and GGCQ are independent conditions.
The fact that GGCQ can hold while strong directional AM-regularity is violated is already clear
from \cref{ex:GGCQ_but_not_AMreg} and \cref{rem:relations_dir_CQ}\,\ref{item:relations_dir_CQ_b}.
The opposite direction follows from the upcoming example, which, together with \cref{ex:GGCQ_without_MSCQ},
also shows that MSCQ$(d)$ and GGCQ are independent conditions.

\begin{example}\label{ex:dirMSCQ_without_GGCQ}
	Consider the constraint system
	\[
		F(x)\coloneqq(\max(x_1^2,x_1)-x_2,\min(x_1^2,-x_1)+x_2)\in\Gamma\coloneqq\R_-\times\R_-
	\]
	and the feasible point $\bar x\coloneqq(0,0)$.
	We find $X=\conv\{(0,0),(1,1)\}$ as well as
	\[
		T_X(\bar x)=\cone\{(1,1)\},
		\quad
		T^{\textup{lin}}_{F,\Gamma}(\bar x)=\cone\{(1,1)\}\cup\cone\{(-1,0)\}.
	\]
	Hence, it is clear that GGCQ fails to hold at $\bar x$.
	
	Let us now consider direction $d\coloneqq\frac1{\sqrt 2}(1,1) \in 
	T^\textup{lin}_{F,\Gamma}(\bar x)\cap\mathbb S$.
	Concerning MSCQ$(d)$ at $\bar x$, fix $\varepsilon>0$ and $\delta>0$ small enough
	such that all $x\in \{ \bar x\} + \mathbb B_{\varepsilon,\delta}(d)$ fulfill $x\in\R_+\times\R_+$
	and $\nnorm{x} < 1$.
	Then we calculate
	\begin{align*}
		\dist(x,F^{-1}(\Gamma)) &= \dist(x,X) = \frac{1}{\sqrt{2}}|x_1-x_2|, 
		\\
		\dist (F(x), \Gamma) &= \dist((x_1-x_2,-x_1+x_2),\R^2_-) = |x_1-x_2|,
	\end{align*}
	such that MSCQ$(d)$ at $\bar x$ can be fulfilled.
	
	In order to verify strong AM-regularity in direction $d$, let us consider any sequence 
	$\{(x^k,\lambda^k,\delta^k,\xi^k)\}_{k=1}^\infty\subseteq\R^{2+2+2+2}$ 
	and $\xi\in\R^2$ satisfying
	\eqref{eq:dir_AM_reg_stat} and $x^k\neq\bar x$ for all $k\in\N$,
	the convergences \eqref{eq:dir_AM_reg_conv}, and \eqref{eq:approx_stat_dir}.
	Then \eqref{eq:sequences_dir_conv} particularly implies $x_1^k>0$ and $x_2^k>0$ for 
	$k\in\N$ sufficiently large. 
	Thus, on the one hand, for all sufficiently large $k\in\N$, \cref{lem:sum_rule} yields
	\begin{align*}
		\partial\langle \lambda^k, F\rangle(x^k) 
		&= 
		|\lambda_1^k|\partial(\sgn(\lambda_1^k)F_1)(x^k) 
			+ |\lambda_2^k|\partial(\sgn(\lambda_2^k)F_2)(x^k) 
		\\
		&=
		\lambda_1^k 	\left\{\begin{pmatrix} 1 \\ -1 \end{pmatrix}\right\}	
			+ \lambda_2^k \left\{\begin{pmatrix} -1 \\ 1 \end{pmatrix}\right\}
		\subset
		\spa\left\{\begin{pmatrix}1\\-1\end{pmatrix}\right\},
	\end{align*}
	so that $\xi = \alpha(1,-1)$ is valid for some $\alpha\in\R$.
	On the other hand, 
	$N_\Gamma(F(\bar x); F'(\bar x;d)) = N_\Gamma(F(\bar x);0) = N_\Gamma(F(\bar x)) = \R_+ \times \R_+$
	implies
	\[
		\{\xi\in\partial\langle\lambda,F\rangle(\bar x;d)\,|\,\lambda\in N_\Gamma(F(\bar x);F'(\bar x;d))\}
		=
		\spa\left\{\begin{pmatrix}-1\\1\end{pmatrix}\right\},
	\]
	and this shows strong AM-regularity of $\bar x$ in direction $d$.
\end{example}

We now want to use the approximate (directional) constraint qualifications conditions from \cref{def:dir_AM_reg} 
to link directional AM-stationarity and (directional) M-stationarity, 
even in the absence of MSCQ.
For this purpose, consider a feasible point $\bar x\in\R^n$ of \eqref{eq:nonsmooth_problem} and some direction
$d\in T^\textup{lin}_{F,\Gamma}(\bar x)\cap\mathbb S$ such that $\bar x$ is AM-stationary in direction $d$.
Assuming that $\bar x$ is AM-regular (in direction $d$),
it is obvious by \cref{def:dir_AM_reg}
that $\bar x$ is M-stationary.
Similarly, whenever $\bar x$ is strongly AM-regular in direction $d$,
$\bar x$ is M-stationary in direction $d$.
That is why we obtain the following corollary 
of \cref{thm:locmin_dir_akkt_II,thm:locmin_dir_akkt} as well as \cref{cor:dir_asymp_stat_under_GACQ},
see \cref{lem:approx_MSt_NC} as well.

\begin{corollary}\label{cor:Mstat_via_dir_AM_reg}
	Let $\bar x\in\R^n$ be a local minimizer of \eqref{eq:nonsmooth_problem}.
	Then the following assertions hold.
	\begin{enumerate}
		\item\label{item:Mstat_via_AM_reg} 
			If $\bar x$ is AM-regular, then $\bar x$ is M-stationary.
		\item Fix $d\in\widehat C(\bar x)\cap\mathbb S$. If $\bar x$
			is AM-regular in direction $d$ (strongly AM-regular in direction $d$),
			then $\bar x$ is M-stationary (in direction $d$).
		\item Fix $d\in C(\bar x)\cap\mathbb S$, and let GACQ hold at $\bar x$.
			If $\bar x$
			is AM-regular in direction $d$ (strongly AM-regular in direction $d$),
			then $\bar x$ is M-stationary (in direction $d$).
		\item\label{item:Mstat_via_dir_AM_reg_all_directions} 	
			If $\bar x$ is AM-regular in direction $d$ for all $d\in C(\bar x)\cap\mathbb S$,
			then $\bar x$ is M-stationary.
	\end{enumerate}
\end{corollary}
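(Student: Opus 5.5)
The plan is to treat the four assertions separately, each time combining one of the existence results for (directional) AM-stationary sequences with the matching regularity notion from \cref{def:dir_AM_reg}. The common mechanism is this. Given an AM-stationary sequence $\{(x^k,\lambda^k,\delta^k,\varepsilon^k)\}_{k=1}^\infty$ (possibly in direction $d$), set $\xi^k\coloneqq\varepsilon^k-\nabla f(x^k)$. By continuous differentiability of $f$, we have $\xi^k\to\xi\coloneqq-\nabla f(\bar x)$, and \eqref{eq:approximate_statonarity} becomes \eqref{eq:dir_AM_reg_stat}. We then split into two cases.
\begin{itemize}
\item If $\{\lambda^k\}_{k=1}^\infty$ has a bounded subsequence, \cref{lem:taking_the_limit_in_approximate_conditions} or \cref{lem:taking_the_limit_in_directional_approximate_conditions} applies to that subsequence. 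This gives M-stationarity, respectively M-stationarity in direction $d$; the latter lemma needs $d\in C(\bar x)$.
\item Otherwise $\nnorm{\lambda^k}\to\infty$, so \eqref{eq:dir_AM_reg_conv} holds. Together with $x^k\neq\bar x$ and \eqref{eq:approx_stat_dir} in the directional case, the regularity assumption applies directly. It yields $-\nabla f(\bar x)\in\partial\langle\lambda,F\rangle(\bar x)$ for some $\lambda\in N_\Gamma(F(\bar x))$, or the directional version under strong AM-regularity.
\end{itemize}
This is precisely the observation made just before the corollary.

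For \ref{item:Mstat_via_AM_reg}, we obtain an AM-stationary sequence from \cref{lem:approx_MSt_NC} and run the mechanism without directions. For (b), \cref{thm:locmin_dir_akkt_II} provides an AM-stationary sequence in direction $d\in\widehat C(\bar x)\cap\mathbb S$. Since $\widehat C(\bar x)\subset C(\bar x)$, the bounded-subsequence case is covered by \cref{lem:taking_the_limit_in_directional_approximate_conditions}. For (c), GACQ gives $T_X(\bar x)=T^\textup{lin}_{F,\Gamma}(\bar x)$, hence $C(\bar x)=\widehat C(\bar x)$, and (c) reduces to (b). Equivalently, we may invoke \cref{cor:dir_asymp_stat_under_GACQ}.

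For \ref{item:Mstat_via_dir_AM_reg_all_directions}, we apply \cref{thm:locmin_dir_akkt}. In its case \ref{item:locmin_akkt1} we are done. In case \ref{item:locmin_akkt2} there is some $d\in C(\bar x)\cap\mathbb S$ with an AM-stationary sequence in direction $d$ satisfying $\nnorm{\lambda^k}\to\infty$. Because $C(\bar x)\subset T^\textup{lin}_{F,\Gamma}(\bar x)$, the assumed AM-regularity in direction $d$ is meaningful, and the mechanism above gives M-stationarity.

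No step is genuinely hard. The only points needing care are bookkeeping. First, every direction used must lie in $T^\textup{lin}_{F,\Gamma}(\bar x)\cap\mathbb S$, so that \cref{def:dir_AM_reg} applies. Second, the unbounded-multiplier requirement in \eqref{eq:dir_AM_reg_conv} is handled either by passing to a subsequence or via the bounded-case lemmas. Third, the case $\lambda^k=\delta^k=0$ allowed in \cref{thm:locmin_dir_akkt_II} still satisfies \eqref{eq:approx_stat_dir}, and it falls under the bounded case anyway.
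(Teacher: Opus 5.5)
Your proposal is correct and follows essentially the same route as the paper. The paper obtains the corollary by feeding the AM-stationary sequences from \cref{lem:approx_MSt_NC}, \cref{thm:locmin_dir_akkt_II}, \cref{cor:dir_asymp_stat_under_GACQ} (which rests on $C(\bar x)=\widehat C(\bar x)$ under GACQ) and \cref{thm:locmin_dir_akkt} into \cref{def:dir_AM_reg} with $\xi^k\coloneqq\varepsilon^k-\nabla f(x^k)$, and it disposes of bounded multiplier subsequences by the limiting argument noted right after \cref{def:dir_AM_reg}, exactly as you do.
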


Let us note that 
\cref{cor:Mstat_via_dir_AM_reg}\,\ref{item:Mstat_via_AM_reg}
parallels \cite[Theorem~3.9]{Mehlitz2020b} while
\cref{cor:Mstat_via_dir_AM_reg}\,\ref{item:Mstat_via_dir_AM_reg_all_directions}
parallels \cite[Corollary~5.1]{BenkoMehlitz2024b}. 
Moreover, under each of the four conditions considered in \cref{cor:Mstat_via_dir_AM_reg},
a refinement of \cref{thm:locmin_dir_akkt} is possible. Indeed, as 
\cref{thm:locmin_dir_akkt}~\ref{item:locmin_akkt1} holds in all four settings, the 
possibility that \cref{thm:locmin_dir_akkt}~\ref{item:locmin_akkt2} 
holds without \cref{thm:locmin_dir_akkt}~\ref{item:locmin_akkt1} is ruled out
whenever any of the four conditions is valid.
Let us, however, underline that none of the four conditions considered in
\cref{cor:Mstat_via_dir_AM_reg} simply rules out \cref{thm:locmin_dir_akkt}~\ref{item:locmin_akkt2}.

\section{Qualification conditions for problems with orthodisjunctive constraints}\label{sec:subMFC}

In this section,
we are going to introduce qualification conditions for \eqref{eq:nonsmooth_problem}
that are based on AM-stationary points.
In contrast to the conditions from \cref{def:dir_AM_reg},
which require control of all AM-stationary
sequences, the conditions we are studying here are based on \textit{one} particular
AM-stationary sequence and, hence, much easier to check.
Let us emphasize that these conditions are, thus, especially beneficial if one particular sequence is
already provided, e.g., due to the execution of a numerical algorithm. Indeed, many
algorithms for constrained optimization problems are known to produce an AM-stationary
sequence as already mentioned at the beginning of \cref{sec:approx_CQs}.

The underlying idea for constructing the qualification conditions is taken from \cite{KaemingFischerZemkoho2025,KaemingMehlitz2025}
and applies to problems whose constraints, in a certain sense, allow for a componentwise decomposition.
Thus, to start, we specify a suitable subclass of nonsmooth problems
where the new qualification conditions can be employed successfully in \cref{sec:subMFC_setting}.
Thereafter, \cref{sec:subMFC_concept} is dedicated to the actual introduction of the qualification conditions
and presents related consequences as well as a brief comparison
to the constraint qualifications discussed earlier in this paper.

\subsection{Model problem and example classes}\label{sec:subMFC_setting}

We build our considerations on model problem \eqref{eq:nonsmooth_problem}
with $f\colon\R^n\to\R$ being continuously differentiable and 
$F\colon\R^n\to\R^\ell$ being directionally differentiable and locally Lipschitz continuous.
Here, we investigate the special case where $\Gamma$ in \eqref{eq:nonsmooth_problem}
is the union of finitely many convex polyhedral sets $\Gamma_1,\dotsc,\Gamma_t\subset\R^\ell$,
with each $\Gamma_j$, $j\in\{1,\dotsc,t\}$,
being a product of closed intervals. This yields
the so-called orthodisjunctive problem
\begin{equation}\label{eq:orthodisjunctive_problem}\tag{ODP}
	\begin{aligned}
		&\min\limits_x&	&f(x)&			
		\\
		&\text{\,s.t.}&	&F(x)\in \Gamma
		\coloneqq \bigcup\limits_{j=1}^t\Gamma_j	\quad\text{with}\quad
		\Gamma_j \coloneqq  \prod\limits_{i=1}^\ell [a_i^j, b_i^j],
		\qquad 
		\forall j\in\{1,\dotsc,t\}&	
	\end{aligned}
\end{equation}
with $-\infty\leq a_i^j\leq b_i^j\leq\infty$ for all $i\in\{1,\ldots,\ell\}$ and $j\in\{1,\ldots,t\}$.
Here, we used
\[
	[-\infty,r]\coloneqq (-\infty,r],
	\qquad
	[r,\infty]\coloneqq [r,\infty),
	\qquad
	[-\infty,\infty]\coloneqq \R
\]
for $r\in \R$.

For $x\in\R^n$ and $\delta\in\R^\ell$, if $F(x)-\delta\in \Gamma$ 
for $\Gamma$ as in \eqref{eq:orthodisjunctive_problem}, 
we define the set
\[
	J(x,\delta) 
	\coloneqq
	\{ j \in \{1,\dotsc, t\} \mid F(x) - \delta \in \Gamma_j \}
\]
to identify all active components of $\Gamma$ and the set 
\[
	I(x,\delta) 
	\coloneqq 
	\{ i\in \{1,\dots, \ell \} \mid \exists j\in J(x,\delta)\colon\,
		F_i(x)-\delta_i \in \{a_i^j,b_i^j \} \}
\]
to capture the components of $F(x)-\delta$ which are an interval endpoint for at least one
active component of $\Gamma$. 
Whenever $\bar x\in\R^n$ is feasible for \eqref{eq:orthodisjunctive_problem},
we make use of $I(\bar x)\coloneqq I(\bar x;0)$ for brevity of notation.

In this section,
we will make use of the subsequently stated assumption,
which is assumed to hold throughout.

\begin{assumption}\label{ass:subMFC}
	For each $x,d\in\R^n$ and $\lambda\in\R^\ell$, we have
	\[
		\partial\langle\lambda,F\rangle(x;d)
		=
		\mathsmaller\sum\nolimits_{i=1}^\ell|\lambda_i|\partial(\sgn(\lambda_i)\,F_i)(x;d).
	\]
\end{assumption}
Thus, we assume that the scaled sum rule from \cref{lem:sum_rule} holds with equality.
Picking $d\coloneqq0$ in \cref{ass:subMFC}, we particularly have
\[
	\partial\langle\lambda,F\rangle(x)
	=
	\mathsmaller\sum\nolimits_{i=1}^\ell|\lambda_i|\partial(\sgn(\lambda_i)\,F_i)(x)
\]
for all $x\in\R^n$ and $\lambda\in\R^\ell$.

\begin{remark}\label{rem:ass_subMFC}
	We note that \cref{ass:subMFC} is valid if, for each $x\in\R^n$,
	at most one component of $F$ is nonsmooth at $x$,
	see \cref{lem:sum_rule}.
\end{remark}

Let us comment on two practically relevant example classes of optimization problems 
that satisfy \cref{ass:subMFC}.
First, whenever $F$ is continuously differentiable, 
\eqref{eq:orthodisjunctive_problem} corresponds to the orthodisjunctive optimization
problem discussed in \cite[Section~4]{KaemingMehlitz2025} and covers, among others,
optimization problems with cardinality, complementarity, switching, and vanishing constraints.
Second, following \cite[Section~5]{KaemingFischerZemkoho2025}, 
\cref{ass:subMFC} is likely to hold
for the so-called value function reformulation in bilevel optimization.
Let us elaborate on the latter.
Given the parametric optimization problem
\begin{equation}\label{eq:lower_level}\tag{POP$(w)$}	
	\min\limits_{z}\quad p(w,z)\quad \textup{s.t.}\quad q(w,z)\leq 0
\end{equation}
with parameter $w\in\R^{n_1}$, where $p\colon\R^{n_1+n_2}\to\R$ and $q\colon\R^{n_1+n_2}\to\R^s$
are continuously differentiable,
let $\Psi\colon\R^{n_1}\tto\R^{n_2}$ be the associated (global) solution mapping, i.e.,
\[
	\Psi(w) \coloneqq \argmin_z \{ p(w,z) \mid q(w,z)\leq 0 \},\qquad\forall w\in\R^{n_1}.
\] 
Then we are interested in the hierarchical optimization problem
\[
	\min\limits_{w,z}\quad P(w,z)\quad\textup{s.t.}\quad Q(w,z)\leq 0,\quad z\in\Psi(w),
\]
where $P\colon\R^{n_1+n_2}\to\R$ and $Q\colon\R^{n_1+n_2}\to\R^r$ are continuously
differentiable.
Using the so-called optimal value function of \eqref{eq:lower_level}, which is 
the extended real-valued function $\varphi\colon\R^{n_1}\to\R\cup\{-\infty,\infty\}$ 
given by
\[
	\varphi(w)
	\coloneqq
	\inf\limits_z\{p(w,z)\mid q(w,z)\leq 0\},\qquad\forall w\in\R^{n_1},
\]
where $\inf\emptyset\coloneqq\infty$ is used, the hierarchical optimization problem
is equivalent to the single-level optimization problem
\begin{equation}\label{eq:VFref}\tag{VF$_\textup{ref}$}
	\min\limits_{w,z}\quad P(w,z)\quad\textup{s.t.}\quad Q(w,z)\leq 0,\quad q(w,z)\leq 0,\quad p(w,z)-\varphi(w)\leq 0,
\end{equation}
referred to as the value function reformulation.
Clearly, $\varphi$ is very likely to be a nonsmooth function.
However, whenever $\varphi$ is directionally differentiable and locally Lipschitz continuous,
\eqref{eq:VFref} is covered by model \eqref{eq:orthodisjunctive_problem},
where $n\coloneqq n_1+n_2$, $\ell\coloneqq r+s+1$, $t\coloneqq 1$, and $\Gamma\coloneqq\R^\ell_-$,
and \cref{ass:subMFC} holds as at most the constraint including $\varphi$ is nonsmooth.
Note that moderate assumptions on the data functions $p$ and $q$ ensure that $\varphi$
is directionally differentiable, see, e.g., \cite[Section~4.3.2]{BonnansShapiro2000},
and locally Lipschitz continuous, see, e.g., 
\cite[Section~4.1.1]{MehlitzMinchenko2021} and \cite[Proposition~1.2]{YeZhu2010}.
Let us further note that directional stationarity conditions and qualification conditions have been shown to have 
remarkable potential in bilevel optimization, see \cite{BaiYe2022}.

To close this subsection, let us foreshadow why \cref{ass:subMFC} is used in this section.
Due to the componentwise structure of the constraints in \eqref{eq:orthodisjunctive_problem}, 
it might be reasonable to make \eqref{eq:dir_Mst_x} somewhat more accessible via
\begin{align*}
	-\nabla f(\bar x)
	&\in 
	\partial\langle\lambda,F\rangle(\bar x;d)
	\subset
	\mathsmaller\sum\nolimits_{i=1}^\ell|\lambda_i|\,\partial\bigl(\sgn(\lambda_i)\,F_i\bigr)(\bar x;d),
\end{align*}
which holds as equality in the presence of \cref{ass:subMFC}.
Similarly, let us note that validity of FOSCMS$(d)$ at $\bar x$ is implied by
\begin{equation}\label{eq:sFOSCMS}
	0\in\mathsmaller\sum\nolimits_{i=1}^\ell|\lambda_i|\,\partial\bigl(\sgn(\lambda_i)\,F_i\bigr)(\bar x;d),\,
	\lambda\in N_\Gamma(F(\bar x);F'(\bar x;d))
	\quad\Longrightarrow\quad
	\lambda=0,
\end{equation}
and both conditions are equivalent 
in the presence of \cref{ass:subMFC}.
The new qualification conditions we are going to introduce in \cref{sec:subMFC_concept}
build upon the componentwise structure of the constraints in \eqref{eq:orthodisjunctive_problem}.
Hence, rather than giving a lengthy introduction of 
(approximate and/or directional) M-stationarity conditions that employ, 
instead of the left-hand side, the right-hand side of the sum rule from \cref{lem:sum_rule},
which, of course, would also be possible,
we simply use \cref{ass:subMFC} as it ensures that these concepts coincide with those
discussed before.
Working out the details of this chapter for fully nonsmooth functions $F$ is, thus,
left to the interested reader but seems to be a rather canonical affair.

\subsection{Qualification conditions based on approximately M-stationary points}\label{sec:subMFC_concept}

Given an AM-stationary point $\bar x\in\R^n$ of \eqref{eq:orthodisjunctive_problem},
some critical direction $d\in C(\bar x)\cap\mathbb S$,
and some AM-stationary sequence 
$\{(x^k,\lambda^k,\delta^k,\varepsilon^k)\}_{k=1}^\infty\subset\R^{n+\ell+\ell+n}$ 
w.r.t.\ $\bar x$ in direction $d$,
we are interested in qualification conditions ensuring (directional) M-stationary of $\bar x$
while depending merely on the particular sequence
$\{(x^k,\lambda^k,\delta^k,\varepsilon^k)\}_{k=1}^\infty$.
This is very much in contrast to the regularity conditions from \cref{def:dir_AM_reg},
which depend on \emph{all} AM-stationary sequences w.r.t.\ $\bar x$ in direction $d$.
This idea, in the non-directional situation, 
has already been considered in the recent papers \cite{KaemingFischerZemkoho2025,KaemingMehlitz2025}
and will be refined here.

Before introducing our qualification conditions, let us note that we will utilize the following 
observation without further mention for brevity of presentation in this subsection.
\begin{remark}\label{rem:constant_sets}
	Whenever we consider a sequence 
	$\{(x^k,\lambda^k,\delta^k,\varepsilon^k)\}_{k=1}^\infty \subset \R^{n+\ell+\ell+n}$ 
	that is AM-stationary w.r.t.\ $\bar{x}$ 
	(in direction $d\in T_{F,\Gamma}^{\textup{lin}}(\bar{x})$) 
	in the following, 
	we may assume without loss of generality that the sets $J(x^k,\delta^k)$ are the same for all
	$k\in\N$, and, likewise, the sets $I(x^k,\delta^k)$ are the same for all $k\in\N$, 
	which is reasonable as it is always possible to consider a suitable subsequence. 
	Similarly, we may also assume that each component of $\lambda^k$ possesses a 
	constant sign for all $k\in\N$.
\end{remark}

We now generalize the so-called ODP Subset Mangasarian--Fromovitz condition from \cite{KaemingMehlitz2025},
that has been introduced for \eqref{eq:orthodisjunctive_problem} in the special case of continuously differentiable 
constraint functions, to \eqref{eq:orthodisjunctive_problem} with nonsmooth constraint functions as considered here.

\begin{definition}\label{def:normal_subMFC}
	Let $\bar{x}\in\R^n$ be an AM-stationary point of 
	\eqref{eq:orthodisjunctive_problem}. We say that the 
	\textit{ODP Subset Mangasarian-Fromovitz Condition (ODP-subMFC)} holds at 
	$\bar{x}$ if there exist an
	index set $I \subseteq I(\bar{x})$ and a sequence 
	$\{(x^k,\lambda^k,\delta^k,\varepsilon^k)\}_{k=1}^\infty
	\subset \R^{n+\ell+\ell+n}$
	such that the following conditions 
	are satisfied.
	\begin{enumerate}[label=(\roman*)]
		\item \label{subMFCI} 
			Either $I=\emptyset$, or it holds for all $u\in\R^{\ell} \setminus \{ 0 \}$ with 
			$u\geq 0$ and $u_{\{1,\dotsc,\ell\}\setminus I}=0$ that
			\begin{equation}\label{eq:normal_subMFC_cond} 
				0 \displaystyle \notin \mathsmaller\sum\nolimits_{i\in I}  
					u_i \partial\bigl(\sgn(\lambda^k_i)\, F_i\bigr)(\bar x),
				\qquad
				\forall k\in\N.
			\end{equation}
		\item \label{subMFCII} 
			The sequence $\{(x^k,\lambda^k,\delta^k,\varepsilon^k)\}_{k=1}^\infty$ 
			is AM-stationary w.r.t.\ $\bar{x}$, and $I=I(x^k,\delta^k)$ is
			valid for all $k\in \N$.
	\end{enumerate}
\end{definition}

It is easy to see that \cref{def:normal_subMFC} recovers 
\cite[Definition 4.3]{KaemingMehlitz2025} if all constraint functions are 
continuously differentiable. Indeed, in this case, we have 
$\partial(\sgn(\lambda_i^k)F_i)(\bar{x}) = \{\sgn(\lambda_i^k)\nabla F_i(\bar{x})\}$ 
in \eqref{eq:normal_subMFC_cond}. Additionally, in 
\cite[Section 4.2]{KaemingMehlitz2025}, it was shown that for smooth 
inequality-constrained problems, ODP-subMFC recovers a similar condition introduced
in \cite{KaemingFischerZemkoho2025}.

Noting that ODP-subMFC implicitly depends on the objective function of 
\eqref{eq:orthodisjunctive_problem} via the claimed existence of an
AM-stationary sequence, it is not a constraint qualification in the narrower sense
but merely a qualification condition.
Moreover, note that condition~\ref{subMFCI} of ODP-subMFC, even though depending
on $k\in\N$, boils down to the same relation for all $k\in\N$ due to \cref{rem:constant_sets},
and, thus, constitutes a single point-based condition at $\bar x$.
Further, condition~\ref{subMFCI} of ODP-subMFC
underlines its close relationship to NNAMCQ, 
which we will carve out in detail later in \cref{prop:subMFC_vs_NNAMCQ}.
However, we already want to mention at this point that ODP-subMFC offers some flexibility
in ruling out crucial constraints by appropriately choosing the index set $I$,
which is clearly not the case for NNAMCQ.

Motivated by \cref{thm:locmin_dir_akkt_II,thm:locmin_dir_akkt}
and to refine ODP-subMFC even further, we introduce the following directional version.

\begin{definition}\label{def:dir_subMFC}
	Let $\bar{x}\in\R^n$ be an AM-stationary point of
	\eqref{eq:orthodisjunctive_problem}, and let
	$d\in C(\bar{x}) \cap \mathbb{S}$ be chosen arbitrarily.
	We say that the
	\textit{ODP Subset Mangasarian-Fromovitz Condition in direction $d$ (ODP-subMFC$(d)$)}
	holds at $\bar{x}$ if there exist an
	index set $I \subseteq I(\bar{x})$ and a sequence 
	$\{(x^k,\lambda^k,\delta^k,\varepsilon^k)\}_{k=1}^\infty
	\subset \R^{n+\ell+\ell+n}$
	such that the following conditions 
	are satisfied.
	\begin{enumerate}[label=(\roman*)]
		\item \label{dir_subMFCI} 
			Either $I=\emptyset$, or it holds for all $u\in\R^{\ell} \setminus \{ 0 \}$ with 
			$u\geq 0$ and $u_{\{1,\dotsc,\ell\}\setminus I}=0$ that
			\begin{equation}\label{eq:dir_subMFC_cond} 
				0 \displaystyle \notin \mathsmaller\sum\nolimits_{i\in I}  
				u_i \partial\bigl(\sgn(\lambda^k_i)\, F_i\bigr)(\bar x;d),
				\qquad
				\forall k\in\N.
			\end{equation}
		\item \label{dir_subMFCII} 
			The sequence $\{(x^k,\lambda^k,\delta^k,\varepsilon^k)\}_{k=1}^\infty$ 
			is AM-stationary w.r.t.\ $\bar x$ in direction $d$,
			and $I=I(x^k,\delta^k)$ is valid for all $k\in\N$.
	\end{enumerate}
\end{definition}

Similarly as above, we observe that ODP-subMFC$(d)$ is not a constraint qualification
but merely a qualification condition. Condition~\ref{dir_subMFCI} of ODP-subMFC$(d)$
reads the same for all $k\in\N$ by \cref{rem:constant_sets} and
indicates that ODP-subMFC$(d)$ is related to FOSCMS$(d)$. 

Given an AM-stationary point $\bar x \in \R^n$ and a direction
$d\in C(\bar x) \cap \mathbb S$, let us note 
that~\ref{dir_subMFCII} of ODP-subMFC$(d)$ is more restrictive
than~\ref{subMFCII} of ODP-subMFC.
However, keeping \cref{thm:locmin_dir_akkt_II,thm:locmin_dir_akkt} in mind,
local minimizers are likely to come along with AM-stationary
sequences in critical directions, so~\ref{dir_subMFCII}
of ODP-subMFC$(d)$ is somewhat natural.
Given an AM-stationary sequence w.r.t.\ $\bar x$ in direction $d$,~\ref{dir_subMFCI} 
of ODP-subMFC$(d)$ is generally less restrictive than~\ref{subMFCI} of ODP-subMFC 
as the directional limiting subdifferentials
appearing in \eqref{eq:dir_subMFC_cond} can be strict subsets of the
limiting subdifferentials occurring in \eqref{eq:normal_subMFC_cond}
whenever the constraint function is nonsmooth at $\bar x$.

The following result, which generalizes \cite[Theorem 3.9]{KaemingFischerZemkoho2025} and
\cite[Theorem 4.5(a)]{KaemingMehlitz2025}, justifies our interest in ODP-subMFC and its directional version
as qualification conditions. 
\begin{theorem}\label{thm:ODPsubMFC}
	Let $\bar{x}\in\R^n$ be an AM-stationary point of 
	\eqref{eq:orthodisjunctive_problem}.
	\begin{enumerate}
		\item\label{item:ODPsubMFC_implies_Mstat}		
			If ODP-subMFC is satisfied at $\bar{x}$, then $\bar{x}$ is M-stationary.
		\item\label{item:dirODPsubMFC_implies_dirMstat}		
			Fix $d\in C(\bar x)\cap\mathbb S$.
			If ODP-subMFC$(d)$ is satisfied at $\bar{x}$, 
			then $\bar{x}$ is M-stationary in direction $d$.
	\end{enumerate}
\end{theorem}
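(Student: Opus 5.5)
The plan is to show that the multiplier sequence $\{\lambda^k\}_{k=1}^\infty$ of the particular sequence from ODP-subMFC (respectively ODP-subMFC$(d)$) is bounded. Then \cref{lem:taking_the_limit_in_approximate_conditions} yields M-stationarity, and \cref{lem:taking_the_limit_in_directional_approximate_conditions} yields M-stationarity in direction $d$. Both parts run in parallel; I describe the directional one. Using \cref{rem:constant_sets}, I fix the sets $J(x^k,\delta^k)$ and $I(x^k,\delta^k)=I$ and the signs of all components of $\lambda^k$ along a subsequence.

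\textbf{Support of the multipliers.} Since $\Gamma$ is locally around $F(x^k)-\delta^k$ the union of the active boxes $\Gamma_j$, $j\in J(x^k,\delta^k)$, we have
\[
	N_\Gamma(F(x^k)-\delta^k)\subset\mathsmaller\bigcup\nolimits_{j\in J(x^k,\delta^k)} N_{\Gamma_j}(F(x^k)-\delta^k).
\]
For products of intervals, a normal vector can only have nonzero components at indices where $F_i(x^k)-\delta^k_i$ is an endpoint $a_i^j$ or $b_i^j$. Hence $\lambda^k_i=0$ for all $i\notin I$ and all $k$. In particular, if $I=\emptyset$, then $\lambda^k=0$ and boundedness is trivial.

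\textbf{Contradiction argument.} Assume $\nnorm{\lambda^k}\to\infty$ along a subsequence. \cref{ass:subMFC} gives
\[
	\varepsilon^k-\nabla f(x^k)=\mathsmaller\sum\nolimits_{i\in I}|\lambda^k_i|\,\xi^{k,i},
	\qquad
	\xi^{k,i}\in\partial(\sgn(\lambda^k_i)F_i)(x^k),
\]
where the signs $s_i\coloneqq\sgn(\lambda^k_i)$ are constant in $k$. Dividing by $\nnorm{\lambda^k}$ and passing to a subsequence, I obtain $|\lambda^k_i|/\nnorm{\lambda^k}\to u_i\ge0$ with $u\neq0$ and $u_i=0$ for $i\notin I$. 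Local Lipschitz continuity of $F$ makes the $\xi^{k,i}$ uniformly bounded, so after a further subsequence $\xi^{k,i}\to\xi^i$. The left-hand side divided by $\nnorm{\lambda^k}$ tends to zero. This gives $0=\sum_{i\in I}u_i\xi^i$.

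For indices with $u_i>0$, I apply \cref{lem:robustness_of_subdifferential_scalarization_function} componentwise to the scalar map $s_iF_i$ with multiplier $1$, using $x^k\neq\bar x$ and $(x^k-\bar x)/\nnorm{x^k-\bar x}\to d$. This yields $\xi^i\in\partial(s_iF_i)(\bar x;d)$. Consequently
\[
	0\in\mathsmaller\sum\nolimits_{i\in I}u_i\,\partial(s_iF_i)(\bar x;d),
\]
which contradicts \eqref{eq:dir_subMFC_cond}. In the non-directional part, the first half of \cref{lem:robustness_of_subdifferential_scalarization_function} gives instead the contradiction to \eqref{eq:normal_subMFC_cond}.

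\textbf{Main obstacle.} The delicate step is the componentwise limit: I need individual bounded subgradients $\xi^{k,i}$ and their directional limits, rather than a single element of $\partial\langle\lambda^k,F\rangle(x^k)$. This is exactly why \cref{ass:subMFC} is used to split the subdifferential of the scalarization.

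A smaller point is the support claim, which relies on $\Gamma_j$ being a product of closed intervals. For nonactive boxes, closedness ensures they stay away from $F(x^k)-\delta^k$, so the local union representation of $N_\Gamma$ holds.

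Once boundedness is established, the remaining conditions \eqref{eq:sequences_dir_conv}--\eqref{eq:multipliers_not_arbitrarily_unbounded} are already part of the hypothesis, and \cref{lem:taking_the_limit_in_directional_approximate_conditions} finishes the proof.
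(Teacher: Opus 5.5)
Your proposal is correct and follows essentially the paper's proof. You restrict $\supp(\lambda^k)$ to $I$, rule out unbounded multipliers by normalizing and passing to the limit via \cref{lem:robustness_of_subdifferential_scalarization_function} to contradict condition (i), and then take the limit; the paper itself notes in \cref{rem:bounded_multipliers_under_ODPsubMFC} that this last step follows from \cref{lem:taking_the_limit_in_directional_approximate_conditions}. The only difference is that you split into component subgradients at $x^k$ and pass to the limit componentwise, whereas the paper takes the limit in $\partial\langle\lambda^k_I/\nnorm{\lambda^k_I},F_I\rangle(x^k)$ and splits afterwards at $\bar x$ in direction $d$. In your ordering the general inclusion from \cref{lem:sum_rule} already suffices, so \cref{ass:subMFC} is not actually needed there.
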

\begin{proof}
	Here, we only prove the slightly more delicate assertion~\ref{item:dirODPsubMFC_implies_dirMstat}.
	The verification of statement~\ref{item:ODPsubMFC_implies_Mstat} is similar and can be
	distilled from the one provided for~\ref{item:dirODPsubMFC_implies_dirMstat} 
	by omitting directional information.
	Besides, both proofs parallel the one of \cite[Theorem 4.5(a)]{KaemingMehlitz2025}.
	
	If ODP-subMFC$(d)$ is satisfied at $\bar x$ for some $d\in C(\bar x) \cap \mathbb{S}$, it follows
	from \ref{dir_subMFCII} of ODP-subMFC$(d)$ at $\bar x$ 
	that there exists a sequence $\{(x^k,\lambda^k,\delta^k,\varepsilon^k)\}_{k=1}^\infty\subset\R^{n+\ell+\ell+n}$ 
	that is AM-stationary w.r.t.\ $\bar x$ in direction $d$, i.e., it 
	fulfills \eqref{eq:sequences_conv}, \eqref{eq:approx_stat_dir}, and, for all $k\in\N$,
	$\lambda^k\in N_\Gamma(F(x^k)-\delta^k)$, $x^k\neq \bar x$, and
	\begin{equation}\label{eq:proof_stationarity_cond}
		\varepsilon^k - \nabla f(x^k) 
		\in 
		\partial \langle \lambda^k, F \rangle (x^k)
		= 
		\mathsmaller\sum\nolimits_{i=1}^\ell|\lambda_i^k|\partial(\sgn(\lambda_i^k)\,F_i)(x^k),
	\end{equation}
	where we also recall \cref{ass:subMFC}. 
	Additionally, \ref{dir_subMFCII} of ODP-subMFC$(d)$ at $\bar x$ implies that we can find an index set 
	$I\subset I(\bar x)$ such that $I=I(x^k,\delta^k)$ is valid for all $k\in\N$. 
	From \cite[Remark 4.1(b)]{KaemingMehlitz2025} it follows for the special structure of 
	$\Gamma$ in \eqref{eq:orthodisjunctive_problem} that 
	$\lambda^k \in N_\Gamma(F(x^k)-\delta^k)$ implies 
	$\supp(\lambda^k) \subset I(x^k,\delta^k) = I$ for all $k\in\N$,
	such that \eqref{eq:proof_stationarity_cond} can be simplified to
	\begin{equation}\label{eq:proof_stationarity_cond2}
		\varepsilon^k - \nabla f(x^k) 
		\in
		\mathsmaller\sum\nolimits_{i\in I} |\lambda_i^k|\partial(\sgn(\lambda_i^k)\,F_i)(x^k)
		=
		\partial \langle \lambda_I^k, F_I \rangle (x^k),
	\end{equation}
	using \cref{ass:subMFC} again.
	If $I=\emptyset$, then $\lambda^k=0$ holds for all $k\in\N$,
	and we can take the limit $k\to\infty$ in \eqref{eq:proof_stationarity_cond}
	to find
	$-\nabla f(\bar x)=0$ by continuity of $\nabla f$ and \eqref{eq:sequences_conv}. 
	As $\bar x$ is feasible for 
	\eqref{eq:orthodisjunctive_problem}, this directly yields M-stationarity of $\bar x$ in
	direction $d$ with multiplier $\lambda \coloneqq 0$.
	
	If $I\neq\emptyset$, we prove that the sequence $\{ \lambda^k_I \}_{k=1}^\infty$ is bounded.
	Thus, assume on the contrary that this sequence is unbounded.
	Then $\nnorm{ \lambda^k_{I} } \to \infty$
	is valid along a subsequence (without relabeling), and, thus, without loss of generality, 
	we can find some $\bar\lambda\in\R^\ell$ with $\bar\lambda_I\neq 0$ and 
	$\bar \lambda_{\{1,\dotsc,\ell\}\setminus I}=0$ such that
	the convergence 
	\begin{equation*}
		\frac{ \lambda^k_{I} }
			{\nnorm{ \lambda^k_{I} }} 
		\to 
		\bar{\lambda}_{I}
	\end{equation*} 
	holds.
	Dividing \eqref{eq:proof_stationarity_cond2} by 
	$\nnorm{ \lambda^k_{I} }$ 
	leads to
	\begin{equation*}
		\frac{\varepsilon^k - \nabla f(x^k) }{\nnorm{ \lambda^k_{I} }}	
		\in
		\frac{\partial \langle \lambda_I^k, F_I \rangle (x^k)}{\nnorm{ \lambda^k_{I} }} 
		=
		\partial \left\langle \frac{\lambda^k_I}{\nnorm{ \lambda^k_{I}}}, F_I \right\rangle (x^k),
	\end{equation*} 
	where the last equation follows from \eqref{eq:scalarization_rule} and the positive homogeneity 
	of the limiting coderivative stated in \cite[Proposition~8.37]{RockafellarWets1998}.
	Taking the limit $k\to\infty$ in the above inclusion while applying 
	\eqref{eq:sequences_conv}, \eqref{eq:sequences_dir_conv}, and
	\cref{lem:robustness_of_subdifferential_scalarization_function}
	yields
	\begin{equation*}
		0 
		\in 
		\partial \langle \bar{\lambda}_{I}, F_I \rangle (\bar x;d)
		=
		\mathsmaller\sum\nolimits_{i\in I} |\bar \lambda_i| \partial(\sgn(\bar \lambda_i)\,F_i)(\bar x;d),
	\end{equation*}  
	where we again recall \cref{ass:subMFC}. 
	Using the set $I_\pm \coloneqq\{i\in I\,|\, \bar \lambda_i \neq 0\}$
	and $\sgn(\lambda_i^k)=\sgn(\bar \lambda_i)$ for $i\in I_\pm$, $k\in\N$ by construction 
	and \cref{rem:constant_sets},
	the above inclusion is equivalent to
	\begin{align*}
		0 
		\in 
		\mathsmaller\sum\nolimits_{i\in I_\pm} |\bar \lambda_i| \partial(\sgn(\bar \lambda_i)\,F_i)(\bar x;d)
		&=
		\mathsmaller\sum\nolimits_{i\in I_\pm} |\bar \lambda_i| \partial(\sgn(\lambda_i^k)\,F_i)(\bar x;d)
		\\
		&=
		\mathsmaller\sum\nolimits_{i\in I} |\bar \lambda_i| \partial(\sgn(\lambda_i^k)\,F_i)(\bar x;d)
	\end{align*}
	for all $k\in\N$, which violates \ref{dir_subMFCI} of ODP-subMFC$(d)$ at $\bar x$
	as $|\bar \lambda|\geq 0$, $ |\bar \lambda|_{\{1,\dotsc,\ell\}\setminus I}=0$, 
	and $ |\bar \lambda|_I\neq 0$.
	Here, $|\bar\lambda|\in\R^\ell$ is the vector 
	with entries $|\bar\lambda_i|$, $i\in\{1,\ldots,\ell\}$.
	Consequently, the sequence 
	$\{ \lambda^k_{I} \}_{k=1}^\infty$ 
	must be bounded, and, hence, admits a subsequence (without relabeling) that satisfies
	$ \lambda^k_I \to \widetilde{\lambda}_{I} $ for
	some $\widetilde\lambda\in\R^\ell$ 
	with $\widetilde\lambda_{\{1,\dotsc,\ell\}\setminus I} = 0$. 
	Taking the limit $k\to\infty$ in \eqref{eq:proof_stationarity_cond2}, it follows from 
	\eqref{eq:sequences_conv}, \eqref{eq:sequences_dir_conv}, 
	and \cref{lem:robustness_of_subdifferential_scalarization_function} that
	\[
		-\nabla f(\bar x)
		\in
		\partial \langle \widetilde \lambda_I, F_I \rangle (\bar{x};d)
		=
		\partial \langle \widetilde \lambda, F \rangle (\bar{x};d)
		.
	\] 
	Finally, $\lambda^k\in N_\Gamma(F(x^k)-\delta^k)$ for all $k\in\N$,
	\eqref{eq:sequences_conv}, \eqref{eq:sequences_dir_conv}, \eqref{eq:suitable_repr_of_constraint_viol}, and 
	the robustness of the directional limiting normal cone
	yield $\widetilde\lambda \in N_\Gamma(F(\bar{x});F'(\bar{x};d))$.
	Hence, we have shown that $\bar x$ is M-stationary in direction $d$
	with multiplier $\widetilde \lambda$.
\end{proof}

The subsequent remark summarizes particular information on the multiplier sequence obtained
in the proof of \cref{thm:ODPsubMFC} and parallels
similar observations made in \cite[Remark 3.10]{KaemingFischerZemkoho2025}
and \cite[Remark 4.6]{KaemingMehlitz2025}.
\begin{remark}\label{rem:bounded_multipliers_under_ODPsubMFC}
	Suppose that ODP-subMFC or ODP-subMFC$(d)$ can be verified at $\bar x\in\R^n$ using the index 
	set $I\subset I(\bar x)$ and the sequence 
	$\{(x^k,\lambda^k,\delta^k,\varepsilon^k)\}_{k=1}^\infty\subset \R^{n+\ell+\ell+n}$.
	It follows from the proof of \cref{thm:ODPsubMFC} that this ensures boundedness 
	of $\{ \lambda^k\}_{k=1}^\infty$ as we have shown $\supp(\lambda^k) \subset I$ for all $k\in\N$
	and, whenever $I\neq \emptyset$, boundedness of $\{ \lambda_I^k\}_{k=1}^\infty$.
	Let us note that the final part of the proof of \cref{thm:ODPsubMFC}, thus,
	also follows from \cref{lem:taking_the_limit_in_directional_approximate_conditions}.
\end{remark}

Given $\bar x\in\R^n$, we are going to show next that
ODP-subMFC at $\bar x$ and ODP-subMFC$(d)$ at $\bar x$ for any $d\in C(\bar{x}) \cap \mathbb{S}$ 
are independent conditions.

Clearly, $C(\bar x) \neq \{0\}$ is necessary for ODP-subMFC$(d)$ to hold
at $\bar x$ for any $d\in C(\bar{x}) \cap \mathbb{S}$,
while this is not required for ODP-subMFC to be valid at $\bar x$.
Indeed, ODP-subMFC holds for the optimization problem 
\[
	\min\limits_x\quad -x \quad\textup{s.t.} \quad F(x) \coloneqq x \in \Gamma \coloneqq \R_-
\] 
at $\bar x\coloneqq 0$, but $C(\bar x)=\{0\}$ trivially implies that there
cannot be any $d\in C(\bar{x}) \cap \mathbb{S}$ to fulfill
ODP-subMFC$(d)$ at $\bar x$. 
However, even if $C(\bar{x}) \cap \mathbb{S} \neq \emptyset$
can be guaranteed, it is still possible that ODP-subMFC holds at $\bar x$ while 
ODP-subMFC$(d)$ is violated at $\bar x$ for all $d\in C(\bar{x}) \cap \mathbb{S}$,
as the subsequent example illustrates.

\begin{example}\label{ex:normal_but_not_dir_subMFC}
	Consider the optimization problem
	\[
		\begin{aligned}
		&\min\limits_x\quad& &\frac32 x_1+x_2+x_3&
		\\
		&\,\textup{s.t.} \quad& &F(x)\coloneqq (g(x_1)-x_2-x_3-1,x_1+|x_2|-x_2-1,-x_3) 
		\in 
		\Gamma \coloneqq \{ 0\} \times \{ 0\} \times \R_-,&
		\end{aligned}
	\]
	where $g\colon\R\to\R$ is the continuously differentiable function given by
	\[
		g(\tau)\coloneqq
		\begin{cases}
			\sqrt\tau		&	\tau\geq\frac14
			\\
			\tau+\frac14	&	\tau<\frac14
		\end{cases}
	\] 
	for $\tau\in\R$.
	This problem has the feasible set $X=\{(1,0,0)\}$, 
	so that $\bar{x} \coloneqq (1,0,0)$ is its uniquely determined global minimizer.
	Furthermore, we find
	\begin{align*}
		\widehat C(\bar x) =  T_X(\bar{x}) = \{ 0 \} \times \{ 0 \} \times \{ 0 \},
		\quad	
		C(\bar{x}) &=  T_{F,\Gamma}^{\textup{lin}}(\bar{x})
			= \left\{ \left(2\alpha,\alpha,0 \right)\in\R^3 \,\middle|\, \alpha\in\R_- \right\}.
	\end{align*}
	
	To verify \ref{subMFCII} of ODP-subMFC at $\bar x$, we need to find a sequence 
	$\{(x^k,\lambda^k,\delta^k,\varepsilon^k)\}_{k=1}^\infty\subset\R^{3+3+3+3}$ 
	that is AM-stationary w.r.t.\ $\bar{x}$. Such a sequence needs to fulfill
	the convergences \eqref{eq:sequences_conv} and, using \cref{rem:ass_subMFC},
	\begin{equation}\label{eq:ex_stat_system}
		\begin{aligned}
		\varepsilon^k-
		\begin{pmatrix}	\frac32 \\ 1  \\ 1\end{pmatrix}
		&\in 
		\partial\langle\lambda^k,F\rangle(x^k)
		\\
		&=\,
		\mathsmaller\sum\nolimits_{i=1}^3 |\lambda_i^k|\,\partial\bigl(\sgn(\lambda_i^k)\,F_i\bigr)(x^k)
		\\
		&=\,
		\lambda_1^k
		\left\{\begin{pmatrix} g'(x^k_1) \\ -1 \\ -1 \end{pmatrix}\right\}
		+ 
		|\lambda_2^k|\,
		\Xi(x^k_2,\lambda^k_2)
		+ 	
		\lambda_3^k	
		\left\{\begin{pmatrix} 0 \\ 0 \\ -1 \end{pmatrix}\right\}	
		\end{aligned}
	\end{equation}
	with 
	\[ 
		\Xi(x_2^k,\lambda_2^k)
		=
		\{\sgn(\lambda_2^k)\}\times
		\begin{cases}
			[ -2, 0] 					& x_2^k=0, \lambda_2^k>0 \\
			\{0,2\} 					& x_2^k=0, \lambda_2^k<0 \\
			\{ 0 \} 					& x_2^k > 0 \,\textup{ or }\, \lambda^k_2=0 \\
			\{ -2\sgn(\lambda^k_2) \}	& x_2^k < 0, \lambda^k_2\neq 0 \\
		\end{cases}
		\times\{0\}	
	\]
	as well as
	\begin{align*}
		\lambda^k\in N_\Gamma(F(x^k)-\delta^k) 
		= 
		\begin{cases}
			\R \times \R \times \R_+ & F(x^k) = \delta^k
			\\
			\R \times \R \times \{ 0 \} & F_1(x^k) = \delta_1^k, F_2(x^k) = \delta_2^k, F_3(x^k) < \delta_3^k
			\\
			\emptyset & \textup{otherwise}
		\end{cases}						
	\end{align*}
	for each $k\in\N$. 
	Taking the sequence
	$\{(x^k,\lambda^k,\delta^k,\varepsilon^k)\}_{k=1}^\infty$ with
	\[
		x^k \coloneqq \left(1,0,0\right),
		\quad 
		\lambda^k \coloneqq \left(1,-2,0\right),
		\quad 
		\delta^k \coloneqq \left(0,0, \frac{1}{k}\right),
		\quad 
		\varepsilon^k \coloneqq \left(0,0,0\right),
		\qquad
		\forall k\in\N,
	\]
	we see that it is AM-stationary w.r.t.\ $\bar x$ and 
	fulfills $I(x^k,\delta^k) = \{1,2\}$ for all $k\in\N$. 
	Using $I\coloneqq\{1,2\}$ and this
	sequence in \ref{subMFCI} of ODP-subMFC at $\bar{x}$, we require for any 
	$u \in \R^2 \setminus \{ 0 \}$ with $u\geq 0$ that
	\[
		0 \displaystyle 
		\notin \mathsmaller\sum\nolimits_{i\in I}  
		u_i \partial\bigl(\sgn(\lambda^k_i)\, F_i\bigr)(\bar x)
		=
		u_1
		\left\{\begin{pmatrix} \frac12 \\ -1 \\ -1 \end{pmatrix}\right\}
		+
		u_2
		\bigl(\{-1\}\times \{0,2\} \times \{0\}\bigr)
		,
		\qquad 
		\forall k\in\N
	\]
	holds, which is clearly fulfilled.
	Thus, ODP-subMFC at $\bar x$ is satisfied.
	
	To verify ODP-subMFC$(d)$ at $\bar{x}$ using direction
	$d\coloneqq\frac{1}{\sqrt5}(-2,-1,0) \in C(\bar{x}) \cap \mathbb S$, an AM-stationary sequence
	$\{(x^k,\lambda^k,\delta^k,\varepsilon^k)\}_{k=1}^\infty\subset\R^{3+3+3+3}$
	w.r.t.\ $\bar x$ in direction $d$,
	and an index set $I\subset I(\bar x)$,
	we infer from $d_2<0$ that $x_2^k < \bar x_2 = 0$ has to hold, such that the stationarity 
	system from \eqref{eq:ex_stat_system} now reads as
	\[
		\varepsilon^k-
		\begin{pmatrix}	\frac32 \\ 1 \\ 1 \end{pmatrix}
		\in
		\lambda_1^k
		\left\{\begin{pmatrix} g'(x^k_1) \\ -1 \\ -1 \end{pmatrix}\right\}
		+ 
		\lambda_2^k 
		\left\{\begin{pmatrix} 1 \\ -2 \\ 0 \end{pmatrix}\right\}
		+ 	
		\lambda_3^k	
		\left\{\begin{pmatrix} 0 \\ 0 \\ -1 \end{pmatrix}\right\}.		
	\]
	For sufficiently large $k\in\N$,
	the first line of this system yields
	$\lambda_1^k = 2\sqrt{x_1^k} (\varepsilon_1^k - \frac32 - \lambda_2^k)$.
	Inserting the latter into the second line of the system, 
	keeping $x^k_1\to 1$ in mind, and recalling from
	\cref{rem:bounded_multipliers_under_ODPsubMFC} that $\{ \lambda_2^k \}_{k=1}^\infty$ is 
	bounded whenever ODP-subMFC$(d)$ holds at 
	$\bar x$, we obtain
	$1 = 
	\varepsilon_2^k + 2\sqrt{x_1^k} (\varepsilon_1^k - \frac32 - \lambda_2^k) + 2\lambda_2^k 
	\to -3$,
	which is a contradiction. Thus, ODP-subMFC$(d)$ cannot hold at $\bar x$.
	
	Straightforward calculations show that, indeed,
	$\bar x$ is not M-stationary in direction $d$.
\end{example}

Given $\bar x\in\R^n$, the following example shows that ODP-subMFC can be violated 
even if ODP-subMFC$(d)$ holds for all $d\in C(\bar x) \cap \mathbb{S}$.

\begin{example}\label{ex:dir_but_not_normal_subMFC}
	Consider the optimization problem
	\[
		\min\limits_x\quad x_1^2 \quad\textup{s.t.} \quad F(x)\coloneqq(-|x_1|+|x_2|,-x_1+x_2) 
		\in 
		\Gamma \coloneqq \{ 0\} \times \{ 0\}
	\] 
	with the unique global minimizer $\bar{x} \coloneqq (0,0)$ and
	\[
		C(\bar{x}) =  T_{F,\Gamma}^{\textup{lin}}(\bar{x}) = \widehat C(\bar x) = 
		T_X(\bar{x}) = X = \{ (x,x)\in\R^2 \mid x\in \R\}.
	\]	
	
	To verify \ref{subMFCII} of ODP-subMFC at $\bar x$, we need to find a sequence 
	$\{(x^k,\lambda^k,\delta^k,\varepsilon^k)\}_{k=1}^\infty \subset \R^{2+2+2+2}$ 
	that is	AM-stationary w.r.t.\ $\bar{x}$. For this purpose, it needs to satisfy
	the convergences \eqref{eq:sequences_conv} as well as, 
	using \cref{rem:ass_subMFC},
	\begin{align*}
		\varepsilon^k-
		\begin{pmatrix}	2x_1^k \\ 0 	\end{pmatrix}
		\in &\,
		\partial\langle\lambda^k,F\rangle(x^k)
		\\
		= &\,
		\mathsmaller\sum\nolimits_{i=1}^2
		|\lambda^k_i|\,\partial\bigl(\sgn(\lambda_i^k)\,F_i\bigr)(x^k)
		\\
		= &\,
		|\lambda_1^k|
		\left(
			\begin{cases}
				\{ -1, 1\} & x_1^k=0, \lambda_1^k>0 \\
				[-1,1] & x_1^k=0, \lambda_1^k<0 \\
				\{ -\sgn(\lambda^k_1) \} & x_1^k > 0, \lambda^k_1\neq 0 \\
				\{ \sgn(\lambda^k_1) \} & x_1^k < 0, \lambda^k_1\neq 0 \\
				\{0\} & \lambda^k_1=0
			\end{cases}							
			\times
			\begin{cases}
				[-1,1] & x_2^k=0, \lambda_1^k>0 \\
				\{ -1, 1\} & x_2^k=0, \lambda_1^k<0 \\
				\{ \sgn(\lambda^k_1) \} & x_2^k > 0, \lambda^k_1\neq 0 \\
				\{ -\sgn(\lambda^k_1) \} & x_2^k < 0, \lambda^k_1\neq 0 \\
				\{0\} & \lambda^k_1=0
			\end{cases}		
		\right)
		\\
		&
		+ 
		\lambda_2^k
		\left\{\begin{pmatrix} -1 \\ 1 \end{pmatrix}\right\}					
	\end{align*}
	and
	\begin{align*}
		\lambda^k\in N_\Gamma(F(x^k)-\delta^k) 
		= 
		\begin{cases}
			\R \times \R & F(x^k) = \delta^k
			\\
			\emptyset & F(x^k) \neq \delta^k
		\end{cases}						
	\end{align*}
	for each $k\in\N$. Clearly, the latter implies that we require 
	$F(x^k) = \delta^k$	for all $k\in\N$, such that necessarily $I(x^k,\delta^k)=\{1,2 \}$ 
	holds for all $k\in\N$. Thus, only the choice $I\coloneqq \{1,2\}$ is possible to 
	fulfill ODP-subMFC at $\bar{x}$. 
	In order to avoid a trivial violation of \ref{subMFCI} of ODP-subMFC at $\bar x$,
	we necessarily need $\lambda^k_i\neq 0$ for all $k\in\N$ and $i=1,2$.	
	Then \ref{subMFCI} of ODP-subMFC at $\bar x$ 
	requires for any $u\in \R^2\setminus \{0\}$ with $u\geq 0$ that
	\begin{align*}
		0 \displaystyle 
		&\notin \mathsmaller\sum\nolimits_{i\in I}  
		u_i \partial\bigl(\sgn(\lambda^k_i)\, F_i\bigr)(\bar x)
		\\
		&=
		u_1
		\left(
			\begin{cases}
				\{ -1, 1\} & \lambda_1^k>0 \\
				[-1,1] & \lambda_1^k<0 
			\end{cases}							
			\times
			\begin{cases}
				[-1,1] & \lambda_1^k>0 \\
				\{ -1, 1\} & \lambda_1^k<0
			\end{cases}		
		\right)
		+
		u_2
		\left\{\begin{pmatrix} -\sgn(\lambda_2^k) \\ \sgn(\lambda_2^k) \end{pmatrix}\right\},	
		\qquad 
		\forall k\in\N,
	\end{align*}
	which is clearly violated for $u_1=u_2=1$. Hence, ODP-subMFC at $\bar x$ 
	cannot be satisfied.
	
	Concerning ODP-subMFC$(d)$ at $\bar{x}$ using direction 
	$d\coloneqq\frac{1}{\sqrt{2}}(1,1) \in C(\bar{x}) \cap \mathbb S$,
	one can verify that the sequence 
	$\{(x^k,\lambda^k,\delta^k,\varepsilon^k)\}_{k=1}^\infty$ with
	\[
		x^k \coloneqq \left(\frac{1}{k},\frac{1}{k}\right),
		\quad 
		\lambda^k \coloneqq \left(\frac{1}{k},\frac{1}{k}\right),
		\quad 
		\delta^k \coloneqq (0,0),
		\quad 
		\varepsilon^k \coloneqq \left(0,\frac{2}{k}\right),
		\qquad
		\forall k\in\N
	\]
	is AM-stationary w.r.t.\ $\bar{x}$ in direction $d$ and fulfills 
	$I(x^k,\delta^k) = \{1,2\}$ for	all $k\in\N$. Thus, this sequence can be used in 
	\ref{dir_subMFCII} of ODP-subMFC$(d)$ at $\bar x$ and requires us 
	to set $I\coloneqq\{1,2\}$. Due to $\lambda_1^k > 0$ and $\lambda_2^k > 0$, we 
	further obtain
	\begin{align*}
		\mathsmaller\sum\nolimits_{i\in I}  
		u_i \partial\bigl(\sgn(\lambda^k_i)\, F_i\bigr)(\bar x;d)
		= 
		u_1 \partial F_1(\bar x;d) + 
		u_2 \partial F_2(\bar x;d)
		=
		\left\{(u_1+u_2) \begin{pmatrix} -1 \\ 1 \end{pmatrix}\right\},
	\end{align*}
	which contains the zero vector
	only for $u_1=u_2=0$. Thus, \ref{dir_subMFCI} of ODP-subMFC$(d)$ at $\bar x$ 
	is fulfilled as well, such that ODP-subMFC$(d)$ at $\bar x$ is satisfied. 
	
	Analogously, one can verify ODP-subMFC($d$) at $\bar x$ for the remaining critical direction 
	$d\coloneqq\frac{1}{\sqrt{2}}(-1,-1) \in C(\bar{x}) \cap \mathbb S$
	using the sequence $\{ (x^k,\lambda^k,\delta^k,\varepsilon^k )\}_{k=1}^\infty$
	with
	\[
		x^k \coloneqq \left(-\frac{1}{k},-\frac{1}{k}\right),
		\quad 
		\lambda^k \coloneqq \left(\frac{1}{k},-\frac{1}{k}\right),
		\quad 
		\delta^k \coloneqq (0,0),
		\quad 
		\varepsilon^k \coloneqq \left(0,-\frac{2}{k}\right),
		\quad
		\forall k\in\N.
	\]

	Finally, with multiplier $\lambda \coloneqq (0,0)$, it is straightforward to see that 
	$\bar x$ is indeed M-stationary in directions $d=\pm \frac{1}{\sqrt{2}}(1,1) \in C(\bar{x}) \cap \mathbb S$.
\end{example}

In the remainder of this section, we want to compare (directional) ODP-subMFC to the 
constraint qualifications introduced in \cref{sec:dir_QCs}. 
For the special case where \eqref{eq:orthodisjunctive_problem} models 
inequality-constrained nonsmooth optimization problems, 
a comprehensive comparison to various qualification conditions can be found 
in \cite[Section 4]{KaemingFischerZemkoho2025}.

To start, we present the following auxiliary result.
In order to not clutter the reading flow,
its mainly technical and lengthy proof is presented in \cref{sec:missing_proofs}.
\begin{lemma}\label{lem:lambda_neq_0}
	Let $\bar x\in\R^n$ be feasible for \eqref{eq:orthodisjunctive_problem}.
	Then the following assertions hold.
	\begin{enumerate}
	\item\label{item:lambda_neq_0_nondir}
	If $\bar x$ is an AM-stationary point of \eqref{eq:orthodisjunctive_problem}, 
	then there exists a sequence 
	$\{ (x^k, \lambda^k, \delta^k, \varepsilon^k) \}_{k=1}^\infty \subset \R^{n+\ell+\ell+n}$
	that is AM-stationary w.r.t.\ $\bar x$ 
	and fulfills 
	$\lambda^k_i \neq 0$ for all $i\in I(x^k,\delta^k)$ and $k\in\N$.
	\item\label{item:lambda_neq_0_dir}
	Fix $d\in \widehat C(\bar x)\cap\mathbb S$.
	If $\bar x$ is a local minimizer of \eqref{eq:orthodisjunctive_problem}
	such that $\nabla f(\bar x)\neq 0$ holds, 
	then there exists a sequence 
	$\{ (x^k, \lambda^k, \delta^k, \varepsilon^k) \}_{k=1}^\infty \subset \R^{n+\ell+\ell+n}$
	that is AM-stationary w.r.t.\ $\bar x$ 
	in direction $d$ and fulfills 
	$\lambda^k_i \neq 0$ for all $i\in I(x^k,\delta^k)$ and $k\in\N$.
	\end{enumerate}
\end{lemma}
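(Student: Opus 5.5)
The plan is to start from a given AM-stationary sequence (for part (a)), respectively the one produced in the proof of \cref{thm:locmin_dir_akkt_II} (for part (b)). We then modify the perturbations $\delta^k$ componentwise so that every index in $I(x^k,\delta^k)$ carrying a vanishing multiplier is removed from this index set. The exploited structure is that $\Gamma$ is a finite union of boxes. Locally around $F(x^k)-\delta^k$, the set $\Gamma$ coincides with $\bigcup_{j\in J(x^k,\delta^k)}\Gamma_j$. Moreover, an index $i$ with $\lambda^k_i=0$ is not needed in the normal cone description.

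\textbf{Part (a).} Fix an AM-stationary sequence and $k\in\N$, and set $y^k\coloneqq F(x^k)-\delta^k$. Let $L_k\coloneqq\{i\in I(x^k,\delta^k)\mid\lambda^k_i=0\}$. For each $i\in L_k$ we shift the $i$-th component of $y^k$ by a tiny amount $\tau_k\pm$ into the interior of the interval $[a_i^j,b_i^j]$ for all active $j$ having $y^k_i$ as an endpoint. If $a_i^j=b_i^j$ for some active $j$, we instead move so that this $j$ becomes inactive. This yields a new point $\tilde y^k$ with $\nnorm{\tilde y^k-y^k}\le 1/k$, and we set $\tilde\delta^k\coloneqq F(x^k)-\tilde y^k$.

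The key claim is then $\lambda^k\in N_\Gamma(\tilde y^k)$ together with $I(x^k,\tilde\delta^k)=I(x^k,\delta^k)\setminus L_k$, possibly after a further restriction of the active pieces. For this, recall that $N_{\Gamma_j}(y)$ is a product of the one-dimensional normal cones of the intervals. Also recall from \cite[Remark 4.1]{KaemingMehlitz2025} that $N_\Gamma(y)$ is the union, over suitable subsets of active pieces, of such products. Since $\lambda^k_i=0$ for $i\in L_k$, the $i$-th factor only needs to contain $0$, which is always the case.

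\textbf{Main obstacle.} The delicate point is moving off a point that is an endpoint for some active box but interior for another, or an endpoint of a degenerate interval. Shifting may deactivate some $\Gamma_j$, and the normal cone may be a union over those $j$ rather than an intersection. I would handle this by choosing one active $j^\ast$ for which $\lambda^k\in N_{\Gamma_{j^\ast}}(y^k)$, which is possible for a union of finitely many convex polyhedra in the limiting sense. I would then move $\tilde y^k$ inside $\Gamma_{j^\ast}$, strictly interior in the components $i\in L_k$ with $a_i^{j^\ast}<b_i^{j^\ast}$. In doing so I would keep the components with $\lambda_i^k\ne0$ fixed, and tolerate that other boxes become inactive. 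Degenerate components $a_i^{j^\ast}=b_i^{j^\ast}$ with $\lambda_i^k=0$ remain in $I$, so the claim must then be weakened. Hence I would verify carefully which definition of $I(x,\delta)$ makes these components removable; if necessary, I would choose $j^\ast$ among boxes minimizing the degenerate indices. The stationarity inclusion in $x$ is untouched, and $\tilde\delta^k\to0$.

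\textbf{Part (b).} I take the sequence from \cref{rem:nonvanishing_seq}: since $\nabla f(\bar x)\ne0$, it has $\delta^k\ne0$, $\nnorm{\lambda^k}$ bounded away from zero, and $\delta^k=\lambda^k/\mu(k)$. Hence $\lambda^k_i=0$ forces $\delta^k_i=0$. I apply the same modification with shifts $\tau_k$ chosen so small that $\tau_k\in\oo(\nnorm{x^k-\bar x})$ and $\tau_k\in\oo(\nnorm{\delta^k})$. Then \eqref{eq:sequences_dir_conv}, \eqref{eq:multipliers_not_arbitrarily_unbounded} (even the convergence to zero) and \eqref{eq:perturbation_vs_multiplier} survive. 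Indeed, the new $\tilde\delta^k$ differs from $\delta^k$ only in components where $\lambda^k$ vanishes, by an amount negligible relative to $\nnorm{\delta^k}$, so $\tilde\delta^k/\nnorm{\tilde\delta^k}-\lambda^k/\nnorm{\lambda^k}\to0$.
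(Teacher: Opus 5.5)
Your construction only moves $\delta^k$ and keeps $\lambda^k$ fixed, and this cannot prove the lemma. Suppose $i\in I(x^k,\delta^k)$ has $\lambda^k_i=0$ and every box active near $F(x^k)-\delta^k$ is degenerate in component $i$ (equality constraints, the $\{0\}$-factors of complementarity or switching pieces). Then $i$ belongs to $I$ at every nearby point of $\Gamma$, so no admissible shift removes it. The simplest instance is $\Gamma\coloneqq\{0\}\subset\R$, $F(x)\coloneqq x$, $f\equiv 0$, $\bar x\coloneqq 0$ with the AM-stationary sequence $x^k=\lambda^k=\delta^k=\varepsilon^k=0$: every admissible $\tilde\delta^k$ gives $I(x^k,\tilde\delta^k)=\{1\}$ while $\lambda^k_1=0$. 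You notice this yourself (``the claim must then be weakened''), but the lemma admits no weakening, and no choice of $j^\ast$ helps.

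The missing idea is to perturb the multiplier itself. Take $y^{k,\iota}\to F(x^k)-\delta^k$ and $\lambda^{k,\iota}\to\lambda^k$ with $\lambda^{k,\iota}\in\widehat N_\Gamma(y^{k,\iota})$. Along a subsequence, $\widehat N_\Gamma(y^{k,\iota})=\prod_i L_i$ with $L_i\in\{\{0\},\R_+,\R_-,\R\}$ independent of $\iota$, so by closedness it also contains $\lambda^k$. For indices with $\lambda^k_i=0$ and $L_i\neq\{0\}$, set $\tilde\lambda^k_i\coloneqq\pm 1/k$ with sign compatible with $L_i$. Put $\tilde\varepsilon^k\coloneqq\varepsilon^k+\sum_i|\tilde\lambda^k_i|\hat s^k_i$ with $\hat s^k_i\in\partial(\sgn(\tilde\lambda^k_i)F_i)(x^k)$. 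This is still a null sequence because these subgradients are bounded, and \cref{ass:subMFC} turns the result back into $\tilde\varepsilon^k-\nabla f(x^k)\in\partial\langle\tilde\lambda^k,F\rangle(x^k)$. This is Case~2.1 of the paper. Indices whose approximating multipliers are nonzero are handled by the diagonal replacement $(\lambda^{k,\iota(k)},F(x^k)-y^{k,\iota(k)})$ (Case~1). A shift as you propose is only needed when $L_i=\{0\}$ (Case~2.2), and the cases are then iterated. In part (b), the new entries must additionally be scaled, e.g., by $\min\{1,\nnorm{\lambda^k}\}/(k\sqrt{\ell})$, so that \eqref{eq:perturbation_vs_multiplier} and \eqref{eq:multipliers_not_arbitrarily_unbounded} survive.

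Independently, your selection rule for $j^\ast$ does not give your key claim $\lambda^k\in N_\Gamma(\tilde y^k)$. Boxes other than $j^\ast$ may remain active at $\tilde y^k$ and shrink the normal cone. Take $\Gamma_1\coloneqq[0,\infty)^3$, $\Gamma_2\coloneqq(-\infty,0]\times[0,\infty)^2$, $\Gamma_3\coloneqq[0,\infty)^2\times(-\infty,0]$, $y\coloneqq 0$, and $\lambda\coloneqq(-1,-1,0)$. Then $\lambda\in N_\Gamma(y)$ via the points $(0,0,-t)$, where only $\Gamma_3$ is active, and also $\lambda\in N_{\Gamma_1}(y)$. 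Yet your shift with $j^\ast=1$ gives $\tilde y=(0,0,\tau)$. Locally around $\tilde y$ the set $\Gamma$ coincides with $\R\times[0,\infty)\times\R$, so $N_\Gamma(\tilde y)=\{0\}\times\R_-\times\{0\}\not\ni\lambda$.

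The fix is again to shift at the points $y^{k,\iota}$ of the approximating sequence rather than at $F(x^k)-\delta^k$. There the regular normal cone has the product form above. A small shift in a component $i_0$ with $L_{i_0}=\{0\}$ only deactivates boxes while keeping at least one active, whence $\widehat N_\Gamma(y^{k,\iota})\subset\widehat N_\Gamma(\tilde y)$. The smallness requirements you impose on the shifts in part (b), relative to $\nnorm{\delta^k}$ and $\nnorm{x^k-\bar x}$, are of the right kind and mirror the paper's bounds. They just have to be combined with the scaled multiplier perturbation described above.
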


In the following result, we compare ODP-subMFC and NNAMCQ
as well as the directional versions of both conditions. The corresponding proof,
which exploits \cref{lem:lambda_neq_0},
is again provided in \cref{sec:missing_proofs}.

\begin{proposition}\label{prop:subMFC_vs_NNAMCQ}
	Let $\bar x\in\R^n$ be a local minimizer of \eqref{eq:orthodisjunctive_problem}.
	Then the following relations hold.
	\begin{enumerate}
		\item\label{item:NNAMCQ_ODPsubMFC}
			If NNAMCQ is satisfied at $\bar{x}$, then ODP-subMFC holds at $\bar{x}$.
		\item\label{item:FOSCMS_dirODPsubMFC} 
			Fix $d\in\widehat C(\bar x)\cap\mathbb S$ and let $\nabla f(\bar x)\neq 0$ hold. 
			If FOSCMS$(d)$ is satisfied at $\bar{x}$,
			then ODP-subMFC$(d)$ holds at $\bar{x}$.
	\end{enumerate}
\end{proposition}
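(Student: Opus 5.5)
The plan is to pick, via \cref{lem:lambda_neq_0}, an AM-stationary sequence whose multipliers are nonzero on the whole active index set. We then take $I\coloneqq I(x^k,\delta^k)$, which is constant by \cref{rem:constant_sets}, and show that condition~\ref{subMFCI} (respectively~\ref{dir_subMFCI}) is inherited from NNAMCQ (respectively FOSCMS$(d)$).

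For assertion~\ref{item:NNAMCQ_ODPsubMFC}, \cref{lem:approx_MSt_NC} shows that $\bar x$ is AM-stationary. \cref{lem:lambda_neq_0}\,\ref{item:lambda_neq_0_nondir} then provides an AM-stationary sequence with $\lambda^k_i\neq 0$ for all $i\in I(x^k,\delta^k)$. Passing to a subsequence, we may assume that $I\coloneqq I(x^k,\delta^k)$ and the signs $\sigma_i\coloneqq\sgn(\lambda^k_i)$ do not depend on $k$, so $\sigma_i\in\{-1,1\}$ for $i\in I$. We also have $I\subset I(\bar x)$. Indeed, $J(x^k,\delta^k)$ is constant, and every $j$ in it satisfies $F(x^k)-\delta^k\in\Gamma_j$; closedness gives $F(\bar x)\in\Gamma_j$. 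If $F_i(x^k)-\delta^k_i$ equals the same endpoint of $\Gamma_j$ for all $k$, then so does $F_i(\bar x)$. Since $I(x^k,\delta^k)$ is constant while only finitely many pairs $(j,\text{endpoint})$ exist, a further subsequence fixes a witnessing pair for each $i\in I$.

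If $I=\emptyset$, nothing is left to check. Otherwise, suppose some $u\geq0$, $u\neq0$, supported in $I$, satisfies $0\in\sum_{i\in I}u_i\partial(\sigma_iF_i)(\bar x)$. Set $\lambda_i\coloneqq u_i\sigma_i$ for $i\in I$ and $\lambda_i\coloneqq0$ otherwise, so that $\lambda\neq0$, $|\lambda_i|=u_i$, and $\sgn(\lambda_i)=\sigma_i$ wherever $u_i>0$. By \cref{ass:subMFC} we get $0\in\partial\langle\lambda,F\rangle(\bar x)$.

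The main obstacle is to show $\lambda\in N_\Gamma(F(\bar x))$; this is where the box structure of $\Gamma$ enters. Because $\lambda^k\in N_\Gamma(F(x^k)-\delta^k)$ with constant signs on the constant set $I$, robustness of the normal cone applied to the normalized multipliers would yield a limit with the same sign pattern. A different $u$, however, requires its own argument. I would use the explicit description of $N_\Gamma$ at the points $F(x^k)-\delta^k$, as in \cite[Remark 4.1(b)]{KaemingMehlitz2025}, for unions of boxes. At a point $y$ lying in the boxes $\Gamma_j$, $j\in J$, the regular normal cone is $\bigcap_{j\in J}\widehat N_{\Gamma_j}(y)$, and each $\widehat N_{\Gamma_j}(y)$ is a product of intervals; in particular it is a product of sign conditions on the active components. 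Since $\lambda^k$ has nonzero entries with signs $\sigma_i$ on all of $I$, each admissible sign condition for the index $i$ at $F(x^k)-\delta^k$ allows sign $\sigma_i$. Hence any vector supported in $I$ with $\sgn(\lambda_i)\in\{0,\sigma_i\}$ also belongs to $N_\Gamma(F(x^k)-\delta^k)$; the regular/limiting distinction is handled by the same product argument at nearby points. Letting $k\to\infty$ with the constant vector $\lambda$ and using robustness gives $\lambda\in N_\Gamma(F(\bar x))$. NNAMCQ then forces $\lambda=0$, which is a contradiction, so~\ref{subMFCI} holds.

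Assertion~\ref{item:FOSCMS_dirODPsubMFC} follows the same route. We now use \cref{lem:lambda_neq_0}\,\ref{item:lambda_neq_0_dir}, which needs $\nabla f(\bar x)\neq0$, to obtain an AM-stationary sequence in direction $d$ with nonvanishing active multipliers. All subdifferentials are replaced by their directional versions at $(\bar x;d)$, again via \cref{ass:subMFC}. The final limit uses \eqref{eq:suitable_repr_of_constraint_viol} together with robustness of the directional limiting normal cone, which yields $\lambda\in N_\Gamma(F(\bar x);F'(\bar x;d))$. FOSCMS$(d)$ then gives the contradiction.
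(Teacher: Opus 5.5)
Your proposal is correct and follows essentially the same route as the paper. You take the sequence from \cref{lem:lambda_neq_0} and set $I\coloneqq I(x^k,\delta^k)$. You then transfer the sign pattern of $\lambda^k$ to the vector built from $u$ using the product structure of the regular normal cones of the union of boxes. Finally, you pass to the limit (in part~(b) directionally, via \eqref{eq:suitable_repr_of_constraint_viol}) and invoke NNAMCQ or FOSCMS$(d)$.

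The differences are minor. The paper handles the regular/limiting distinction by citing a representation of $N_\Gamma$ as a union of regular normal cones, whereas you argue directly with approximating regular normals. You also explicitly verify $I\subset I(\bar x)$, which the paper leaves implicit.
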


Let us note that, given a local minimizer $\bar x\in\R^n$ of \eqref{eq:orthodisjunctive_problem}, 
requiring $\nabla f(\bar x)\neq 0$ in \cref{prop:subMFC_vs_NNAMCQ}\,\ref{item:FOSCMS_dirODPsubMFC}
is not restrictive. Indeed, in the case where $\nabla f(\bar x)=0$ holds,
$\bar x$ is trivially M-stationary in each critical direction with associated multiplier zero,
and a qualification condition is not required.

The following example shows that ODP-subMFC can be violated while other
standard constraint qualifications are satisfied.
\begin{example}\label{ex:MSCQholds_ODPsubMFCviol}
	Recall the optimization problem from \cref{ex:dir_but_not_normal_subMFC},
	for which we have seen that ODP-subMFC is not valid at the global minimizer
	$\bar x \coloneqq (0,0)$.
	As the optimization problem considers a piecewise linear function $F$
	and a polyhedral set $\Gamma$, the associated feasibility mapping is polyhedral. 
	Thus, it is metrically subregular at each point of its graph, which particularly
	implies that MSCQ holds at $\bar x$.
	By \cref{lem:preimage_rule_dir_MS}\,\ref{lem:preimage_rule_dir_MS_nondir} 
	this clearly implies that GACQ and GGCQ 
	hold at $\bar x$ as well.
	Moreover, $\bar x$ is AM-regular as we find 
	$N_\Gamma (F(\bar x)) = \R\times\R$ and 
	$
		\{ \xi \in \partial \langle \lambda, F \rangle (\bar x) \,|\,\lambda\in N_\Gamma(F(\bar x))\}
		=
		\R\times\R.
	$
\end{example}

In contrast, the following example, which is taken from \cite[Example 4.8]{KaemingFischerZemkoho2025},
shows that MSCQ and AM-regularity can be violated while ODP-subMFC holds, 
and that analogous relations
hold for the respective directional versions of the conditions.
\begin{example}\label{ex:ODPsubMFCholds_MSCQviol}
	Consider the optimization problem
	\[
		\min\limits_x\quad (x_1+2)^2 + (x_2-1)^2 
		\quad \textup{s.t.} \quad 
		F(x) \coloneqq x_2^3 - 3x_2 - g(x_1) \in \Gamma \coloneqq \R_-
	\]
with
\[ 
	g(x_1) 
	\coloneqq 
	\begin{cases}
    		x_1^3-3x_1 & x_1\in(-\infty, -2) \cup (1,\infty) \\
    		-2 & x_1\in[-2,1].
  	\end{cases} 
\]
Its feasible set $X$ is depicted in \cite[Figure A1]{KaemingFischerZemkoho2025}, and the
unique global minimizer $\bar x\coloneqq (-2,1)$ fulfills
\[
	\widehat C(\bar{x}) = T_X(\bar{x}) = \R_+ \times \{ 0 \}, 
	\quad 
	C(\bar x) = T_{F,\Gamma}^{\textup{lin}}(\bar{x}) = \R_+ \times \R.
\]
Hence, GGCQ does not hold at $\bar x$.

It can be verified that the sequence 
$\{ (x^k,\lambda^k,\delta^k,\varepsilon^k) \}_{k=1}^\infty \subset \R^{2+1+1+2}$
with
\[
	x^k \coloneqq \left(-2+\frac{1}{k}, 1+\frac{1}{k}\right),
	\quad \lambda^k \coloneqq 0,
	\quad \delta^k \coloneqq \frac{6}{k^2},
	\quad \varepsilon^k \coloneqq \left( \frac{2}{k}, \frac{2}{k}\right),
	\qquad \forall k\in\N,
\]
taken from \cite[Example 4.8]{KaemingFischerZemkoho2025}, is AM-stationary 
w.r.t.\ $\bar x$ in direction $d\coloneqq \frac{1}{\sqrt{2}}(1,1)\in C(\bar{x})\cap \mathbb{S}$
with $I(x^k,\delta^k) = \emptyset$ for all $k\in\N$. Thus, choosing $I \coloneqq \emptyset$, 
both ODP-subMFC and ODP-subMFC$(d)$ are clearly satisfied at $\bar{x}$.

To verify MSCQ$(d)$ at $\bar{x}$ for $d=\frac{1}{\sqrt{2}}(1,1)$, 
we need to find positive constants $\varepsilon>0$, 
$\delta>0$, and $\kappa>0$ such that
\[
	\dist(x,F^{-1}(\Gamma)) \leq \kappa \dist(F(x),\Gamma),
	\qquad
	\forall x\in \{\bar{x}\} + \mathbb B_{\varepsilon,\delta}(d).
\]
Fix some small $\varepsilon>0$ and $\delta>0$. Then $x_\tau \coloneqq (-2+\tau,1+\tau)$ belongs to 
$\{\bar{x}\} + \mathbb B_{\varepsilon,\delta}(d)$ for all 
$\tau\in [0,\frac{\varepsilon}{\sqrt{2}}]$.
It follows from \cite[Figure A1]{KaemingFischerZemkoho2025} that 
$\dist(x_\tau,F^{-1}(\Gamma)) = \dist(x_\tau,X)=\tau$, and $\dist(F(x_\tau),\Gamma)=\tau^3+3\tau^2$
can be computed as well. As in \cite[Example A.1]{KaemingFischerZemkoho2025},
this immediately shows the violation of MSCQ$(d)$ at $\bar{x}$, which implies that MSCQ 
at $\bar x$ does not hold as well.

Concerning AM-regularity of $\bar x$ in direction $d=\frac{1}{\sqrt{2}}(1,1)$, let us consider 
the sequence $\{ (x^k, \lambda^k, \delta^k, \xi^k) \}_{k=1}^\infty\subset \R^{2+1+1+2}$ with 
\[
	x^k \coloneqq \left(-2+\frac{1}{k}, 1+\frac{1}{k}\right),
	\quad \lambda^k \coloneqq k,
	\quad \delta^k \coloneqq \frac{3}{k^2}+\frac{1}{k^3},
	\quad \xi^k \coloneqq \left(0, 6+\frac{3}{k}\right),
	\qquad \forall k\in\N
\]
and $\xi \coloneqq (0,6)$, for which \eqref{eq:approx_stat_dir}, \eqref{eq:dir_AM_reg_conv},
and $x^k\neq \bar x$ for all $k\in\N$ are fulfilled.
Observe that $F(x^k)-\delta^k=0$ and, thus, $\lambda^k\in N_\Gamma(F(x^k)-\delta^k)$
are valid for all $k\in\N$.
Moreover, for each $k\in\N$, noting that $F$ is smooth at $x^k$ yields
\begin{align*}
	\partial \langle \lambda^k, F \rangle (x^k) 
	=
	\left\{k\nabla F(x^k)\right\}
	=
	\left\{\begin{pmatrix} 0 \\ 6 + \frac{3}{k} \end{pmatrix} \right\}.
\end{align*}
Hence, the sequence fulfills \eqref{eq:dir_AM_reg_stat} for all $k\in\N$ as well. 
However, 
\begin{align*}
	\xi \notin
	\{ \xi' \in \partial \langle \lambda, F \rangle (\bar x) \,|\,\lambda\in N_\Gamma(F(\bar x)) \}
	&=
	\{ \xi' \in |\lambda| \partial(\sgn(\lambda) F)(\bar x) \,|\,\lambda\in \R_+ \}
	\\
	&=
	\left\{ \xi' \in \lambda ( [ -9, 0 ] \times\{ 0 \}) \,\middle|\,\lambda\in \R_+ \right\}
	\\
	&=
	\R_- \times \{ 0 \},
\end{align*}
such that $\bar x$ is neither AM-regular nor (strongly) AM-regular in direction $d$.
\end{example}

The subsequent proposition summarizes our above findings on the relation of
(directional) ODP-subMFC to (directional) MSCQ, (directional) AM-regularity, and GGCQ.

\begin{proposition}\label{prop:ODPsubMFC_otherCQs}
Consider a local minimizer $\bar x\in\R^n$ of \eqref{eq:orthodisjunctive_problem}.
Then the following relations hold.
	\begin{enumerate}
		\item\label{item:MSCQ_ODPsubMFC} 
			Conditions MSCQ at $\bar x$ and ODP-subMFC at $\bar x$ are independent.
		\item\label{item:dirMSCQ_dirODPsubMFC} 
			Fix $d\in C(\bar x)\cap\mathbb S$. 
			Then ODP-subMFC$(d)$ at $\bar x$ can hold even if MSCQ$(d)$ at $\bar x$ is violated.
		\item\label{item:AMreg_ODPsubMFC} 
			Conditions AM-regularity of $\bar x$ and ODP-subMFC at $\bar x$ are independent.
		\item\label{item:dirAMreg_dirODPsubMFC} 
			Fix $d\in C(\bar x)\cap\mathbb S$. 
			Then ODP-subMFC$(d)$ at $\bar x$ can hold even if (strong) 
			AM-regularity of $\bar x$ in direction $d$ 
			is violated. 
		\item\label{item:GGCQ_ODPsubMFC} 
			Conditions GGCQ at $\bar x$ and ODP-subMFC at $\bar x$ are independent.
		\item\label{item:GGCQ_dirODPsubMFC} 
			Fix $d\in C(\bar x)\cap\mathbb S$. 
			Then ODP-subMFC$(d)$ at $\bar x$ can hold even if GGCQ at $\bar x$ is violated.
	\end{enumerate}
\end{proposition}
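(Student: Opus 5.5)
The proposition is a collection of independence and non-implication claims, so each item needs one or two examples, almost all of them already worked out above. I would assemble them item by item. Independence of two conditions needs an example where the first holds and the second fails, and one where the reverse happens. A one-sided claim of the form ``ODP-subMFC$(d)$ can hold even if $\ldots$ is violated'' needs only one example. Throughout, the examples must concern a local minimizer of \eqref{eq:orthodisjunctive_problem}. The problems in \cref{ex:dir_but_not_normal_subMFC,ex:ODPsubMFCholds_MSCQviol} have the required form: $\Gamma$ is a product of intervals, $t=1$, and at most one component of $F$ is nonsmooth at each point (or \cref{ass:subMFC} can be checked directly), and the points considered are global minimizers.

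For \ref{item:MSCQ_ODPsubMFC}, \ref{item:AMreg_ODPsubMFC}, and \ref{item:GGCQ_ODPsubMFC}, the direction ``CQ holds, ODP-subMFC fails'' is supplied by \cref{ex:MSCQholds_ODPsubMFCviol}, which builds on \cref{ex:dir_but_not_normal_subMFC}. At $\bar x=(0,0)$ there, MSCQ, GGCQ, and AM-regularity all hold, while ODP-subMFC was shown to fail. The reverse direction comes from \cref{ex:ODPsubMFCholds_MSCQviol}. There, ODP-subMFC holds with $I=\emptyset$, MSCQ fails (because MSCQ$(d)$ fails for $d=\frac1{\sqrt2}(1,1)$), GGCQ fails because $T_X(\bar x)=\R_+\times\{0\}\neq\R_+\times\R=T^{\textup{lin}}_{F,\Gamma}(\bar x)$ gives $\widehat N_X(\bar x)=\R_-\times\R\neq\R_-\times\{0\}$, and AM-regularity of $\bar x$ fails. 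For the last point, the exhibited sequence violates AM-regularity in direction $d$, and \cref{rem:relations_dir_CQ} then shows that AM-regularity of $\bar x$ itself fails as well.

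For the one-sided directional items \ref{item:dirMSCQ_dirODPsubMFC}, \ref{item:dirAMreg_dirODPsubMFC}, and \ref{item:GGCQ_dirODPsubMFC}, the same \cref{ex:ODPsubMFCholds_MSCQviol} with $d=\frac1{\sqrt2}(1,1)\in C(\bar x)\cap\mathbb S$ suffices. In that example:
\begin{itemize}
\item ODP-subMFC$(d)$ holds via the given AM-stationary sequence in direction $d$ with $I=\emptyset$;
\item MSCQ$(d)$ fails;
\item AM-regularity in direction $d$ fails, and by \cref{rem:relations_dir_CQ}\,\ref{item:relations_dir_CQ_b} so does strong AM-regularity in direction $d$;
\item GGCQ fails, as computed above.
\end{itemize}
The phrase ``fix $d$'' is read existentially, as the remaining statements indicate: for some $d$ such a situation occurs.

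The only step needing real care is confirming that the data of \cref{ex:ODPsubMFCholds_MSCQviol} fit the standing assumptions. First, $F$ is locally Lipschitz and directionally differentiable; it is piecewise smooth, with $g$ continuous and kinked only at $-2$ and $1$. Second, \cref{ass:subMFC} is automatic since $\ell=1$. Third, the stated AM-stationary sequence satisfies \eqref{eq:approx_stat_dir}. With $\lambda^k=0$, conditions \eqref{eq:perturbation_vs_multiplier} and \eqref{eq:multipliers_not_arbitrarily_unbounded} are trivial, and $\delta^k/\nnorm{x^k-\bar x}=O(1/k)\to 0$. Finally, the set $I(x^k,\delta^k)$ is empty because $F(x^k)-\delta^k<0$. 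I expect this verification to be routine, so the proof amounts to citing the examples.
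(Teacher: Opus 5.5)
Your proposal is correct and follows the paper's own proof. Items (a), (c), (e) come from \cref{ex:MSCQholds_ODPsubMFCviol} in one direction and \cref{ex:ODPsubMFCholds_MSCQviol} in the other, and the one-sided directional items (b), (d), (f) come from \cref{ex:ODPsubMFCholds_MSCQviol} with $d=\frac{1}{\sqrt2}(1,1)$. Your extra verifications are accurate: the polar-cone computation showing GGCQ fails, the use of \cref{rem:relations_dir_CQ} to pass to non-directional and strong directional AM-regularity, and $I(x^k,\delta^k)=\emptyset$; the one cosmetic slip is that $g$ is actually $C^1$ at $1$, so its only kink is at $-2$.
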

\begin{proof}
	Assertions \ref{item:MSCQ_ODPsubMFC}, \ref{item:AMreg_ODPsubMFC}, and \ref{item:GGCQ_ODPsubMFC}  
	follow from 
	\cref{ex:MSCQholds_ODPsubMFCviol,ex:ODPsubMFCholds_MSCQviol}, whereas assertions
	\ref{item:dirMSCQ_dirODPsubMFC}, \ref{item:dirAMreg_dirODPsubMFC}, and \ref{item:GGCQ_dirODPsubMFC} 
	are derived from \cref{ex:ODPsubMFCholds_MSCQviol}.
\end{proof}

\begin{remark}
	We claim that assertions \ref{item:dirMSCQ_dirODPsubMFC}, \ref{item:dirAMreg_dirODPsubMFC} 
	and \ref{item:GGCQ_dirODPsubMFC} of \cref{prop:ODPsubMFC_otherCQs}
	can be refined analogously to their non-directional counterparts, 
	i.e., that examples exist which show that ODP-subMFC$(d)$ and MSCQ$(d)$,
	ODP-subMFC$(d)$ and (strong) AM-regularity of $\bar x$ in direction $d$, as well as
	ODP-subMFC$(d)$ and GGCQ are even independent.
	However, as the above implications suffice to justify the meaning of ODP-subMFC($d$)
	as an alternative directional qualification condition, and 	
	in the light of \cref{ex:normal_but_not_dir_subMFC,ex:dir_but_not_normal_subMFC}
	as well as \cite[Example 3.12]{KaemingFischerZemkoho2025}, 
	which demonstrate that the presentation of such examples would be a laborious task,
	we omit associated considerations.
\end{remark}

\section{Conclusions}\label{sec:conclusions}

In this paper,
we have shown that approximate directional stationarity conditions provide
necessary optimality conditions for comparatively general optimization problems
with geometric constraints even in the absence of a qualification condition,
see \cref{thm:locmin_dir_akkt_II,thm:locmin_dir_akkt},
complementing result from the literature.
Building upon this insight, 
we suggested a directional qualification condition
that can be used to infer directional stationarity of local minimizers,
see \cref{thm:ODPsubMFC}.
This condition, called directional ODP-subMFC, is based on a single 
sequence quantifying approximate directional stationarity,
which is much in contrast to standard so-called approximate constraint qualifications
that require control over \emph{all} such sequences,
see \cref{sec:approx_CQs}. By means of examples and \cref{prop:subMFC_vs_NNAMCQ},
we illustrated that directional ODP-subMFC
is a comparatively mild qualification condition.

In the literature,
the standard approach to infer directional stationarity of local minimizers is
to exploit a directional version of the metric subregularity constraint qualification,
see \cref{cor:MStat_via_MS}.
The present paper enriches this framework 
by demonstrating three alternative ways to obtain directional stationarity
of local minimizers:
\begin{enumerate}
	\item The local minimizer has to satisfy a generalized version of Guignard's constraint
		qualification, and the linearized feasibility mapping has to enjoy a directional
		metric subregularity property, see \cref{lem:dir_MSt_under_GACQ}.
	\item A generalized version of Abadie's constraint qualification has to hold
		at the local minimizer of interest,
		and an approximate directional constraint qualification has to be valid,
		see \cref{cor:dir_asymp_stat_under_GACQ,cor:Mstat_via_dir_AM_reg}.
	\item At the local minimizer of interest, directional ODP-subMFC has to be satisfied,
		see \cref{lem:approx_MSt_NC} and \cref{thm:ODPsubMFC}.
\end{enumerate}
Particularly, by means of illustrative examples, we have shown that these approaches can identify
directional stationarity of local minimizers in situations where, potentially, the directional version
of the metric subregularity constraint qualification is violated, 
see \cref{ex:GGCQ_without_MSCQ,ex:ODPsubMFCholds_MSCQviol}.

In our future work, we target to elaborate on the benefits of approximate directional
stationarity conditions in numerical optimization. Reinspecting the proofs of
\cref{thm:locmin_dir_akkt_II,thm:locmin_dir_akkt}, 
one may anticipate that external penalty methods generate approximately stationary points
w.r.t.\ critical directions. It remains to be seen whether this, under suitable assumptions,
is true for more reasonable methods from constrained optimization like
augmented Lagrangian or interior point methods.
Following our earlier contributions in \cite[Section~5]{KaemingFischerZemkoho2025}
and \cite[Section~5]{KaemingMehlitz2025}, 
we plan to work out the directional version of ODP-subMFC from \cref{sec:subMFC}
for bilevel and complementarity-constrained optimization problems, respectively.
Both problem classes are highly inherently irregular, 
stirring up the need for weak qualification conditions.
In \cite{KaemingFischerZemkoho2025,KaemingMehlitz2025},
it has been illustrated that the non-directional version of ODP-subMFC is weak enough
to be applicable to these problem classes,
so we expect a similar behavior for its directional counterpart.
The benefits of directional stationarity conditions in bilevel and complementarity-constrained
optimization have been clearly laid out in \cite{BaiYe2022,Gfrerer2014}.


\appendix

\section{Technical details}\label{sec:missing_proofs}

\begin{proof}[Proof of \cref{lem:lambda_neq_0}]
	Let us start with the proof of assertion~\ref{item:lambda_neq_0_nondir}.
	To this end, let us pick a sequence $\{ (x^k, \lambda^k, \delta^k, \varepsilon^k) \}_{k=1}^\infty \subset \R^{n+\ell+\ell+n}$
	that is AM-stationary w.r.t.\ $\bar x$, i.e., we have 
	\eqref{eq:approximate_statonarity} for each $k\in\N$ and the convergences 
	\eqref{eq:sequences_conv}.
	Thus, in particular, there exist subgradients $s_i^k\in \partial(\sgn(\lambda_i^k)\,F_i)(x^k)$ 
	for all 	$i\in\{1,\dotsc,\ell\}$ and $k\in\N$ such that
	\begin{equation}
		\label{eq:approx_stationarity}
		\varepsilon^k
		-
		\nabla f(x^k) 
		=
		\mathsmaller\sum\nolimits_{i=1}^\ell| \lambda_i^k| s_i^k,
		\qquad
		\forall k\in\N,
	\end{equation}
	where we used \cref{lem:sum_rule}.
	For any fixed $k\in\N$, we define the index set $I \coloneqq \{i \in I(x^k,\delta^k) \mid \lambda_i^k=0\}$, 
	which does not depend on $k$ by \cref{rem:constant_sets}.
	This particularly yields
	\begin{equation}
		\label{eq:zero_subgradients_I}
		s_i^k = 0, \qquad \forall i\in I,\, k\in \N.
	\end{equation}
	Moreover, for each $k\in\N$, $\lambda^k \in N_\Gamma(F(x^k)-\delta^k)$ implies that 
	there are sequences $\{ y^{k,\iota}\}_{\iota=1}^\infty\subset\R^\ell$
	and 	$\{ \lambda^{k,\iota} \}_{\iota=1}^\infty\subset\R^\ell$ with 
	$\lambda^{k,\iota}\in \widehat N_\Gamma(y^{k,\iota})$ for all $\iota\in\N$
	as well as $y^{k,\iota}\to F(x^k)-\delta^k$ 
	and $\lambda^{k,\iota}\to\lambda^k$ as $\iota\to\infty$.
	Without loss of generality, we find $I = I_1^k \cup I_2^k$ with
	$I_1^k \coloneqq \{i \in I \mid \lambda_{i}^{k,\iota} = 0\, \forall \iota\in\N\}$
	and $I_2^k \coloneqq \{i \in I \mid \lambda_{i}^{k,\iota} \neq 0\, \forall \iota\in\N\}$ 
	for all $k\in\N$.
	Finally, without loss of generality, we may assume that the latter sets are independent of $k\in\N$,
	i.e., there exist $I_1,I_2\subset I$ such that $I_1=I_1^k$ and $I_2=I_2^k$ hold for all $k\in\N$.
	At this point, exactly one of the following three cases applies, and the remainder of the 
	proof proceeds accordingly.
	
	\textbf{Case 1:} $I_2 \neq \emptyset$.
	We follow the construction from the proof of \cite[Proposition 4.4]{KaemingMehlitz2025}:
	For each $k\in\N$, we pick $\iota(k)\in\N$ large enough such that
	\begin{subequations}\label{eq:diag_sequence_quality}
		\begin{align}
		\label{eq:diag_sequence_quality1}
		&\nnorm{y^{k,\iota(k)}-(F(x^k)-\delta^k)}\leq\frac1k,
		\qquad
		\nnorm{\lambda^{k,\iota(k)}-\lambda^k}\leq\frac1k, 
		\\
		\label{eq:diag_sequence_quality2}
		&\sgn(\lambda_i^{k,\iota(k)}) = \sgn(\lambda_i^k), 
			\qquad \forall i\notin I_2,
		\\
		\label{eq:diag_sequence_quality3}
		&F(x^k)-\delta^k \notin \Gamma_j \Longrightarrow y^{k,\iota(k)} \notin \Gamma_j,
			\qquad \forall j\in\{ 1,\dotsc,t \},
		\\
		\label{eq:diag_sequence_quality4}
		&F_i(x^k)-\delta_i^k \in 
		(a_i^j,b_i^j) \Longrightarrow y_i^{k,\iota(k)} \in (a_i^j,b_i^j),
			\qquad \forall j\in J(x^k,\delta^k),\, i\in \{1,\dotsc,\ell\},
		\end{align}
	\end{subequations}
	and define
	\[
		\tilde\lambda^k
		\coloneqq
		\lambda^{k,\iota(k)},
		\quad
		\tilde\delta^k
		\coloneqq
		F(x^k)-y^{k,\iota(k)},
		\qquad
		\forall k\in\N.
	\]
	Pick any $\hat s_i^k\in \partial(\sgn(\tilde \lambda_{i}^k)\, F_{i})(x^k)$ for all 
	$i\in I_2$, $k\in\N$. Let us further define
	\[	
		\tilde\varepsilon^k
		\coloneqq
		\varepsilon^k 
		+ \mathsmaller\sum\nolimits_{i=1}^\ell (|\tilde \lambda_i^k| - |\lambda_i^k|) s_i^k 
		+ \mathsmaller\sum\nolimits_{i\in I_2} |\tilde \lambda_{i}^k| \hat s_i^k,
		\qquad
		\forall k\in\N,
	\]
	and consider the sequence 
	$\{ (x^k, \tilde\lambda^k, \tilde\delta^k, \tilde\varepsilon^k)\}_{k=1}^\infty\subset\R^{n+\ell+\ell+n}$.
	By construction and \eqref{eq:approx_stationarity} we obtain
	\begin{equation}\label{eq:properties_surrogate_sequences}
		\tilde\varepsilon^k
		-
		\nabla f(x^k) 
		=
		\mathsmaller\sum\nolimits_{i=1}^\ell|\tilde \lambda_i^k|s_i^k 
		+ \mathsmaller\sum\nolimits_{i\in I_2} |\tilde \lambda_{i}^k| \hat s_i^k,
		\quad
		\tilde\lambda^k
		\in 
		\widehat N_\Gamma(F(x^k)-\tilde\delta^k),
		\qquad
		\forall k\in\N.
	\end{equation}
	For all $k\in\N$, we have 
	$s_i^k \in \partial(\sgn(\lambda_i^k)\, F_i)(x^k) = \partial(\sgn(\tilde \lambda_i^k)\, F_i)(x^k)$
	for all $i\notin I_2$ by \eqref{eq:diag_sequence_quality2} and $s^k_{i} = 0$ for all $i\in I_2$ by
	\eqref{eq:zero_subgradients_I}, such that \eqref{eq:properties_surrogate_sequences} 
	and \cref{ass:subMFC} imply
	\[
		\tilde\varepsilon^k
		-
		\nabla f(x^k) 
		\in 
		\mathsmaller\sum\nolimits_{i=1}^\ell|\tilde \lambda_i^k|\partial(\sgn(\tilde \lambda_i^k)\,F_i)(x^k)
		=
		\partial \langle \tilde \lambda^k, F\rangle(x^k),
		\qquad
		\forall k\in\N.
	\]
	Due to \eqref{eq:diag_sequence_quality1} and $\delta^k\to 0$, we find
	\[
		\nnorm{\tilde\delta^k}
		\leq
		\nnorm{\delta^k}+\nnorm{F(x^k)-\delta^k-y^{k,\iota(k)}}
		\leq
		\nnorm{\delta^k} + \frac1k
		\to 
		0.
	\]
	Furthermore, due to \eqref{eq:diag_sequence_quality1}, $\varepsilon^k\to 0$,
	and boundedness of $\{s_i^k\}_{k=1}^\infty$, $i\in\{1,\ldots,\ell\}$,
	and $\{\hat s_i^k\}_{k=1}^\infty$, $i\in I_2$,
	see, e.g., \cite[Theorem~1.22]{Mordukhovich2018},
	\begin{equation}\label{eq:varepsilon_to_0}
		\begin{aligned}
		\nnorm{\tilde\varepsilon^k}
		&\leq
		\nnorm{\varepsilon^k} 
		+ \mathsmaller\sum\nolimits_{i=1}^\ell \bigl||\tilde \lambda_i^k| - |\lambda_i^k|\bigr| \nnorm{s_i^k} 
		+ \mathsmaller\sum\nolimits_{i\in I_2} |\tilde \lambda_{i}^k| \nnorm{\hat s_i^k}
		\\
		&\leq 
		\nnorm{\varepsilon^k} 
		+ \mathsmaller\sum\nolimits_{i=1}^\ell |\tilde \lambda_i^k - \lambda_i^k| \nnorm{s_i^k} 
		+ \mathsmaller\sum\nolimits_{i\in I_2} |\tilde \lambda_{i}^k| \nnorm{\hat s_i^k}
		\\
		&\leq 
		\nnorm{\varepsilon^k} 
		+ \mathsmaller\sum\nolimits_{i=1}^\ell \frac{1}{k} \nnorm{s_i^k} 
		+ \mathsmaller\sum\nolimits_{i\in I_2} \frac{1}{k} \nnorm{\hat s_i^k}
		\to 0
		\end{aligned}
	\end{equation}
	is obtained,
	where we used $\lambda^k_i=0$ for all $i\in I_2$ and $k\in\N$ to apply
	\eqref{eq:diag_sequence_quality1} in the third summand.
	Thus, we have shown that 
	$\{ (x^k, \tilde\lambda^k, \tilde\delta^k, \tilde\varepsilon^k)\}_{k=1}^\infty$
	is an AM-stationary sequence w.r.t.\ $\bar x$.
	Hence, we can restart the proof using the sequences
	$\{ (x^k, \lambda^k, \delta^k, \varepsilon^k)\}_{k=1}^\infty 
	\coloneqq
	\{ (x^k, \tilde\lambda^k, \tilde\delta^k, \tilde\varepsilon^k)\}_{k=1}^\infty$,
	$\{ y^{k,\iota} \}_{\iota=1}^\infty \coloneqq \{ F(x^k) - \tilde \delta^k \}_{\iota=1}^\infty$ for all $k\in\N$,
	and $\{ \lambda^{k,\iota} \}_{\iota=1}^\infty \coloneqq\{ \tilde \lambda^k \}_{\iota=1}^\infty$ for all $k\in\N$,
	for which $\lambda^{k,\iota}\in\widehat{N}_\Gamma(y^{k,\iota})$ 
	follows for all $k,\iota\in\N$ from \eqref{eq:properties_surrogate_sequences},
	and use that the set $I_2$ corresponding to these sequences fulfills $I_2 = \emptyset$.	 
	
	\textbf{Case 2:} $I_2 = \emptyset$, $I_1 \neq \emptyset$. 
	Recall that this means
	$\lambda_i^{k,\iota} = 0$ for all $i\in I_1$ and $k,\iota\in\N$.
	For all $k,\iota\in\N$, we further know $\lambda^{k,\iota} \in \widehat{N}_\Gamma(y^{k,\iota})$.
		Using $J(y) \coloneqq \{ j \in \{1,\dotsc,t\} \mid y \in \Gamma_j\}$, it
		follows from the orthodisjunctive structure in \eqref{eq:orthodisjunctive_problem}
		and \cite[Lemma 2.1]{KaemingMehlitz2025} that
		\begin{equation}\label{eq:NNAMCQ_ODPsubMFC_2}
			\widehat{N}_\Gamma(y^{k,\iota})
			= 
			\bigcap_{j\in J(y^{k,\iota})} \widehat{N}_{\Gamma_j}(y^{k,\iota})
		\end{equation}
		holds for all $k,\iota\in\N$. For every $j\in \{1,\dotsc,t\}$, set $\Gamma_j$ 
		is the product of $\ell$ closed intervals. Thus,
		for all $j\in J(y^{k,\iota})$ and $k,\iota\in\N$, we find
		\begin{equation*}
			\widehat{N}_{\Gamma_j}(y^{k,\iota}) 
			= 
			\prod_{i=1}^\ell K_i^{j,k,\iota}
		\end{equation*}
		for some $K_i^{j,k,\iota} \in \{\{ 0 \}, \R_+, \R_-, \R\}$, 
		$i\in\{1,\dotsc,\ell\}$. For all $i\in\{1,\dotsc,\ell\}$, $j\in J(y^{k,\iota})$, and
		$k,\iota\in\N$,
		we may assume without loss of generality that we can find sets
		$K_i^{j} \in \{\{ 0 \}, \R_+, \R_-, \R\}$ such that
		$K_i^j = K_i^{j,k,\iota}$ holds.
		Consequently, the intersection in \eqref{eq:NNAMCQ_ODPsubMFC_2} implies that we can find
		$L_i \in \{\{ 0 \}, \R_+, \R_-, \R\}$, $i\in\{1,\dotsc,\ell\}$, such that 
		\begin{equation*}
			\widehat{N}_{\Gamma}(y^{k,\iota})
			= 
			\prod_{i=1}^\ell L_i
		\end{equation*}
		holds for all $k,\iota\in\N$.
	Let 
	\[
		I_{11} \coloneqq \{ i\in I_1 \mid L_i = \{ 0\} \},
		\qquad
		I_{12} \coloneqq \{ i\in I_1 \mid L_i \neq \{ 0\} \}.
	\]
	Depending on these sets, we now proceed with either Case 2.1 or Case 2.2.
	
	\textbf{Case 2.1:} $I_2 = \emptyset$, $I_{12} \neq \emptyset$.
	For each $k\in\N$, define
	$\tilde\lambda^k\in\R^\ell$ by
	\[
		\tilde\lambda^k_i 
		\coloneqq 
		\begin{cases}
			\lambda^k_i & i\in\{1,\dotsc,\ell\}\setminus I_{12} \\
			\frac{1}{k} & i\in I_{12}\colon L_i \in \{ \R_+, \R\} \\
			-\frac{1}{k} & i\in I_{12}\colon L_i = \R_-.
		\end{cases}
	\]
	Then, for all $k,\iota\in\N$, as $\lambda^k\in \widehat N_\Gamma(y^{k,\iota})$ follows from
	$\lambda^{k,\iota} \in \widehat N_\Gamma(y^{k,\iota})$, $\lambda^{k,\iota}\to \lambda^k$, 
	and $\widehat N_\Gamma(y^{k,\iota})$ closed, the above definition yields 
	$\tilde\lambda^k \in \widehat N_\Gamma(y^{k,\iota})$.
	Pick any $\hat s_i^k \in \partial (\sgn(\tilde \lambda_i^k)\, F_i)(x^k)$ for all $i\in I_{12}$,
	$k\in\N$, and define
	\begin{equation}\label{eq:tilde_eps_case_2.1}
		\tilde \varepsilon^k \coloneqq \varepsilon^k 
		+ \mathsmaller\sum\nolimits_{i\in I_{12}} |\tilde \lambda_{i}^k| \hat s_i^k,
		\qquad
		\forall k\in\N.
	\end{equation}
	Using \eqref{eq:approx_stationarity}, $\lambda_i^k=0$ for all $i\in I_{12}$, $k\in\N$, 
	$\lambda_i^k = \tilde \lambda_i^k$ for all $i\in\{1,\dotsc,\ell\}\setminus I_{12}$, $k\in \N$,
	and $\sgn(\lambda_i^k)=\sgn(\tilde \lambda_i^k)$ 
	for all $i\in\{1,\dotsc,\ell\}\setminus I_{12}$, $k\in\N$, the sequence 
	$\{ (x^k,\tilde\lambda^k,\delta^k,\tilde\varepsilon^k) \}_{k=1}^\infty 
	\subset\R^{n+\ell+\ell+n}$ fulfills
	\[
		\tilde\varepsilon^k
		-
		\nabla f(x^k) 
		\in 
		\mathsmaller\sum\nolimits_{i=1}^\ell|\tilde \lambda_i^k|\partial(\sgn(\tilde \lambda_i^k)\,F_i)(x^k),
		\quad
		\tilde\lambda^k
		\in 
		N_\Gamma(F(x^k)-\delta^k),	
		\qquad
		\forall k\in\N,
	\]
	where the second part follows from $\tilde\lambda^k \in \widehat N_\Gamma(y^{k,\iota})$ 
	for all $k,\iota\in\N$ and $y^{k,\iota}\to F(x^k)-\delta^k$.
	Moreover, we use the definition of $\tilde\lambda^k$, $\varepsilon^k\to 0$, and boundedness of 
	$\{ \hat s_i^k\}_{k=1}^\infty$, $i\in I_{12}$, see, e.g., \cite[Theorem~1.22]{Mordukhovich2018},
	to find
	\[
		\nnorm{\tilde\varepsilon^k} 
		\leq 
		\nnorm{\varepsilon^k} + \mathsmaller\sum\nolimits_{i\in I_{12}} |\tilde\lambda_i^k| \nnorm{\hat s_i^k}
		\leq 
		\nnorm{\varepsilon^k} + \mathsmaller\sum\nolimits_{i\in I_{12}} \frac{1}{k}\nnorm{\hat s_i^k}
		\to 
		0.
	\]
	Thus, recalling \cref{ass:subMFC}, we have shown that 
	$\{ (x^k, \tilde\lambda^k, \delta^k, \tilde\varepsilon^k)\}_{k=1}^\infty$ 
	is an AM-stationary sequence w.r.t.\ $\bar x$, where $\tilde\lambda_i^k \neq 0$ 
	holds for all $i\in I_2 \cup I_{12} \cup (I(x^k,\delta^k)\setminus I_{11})$, $k\in\N$.
	Hence, we can restart the proof using the sequences
	$\{ (x^k, \lambda^k, \delta^k, \varepsilon^k)\}_{k=1}^\infty 
	\coloneqq
	\{ (x^k, \tilde\lambda^k, \delta^k, \tilde\varepsilon^k)\}_{k=1}^\infty$,
	$\{ y^{k,\iota} \}_{\iota=1}^\infty$ for all $k\in\N$,
	and 
	$\{ \lambda^{k,\iota} \}_{\iota=1}^\infty \coloneqq\{ \tilde \lambda^k \}_{\iota=1}^\infty$ for all $k\in\N$,
	which fulfill $\lambda^{k,\iota}\in \widehat N_\Gamma(y^{k,\iota})$ and preserve the 
	sets $I(x^k,\delta^k)$ and $\widehat N_\Gamma(y^{k,\iota})$ for all $k,\iota\in\N$,
	and use that the sets $I_{12}$ and $I_2$ corresponding to these sequences, thus, satisfy
	$I_{12} = I_2 = \emptyset$.	
	
	\textbf{Case 2.2:} $I_2=I_{12}=\emptyset$, $I_{11}\neq\emptyset$.
	Recall that this means
	\begin{equation*}
		\widehat{N}_{\Gamma}(y^{k,\iota}) 
		= 
		\bigcap_{j\in J(y^{k,\iota})} \widehat N_{\Gamma_j}(y^{k,\iota})
		=
		\prod_{i=1}^\ell L_i,
		\qquad
		\forall k,\iota\in\N
	\end{equation*}
	with $L_i = \{ 0 \}$ for all $i\in I_{11}$ and $L_i \in \{\{ 0 \}, \R_+, \R_-, \R\}$ for
	all $i\in\{1,\dotsc,\ell\}\setminus I_{11}$.
	For each $i\in I_{11}$, the above structure implies that $L_i = \{ 0 \}$ only appears 
	if, for all $k,\iota\in\N$, 
	$y_i^{k,\iota} \in (a_i^{j}, b_i^{j})$
	for some $j\in J(y^{k,\iota})$ or
	if there are $j_1,j_2\in J(y^{k,\iota})$ with $j_1\neq j_2$, 
	$(a_i^{j_1}, b_i^{j_1}) \neq\emptyset$,
	$(a_i^{j_2}, b_i^{j_2}) \neq\emptyset$, 
	$y_i^{k,\iota} = a_i^{j_1}$, and 
	$y_i^{k,\iota} = b_i^{j_2}$.
	Fix $i_0\in I_{11}$ and define, for all $k,\iota\in\N$, 
	\begin{align*}
		J_1^{k,\iota} &\coloneqq \{ j\in J(y^{k,\iota}) \mid y_{i_0}^{k,\iota} = a_{i_0}^{j}\},
		\\
		J_2^{k,\iota} &\coloneqq \{ j\in J(y^{k,\iota}) \mid y_{i_0}^{k,\iota} = b_{i_0}^{j}\},
		\\
		J_3^{k,\iota} &\coloneqq \{ j\in J(y^{k,\iota}) \mid 
			y_{i_0}^{k,\iota} \in (a_{i_0}^{j}, b_{i_0}^{j})\},
	\end{align*}
	which clearly yields $J(y^{k,\iota}) = J_1^{k,\iota} \cup J_2^{k,\iota} \cup J_3^{k,\iota}$.
	Without loss of generality, these sets can be assumed to be independent of $k,\iota\in\N$,
	i.e., there are $J_1,J_2,J_3\subset J(y^{k,\iota})$ such that $J_1=J_1^{k,\iota}$,
	$J_2=J_2^{k,\iota}$, and $J_3=J_3^{k,\iota}$ hold for all $k,\iota\in\N$. 
	Moreover, $(J_1\setminus J_2) \cup J_3\neq\emptyset$ follows from $L_{i_0}=\{0\}$, 
	as explained above.
	Again, we follow the construction 
	from the proof of \cite[Proposition 4.4]{KaemingMehlitz2025}:
	For each $k\in\N$, we pick $\iota(k)\in\N$ large enough such that
	\eqref{eq:diag_sequence_quality1}, \eqref{eq:diag_sequence_quality3},
	\eqref{eq:diag_sequence_quality4}, and
	\begin{equation}
		\label{eq:diag_sequence_quality5}
		\sgn(\lambda_i^{k,\iota(k)}) = \sgn(\lambda_i^k),
			\qquad \forall i\in\{1,\dotsc,\ell\}
	\end{equation}
	hold, where \eqref{eq:diag_sequence_quality5} can be satisfied as $I_2 = \emptyset$ led
	to this case.
	Let us define 
	$\eta_k \coloneqq \min_{j\in (J_1 \setminus J_2) \cup J_3} 2^{-k}(b_{i_0}^j - y_{i_0}^{k,\iota(k)})$
	for all $k\in\N$,
	where $\eta_k > 0$ is ensured for all $k\in\N$ due to $b_{i_0}^j > y_{i_0}^{k,\iota(k)}$ 
	for all $j\in (J_1 \setminus J_2) \cup J_3$ and $k\in\N$.
	We also note that $\eta_k\downarrow 0$ follows
	from boundedness of $\{y^{k,\iota(k)}\}_{k=1}^\infty$,
	which is a consequence of \eqref{eq:diag_sequence_quality1},
	$x^k\to\bar x$, and $\delta^k\to 0$.
	Define
	\begin{equation}\label{eq:tilde_seq_case_2.2}
		\tilde\lambda^k
		\coloneqq
		\lambda^{k,\iota(k)},
		\quad
		\tilde y_i^k 
		\coloneqq 
		\begin{cases}
			y_i^{k,\iota(k)} + \eta_k & i=i_0 \\
			y_i^{k,\iota(k)} & i \neq i_0,
		\end{cases}
		\quad
		\tilde\delta^k
		\coloneqq
		F(x^k)-\tilde y^{k},
		\qquad
		\forall k\in\N.
	\end{equation}
	With $\delta^k \to 0$, \eqref{eq:diag_sequence_quality1}, and $\eta_k \downarrow 0$, we verify 
	\begin{align*}
		\nnorm{\tilde \delta^k} 
		&\leq
		\nnorm{\delta^k}+\nnorm{F(x^k)-\delta^k- \tilde y^k}	
		\\
		&\leq
		\nnorm{\delta^k}
			+\nnorm{	F(x^k)-\delta^k-y^{k,\iota(k)}}
			+|\eta_k|
		\\
		&\leq 
		\nnorm{\delta^k} + \frac{1}{k} + |\eta_k|
		\to 0.
	\end{align*}
	Moreover, for all $k\in\N$, we obtain
	\begin{subequations}\label{lambda_neq_0_proof}
		\begin{align}
			\label{lambda_neq_0_proof1}
			F_{i_0}(x^k)-\tilde\delta_{i_0}^k &= y_{i_0}^{k,\iota(k)}+\eta_k 
				\in (a_{i_0}^{j}, b_{i_0}^{j}), 
				\qquad\forall j\in (J_1\setminus J_2) \cup J_3,
			\\
			F_{i_0}(x^k)-\tilde\delta_{i_0}^k &= y_{i_0}^{k,\iota(k)}+\eta_k 
				= b_{i_0}^j + \eta_k\notin [a_{i_0}^{j}, b_{i_0}^{j}], 
				\qquad\forall j\in J_2,
			\\
			\label{lambda_neq_0_proof2}
			F_{i}(x^k)-\tilde\delta_{i}^k &= y_{i}^{k,\iota(k)}
				\in [a_{i}^{j}, b_{i}^{j}], 
				\qquad\forall i\neq i_0,\, j\in J(y^{k,\iota(k)}).
		\end{align}
	\end{subequations}
	For all $j\notin J(y^{k,\iota(k)})$, $k\in\N$, we either have $y_{i}^{k,\iota(k)} \notin [a_i^j,b_i^j]$
	for some $i\neq i_0$, in which case $j\notin J(x^k,\tilde\delta^k)$ immediately follows from
	$F_i(x^k)-\tilde\delta_i^k = y_i^{k,\iota(k)}$, 
	or $y_{i_0}^{k,\iota(k)} \notin [a_{i_0}^j,b_{i_0}^j]$, in which 
	case we consider the tail of the sequence for which 
	$y_{i_0}^{k,\iota(k)}+\eta_k \notin [a_{i_0}^{j}, b_{i_0}^{j}]$ holds 
	due to $\eta_k \downarrow 0$, implying $j\notin J(x^k,\tilde\delta^k)$ again via 
	$F_{i_0}(x^k)-\tilde\delta_{i_0}^k = y_{i_0}^{k,\iota(k)}+\eta_k$.
	In either case, we obtain, together with \eqref{lambda_neq_0_proof},
	$J(x^k,\tilde\delta^k) = (J_1 \setminus J_2) \cup J_3 \neq \emptyset$ for all $k\in\N$,
	which especially ensures
	$i_0 \notin I(x^k,\tilde \delta^k)$ for all $k\in\N$ due to \eqref{lambda_neq_0_proof1}.
	For all $k\in\N$, this further yields 
	$J(x^k,\tilde\delta^k) = (J_1 \setminus J_2) \cup J_3 \subset J(y^{k,\iota(k)})$. 
	With this, it follows for all $k\in\N$ that
	\[
		\widehat N_\Gamma (y^{k,\iota(k)})
		=
		\bigcap_{j\in J(y^{k,\iota(k)})} \widehat N_{\Gamma_j}(y^{k,\iota(k)})
		\subset
		\bigcap_{j\in J(x^k,\tilde\delta^k)} \widehat N_{\Gamma_j}(F(x^k)-\tilde\delta^k)
		=
		\widehat N_\Gamma(F(x^k)-\tilde\delta^k)
	\]	
	holds.
	Indeed, the two equalities follow from \cite[Lemma 2.1]{KaemingMehlitz2025},
	while the inclusion can be verified componentwise as the regular limiting normal cone
	enjoys the product rule, see, e.g., \cite[Proposition 6.41]{RockafellarWets1998}.
	For all components $i\neq i_0$, the inclusion is satisfied as 
	$y_i^{k,\iota(k)} = \tilde y^k_i = F_i(x^k)-\tilde\delta_i^k$ and 
	$J(x^k,\tilde\delta^k)\subset J(y^{k,\iota(k)})$.
	In component $i_0$, both intersections yield the set $\{ 0 \}$ due to $i_0\in I_{11}$ and
	\eqref{lambda_neq_0_proof1} together with $J(x^k,\tilde\delta^k) = (J_1 \setminus J_2) \cup J_3$.
	Thus, it follows from $\tilde \lambda^k\in \widehat N_\Gamma (y^{k,\iota(k)})$ that we find 
	$\tilde  \lambda^k \in\widehat N_\Gamma (F(x^k)-\tilde \delta^k)
	\subset N_\Gamma (F(x^k)-\tilde \delta^k)$ 
	for all $k\in\N$.
	Finally, pick any $\hat s^k \in \partial (\sgn(\tilde \lambda_{i_0}^k)\, F_{i_0})(x^k)$ 
	for all $k\in\N$, and define
	\[
		\tilde \varepsilon^k \coloneqq \varepsilon^k 
		+ \mathsmaller\sum\nolimits_{i=1}^\ell (|\tilde \lambda_i^k| - |\lambda_i^k|) s_i^k 	
		+ |\tilde \lambda_{i_0}^k| \hat s^k,
		\qquad
		\forall k\in\N.
	\]
	Due to \eqref{eq:approx_stationarity}, $s_{i_0}^k=0$ for all $k\in\N$ by 
	\eqref{eq:zero_subgradients_I}, and $\sgn(\lambda_i^k)=\sgn(\tilde \lambda_i^k)$ for all
	$i\in\{1,\dotsc,\ell\}$, $k\in\N$ by \eqref{eq:diag_sequence_quality5}, the sequence 
	$\{ (x^k,\tilde\lambda^k,\tilde\delta^k,\tilde\varepsilon^k) \}_{k=1}^\infty 
	\subset\R^{n+\ell+\ell+n}$ fulfills
	\[
		\tilde\varepsilon^k
		-
		\nabla f(x^k) 
		\in 
		\mathsmaller\sum\nolimits_{i=1}^\ell|\tilde \lambda_i^k|\partial(\sgn(\tilde \lambda_i^k)\,F_i)(x^k),
		\qquad
		\forall k\in\N,
	\]
	and $\nnorm{\tilde\varepsilon^k}\to 0$ can be shown as in \eqref{eq:varepsilon_to_0}.
	Thus, recalling \cref{ass:subMFC}, we have shown that 
	$\{ (x^k, \tilde\lambda^k, \tilde \delta^k, \tilde\varepsilon^k)\}_{k=1}^\infty$ 
	is an AM-stationary sequence w.r.t.\ $\bar x$, 
	where $i_0 \notin I(x^k,\tilde\delta^k)$.
	Moreover, $I(x^k,\tilde\delta^k) \subset I(x^k,\delta^k)$ follows from 
	\eqref{eq:diag_sequence_quality3} and \eqref{eq:diag_sequence_quality4} as in the proof
	of \cite[Proposition 4.4]{KaemingMehlitz2025}.
	Hence, we can restart the proof using the sequences
	$\{ (x^k, \lambda^k, \delta^k, \varepsilon^k)\}_{k=1}^\infty 
	\coloneqq
	\{ (x^k, \tilde\lambda^k, \tilde\delta^k, \tilde\varepsilon^k)\}_{k=1}^\infty$,
	$\{ y^{k,\iota} \}_{\iota=1}^\infty \coloneqq\{ F(x^k)-\tilde\delta^k \}_{\iota=1}^\infty$ for all $k\in\N$,
	and $\{ \lambda^{k,\iota} \}_{\iota=1}^\infty \coloneqq\{ \tilde \lambda^k \}_{\iota=1}^\infty$ for all $k\in\N$,
	which fulfill $\lambda^{k,\iota}\in \widehat N_\Gamma(y^{k,\iota})$ for all $k,\iota\in\N$ as shown above,
	and use that the set $I_2$ corresponding to these sequences fulfills $I_2 = \emptyset$.	 
	However, it is not apparent that the set $I_{12}$ corresponding to these sequences
	preserves $I_{12} = \emptyset$, such that, after the restart, we may have to proceed with
	another iteration of Case 2.1. Nevertheless, as the present case fulfills 
	$I(x^k,\tilde\delta^k) \subset I(x^k,\delta^k)$ and removes the element $i_0\in I_{11}$
	from $I(x^k,\tilde\delta^k)$,
	while Case 2.1 empties $I_{12}$ 
	without enlarging any other subset of $I$, alternating between the two cases is a terminating 
	procedure.
	
	\textbf{Case 3:} $I=\emptyset$. By definition of the set $I$, the claim immediately follows.\\
	
	Next, we prove assertion~\ref{item:lambda_neq_0_dir}.
	According to \cref{thm:locmin_dir_akkt_II},
	we find an AM-stationary sequence 
	$\{ (x^k, \lambda^k, \delta^k, \varepsilon^k) \}_{k=1}^\infty \subset \R^{n+\ell+\ell+n}$
	w.r.t.\ $\bar x$ in direction $d$, i.e., we have 
	\eqref{eq:approximate_statonarity} and $x^k\neq \bar x$ for each $k\in\N$, the 
	convergences \eqref{eq:sequences_conv}, and \eqref{eq:approx_stat_dir}. 
	Due to \cref{rem:nonvanishing_seq}, we may also assume that $\delta^k\neq 0$
	and $\lambda^k\neq 0$ hold for all $k\in\N$.
	We now follow the proof of the non-directional version from above subject to the following
	exceptions.
	
	\textbf{Cases 1 and 2.2:} In these cases, we replace \eqref{eq:diag_sequence_quality1} by
	\begin{subequations}\label{eq:diag_sequence_quality1_new}
		\begin{align}
			\label{eq:diag_sequence_quality1_new1}
			\nnorm{y^{k,\iota(k)}-(F(x^k)-\delta^k)}
			&\leq
			\min\{\nnorm{x^k-\bar x},2^{-k}\}
			\nnorm{\delta^k},
			\\
			\label{eq:diag_sequence_quality1_new2}
			\nnorm{\lambda^{k,\iota(k)}-\lambda^k}
			&\leq
			\min\{\nnorm{x^k-\bar x},2^{-k}\}
			\min\{1,\nnorm{\lambda^k}\}, 
		\end{align}
	\end{subequations}
	which can indeed be fulfilled for $\iota(k)\in\N$ large enough due to 
	$\nnorm{\delta^k}>0$, $\min\{1,\nnorm{\lambda^k}\}>0$, and $x^k\neq \bar x$ 
	for all $k\in\N$. 
	For the sequence 
	$\{ (x^k, \tilde\lambda^k, \tilde\delta^k, \tilde\varepsilon^k) \}_{k=1}^\infty\subset\R^{n+\ell+\ell+n}$
	constructed during Case 1 or Case 2.2, we now need to show that it is 
	AM-stationary w.r.t.\ $\bar x$ in direction $d$. Thus, following the proof
	from above and recalling $x^k\neq \bar x$ for all $k\in\N$, it remains to 
	show \eqref{eq:approx_stat_dir} and confirm that \eqref{eq:sequences_conv} 
	is not influenced by demanding \eqref{eq:diag_sequence_quality1_new} instead of
	\eqref{eq:diag_sequence_quality1}. 
	
	Let us start with the sequence in Case 1.
	As $\nnorm{x^k-\bar x} \to 0$, 
	$\nnorm{\delta^k}\to 0$, and $\{ \min\{1,\nnorm{\lambda^k}\} \}_{k=1}^\infty$ bounded, 
	we obtain $\nnorm{x^k-\bar x} \nnorm{\delta^k} \to 0$
	and $\nnorm{x^k-\bar x} \min\{1,\nnorm{\lambda^k}\} \to 0$, such that
	the convergences $\nnorm{\tilde\delta^k}\to 0$ and 
	$\nnorm{\tilde \varepsilon^k}\to 0$ can be obtained as for the non-directional
	version, i.e., \eqref{eq:sequences_conv} holds. 
	Concerning \eqref{eq:sequences_dir_conv}, the first convergence follows from 
	the fact that
	$\{ (x^k,\lambda^k,\delta^k,\varepsilon^k) \}_{k=1}^\infty$ is
	AM-stationary w.r.t.\ $\bar x$ in direction $d$. For the second convergence,
	we use the definition of $\tilde\delta^k$, \eqref{eq:diag_sequence_quality1_new1},
	\eqref{eq:sequences_dir_conv} for 
	$\{ (x^k,\lambda^k,\delta^k,\varepsilon^k) \}_{k=1}^\infty$, and $\delta^k\to 0$, which yield
	\begin{align*}
		\frac{\nnorm{\tilde\delta^k}}{\nnorm{x^k-\bar x}} 
		&= 
		\frac{\nnorm{\delta^k}}{\nnorm{x^k-\bar x}} 
			+ \frac{\nnorm{F(x^k)-\delta^k-y^{k,\iota(k)}}}{\nnorm{x^k-\bar x}}
		\leq 	
		\frac{\nnorm{\delta^k}}{\nnorm{x^k-\bar x}} 
			+ \nnorm{\delta^k}
		\to 
		0,
	\end{align*}
	and, thus, $\tilde\delta^k/\nnorm{x^k-\bar x}\to 0$.
	To show \eqref{eq:perturbation_vs_multiplier}, we first note that \eqref{eq:diag_sequence_quality1_new} yields,
	for all $k\in\N$,
	\[
		\nnorm{\tilde\delta^k}
		\geq
		(1-2^{-k})\nnorm{\delta^k}
		>
		0,
		\qquad
		\nnorm{\tilde\lambda^k}
		\geq
		(1-2^{-k})\nnorm{\lambda^k}
		>
		0,		
	\]
	i.e., $\tilde\delta^k\neq 0$ and $\tilde\lambda^k\neq 0$. Thus, proving \eqref{eq:perturbation_vs_multiplier}
	is equivalent to showing \eqref{eq:residual_va_multiplier_nontrivial}.
	Observe that we can rewrite
	\begin{equation}\label{eq:splitting_limit}
		\lim_{k\to\infty}\left(
			\frac{\tilde\delta^k}{\nnorm{\tilde\delta^k}} - \frac{\tilde\lambda^k}{\nnorm{\tilde\lambda^k}}
		\right)
		=
		\lim_{k\to\infty}\left(
			\frac{\tilde\delta^k}{\nnorm{\tilde\delta^k}} - \frac{\delta^k}{\nnorm{\delta^k}}
			+ \frac{\lambda^k}{\nnorm{\lambda^k}}-\frac{\tilde\lambda^k}{\nnorm{\tilde\lambda^k}}
		\right)
	\end{equation}
	by \eqref{eq:residual_va_multiplier_nontrivial}
	for $\{ (x^k,\lambda^k,\delta^k,\varepsilon^k) \}_{k=1}^\infty$, 
	which follows from \eqref{eq:perturbation_vs_multiplier} 
	for $\{ (x^k,\lambda^k,\delta^k,\varepsilon^k) \}_{k=1}^\infty$ 
	as we have $\delta^k\neq 0$ and $\lambda^k\neq 0$ for all $k\in\N$.
	Then,
	\begin{align*}
		\left\Vert
			\frac{\delta^k}{\nnorm{\delta^k}}-\frac{\tilde\delta^k}{\nnorm{\tilde\delta^k}}
		\right\Vert
		&=
		\frac{\bigl\Vert\nnorm{\tilde\delta^k}\delta^k-\nnorm{\delta^k}\tilde\delta^k\bigr\Vert}
			{\nnorm{\delta^k}\nnorm{\tilde\delta^k}}
		\\
		&=
		\frac{\bigl\Vert\nnorm{\tilde\delta^k}\delta^k-\nnorm{\tilde\delta^k}\tilde\delta^k
		+\nnorm{\tilde\delta^k}\tilde\delta^k-\nnorm{\delta^k}\tilde\delta^k\bigr\Vert}
			{\nnorm{\delta^k}\nnorm{\tilde\delta^k}}
		\\
		&\leq
		\frac{2\nnorm{\tilde\delta^k}\nnorm{\tilde\delta^k-\delta^k}}
		{\nnorm{\delta^k}\nnorm{\tilde\delta^k}}
		=
		\frac{2\nnorm{\tilde\delta^k-\delta^k}}{\nnorm{\delta^k}}
		\\
		&\leq
		\frac{2\nnorm{x^k-\bar x} \nnorm{\delta^k}}{\nnorm{\delta^k}}
		= 
		2\nnorm{x^k-\bar x} \to 0,
	\end{align*}
	where the definition of $\tilde\delta^k$ allowed us to use \eqref{eq:diag_sequence_quality1_new1}
	in the second estimate. Analogously,
	\begin{align*}
		\left\Vert
			\frac{\lambda^k}{\nnorm{\lambda^k}}-\frac{\tilde\lambda^k}{\nnorm{\tilde\lambda^k}}
		\right\Vert
		&\leq
		\frac{2\nnorm{\tilde\lambda^k-\lambda^k}}{\nnorm{\lambda^k}}
		\leq
		\frac{2\nnorm{x^k-\bar x} \nnorm{\lambda^k}}{\nnorm{\lambda^k}}
		= 
		2\nnorm{x^k-\bar x} \to 0
	\end{align*}
	is obtained using the definition of $\tilde\lambda^k$ and \eqref{eq:diag_sequence_quality1_new2}.
	An insertion into \eqref{eq:splitting_limit} yields
	\[
		\frac{\tilde\delta^k}{\nnorm{\tilde\delta^k}} - \frac{\tilde\lambda^k}{\nnorm{\tilde\lambda^k}}
		\to 0,
	\]
	such that we have shown \eqref{eq:perturbation_vs_multiplier}.
	Concerning \eqref{eq:multipliers_not_arbitrarily_unbounded}, we use the definition 
	of $\tilde\delta^k$ and \eqref{eq:diag_sequence_quality1_new} to obtain
	\begin{align*}
		\frac{\nnorm{\tilde\delta^k}\nnorm{\tilde\lambda^k}}{\nnorm{x^k-\bar x}}
		\leq&\,
		\frac{\nnorm{\delta^k}\nnorm{\tilde\lambda^k}}{\nnorm{x^k-\bar x}} 
			+ \frac{\nnorm{F(x^k)-\delta^k-y^{k,\iota(k)}}\nnorm{\tilde\lambda^k}}{\nnorm{x^k-\bar x}}
		\\
		\leq&\,
		\frac{\nnorm{\delta^k}\nnorm{\lambda^k}}{\nnorm{x^k-\bar x}} 
			+ \nnorm{\delta^k}
		\\
			&+ \frac{\nnorm{F(x^k)-\delta^k-y^{k,\iota(k)}}\nnorm{\lambda^k}}{\nnorm{x^k-\bar x}} 
			+ \nnorm{F(x^k)-\delta^k-y^{k,\iota(k)}},
		\qquad
		\forall k\in\N,
	\end{align*}
	and apply \eqref{eq:multipliers_not_arbitrarily_unbounded} for 	
	$\{ (x^k,\lambda^k,\delta^k,\varepsilon^k) \}_{k=1}^\infty$, $\delta^k \to 0$, and
	\eqref{eq:diag_sequence_quality1_new1} to find that the first, second, and 
	fourth summands on the right-hand side are bounded.
	Further, we have
	\[
		\frac{\nnorm{F(x^k)-\delta^k-y^{k,\iota(k)}}\nnorm{\lambda^k}}{\nnorm{x^k-\bar x}} 
		\leq
		\nnorm{\lambda^k} \nnorm{\delta^k}
		= 
		\frac{\nnorm{\delta^k}\nnorm{\lambda^k}}{\nnorm{x^k-\bar x}} \nnorm{x^k-\bar x},
		\qquad
		\forall k\in\N
	\]
	by \eqref{eq:diag_sequence_quality1_new1}, such that, 
	due to $x^k \to \bar x$ and \eqref{eq:multipliers_not_arbitrarily_unbounded} 
	for 	$\{ (x^k,\lambda^k,\delta^k,\varepsilon^k) \}_{k=1}^\infty$, also the remaining summand 
	is bounded. Altogether, \eqref{eq:approx_stat_dir} is fulfilled for sequence 
	$\{ (x^k, \tilde\lambda^k, \tilde\delta^k, \tilde\varepsilon^k) \}_{k=1}^\infty$.
	
	In Case 2.2, we can proceed in similar fashion. 
	Replacing \eqref{eq:diag_sequence_quality1} by
	\eqref{eq:diag_sequence_quality1_new}, we define
	\[
		\eta_k
		\coloneqq
		2^{-k}\min\left\{
			\min\limits_{j\in (J_1\setminus J_2)\cup J_3}\bigl(b^j_{i_0}-y^{k,\iota(k)}_{i_0}\bigr),
			\frac{\nnorm{\delta^k}}{2}
		\right\},
		\qquad
		\forall k\in\N,
	\]
	and note $\eta_k\downarrow 0$ as $2^{-k-1} \nnorm{\delta^k} \to 0$.	
	Afterwards, we use the constructions from \eqref{eq:tilde_seq_case_2.2} to proceed.
	Analogous computations as above reveal $\tilde\delta^k\neq 0$ and $\tilde\lambda^k \neq 0$
	for all $k\in\N$
	and that $\{ (x^k, \tilde\lambda^k, \tilde\delta^k, \tilde\varepsilon^k) \}_{k=1}^\infty$
	is an AM-stationary sequence w.r.t.\ $\bar x$ in direction $d$.
	Furthermore, the above construction preserves the decisive properties in \eqref{lambda_neq_0_proof}.		
	
	\textbf{Case 2.1:} 
	In this case, we define $\{\tilde\lambda^k\}_{k=1}^\infty$ according to
	\[
		\tilde\lambda^k_i
		\coloneqq
		\begin{cases}
			\lambda^k_i	&	i\in\{1,\ldots,\ell\}\setminus I_{12}
			\\
			\frac1k\ell^{-1/2}\min\{1,\nnorm{\lambda^k}\}	&	i\in I_{12}\colon\,L_i\in\{\R_+,\R\}
			\\
			-\frac1k\ell^{-1/2}\min\{1,\nnorm{\lambda^k}\}	&	i\in I_{12}\colon\,L_i=\R_-
		\end{cases}
	\]	
	for all $k\in\N$.
	Note that the above construction preserves the property $\tilde\lambda^k_i\to 0$ for all $i\in I_{12}$,
	so that $\nnorm{\tilde\varepsilon^k}\to 0$ still holds.
	We need to show that the sequence 
	$\{ (x^k, \tilde\lambda^k, \delta^k, \tilde\varepsilon^k) \}_{k=1}^\infty\subset\R^{n+\ell+\ell+n}$
	is AM-stationary w.r.t.\ $\bar x$ in direction $d$,
	which means that it remains to show \eqref{eq:approx_stat_dir}.
	Clearly \eqref{eq:sequences_dir_conv} trivially follows from 
	$\{ (x^k,\lambda^k,\delta^k,\varepsilon^k) \}_{k=1}^\infty$ being
	AM-stationary w.r.t.\ $\bar x$ in direction $d$. 
	Concerning \eqref{eq:perturbation_vs_multiplier}, we first note that
	we have $\tilde\lambda^k\neq 0$ for all $k\in\N$
	due to $I_{12}\neq\emptyset$ and $\min\{1,\nnorm{\lambda^k}\} > 0$. 
	Hence, together with $\delta^k \neq 0$ for all $k\in\N$, proving \eqref{eq:perturbation_vs_multiplier}
	is equivalent to showing \eqref{eq:residual_va_multiplier_nontrivial}.
	For each $k\in\N$, we find
	\begin{equation*}
		\nnorm{\tilde\lambda^k-\lambda^k}
		\leq
		\frac1k\nnorm{\lambda^k},
	\end{equation*}
	which implies
	\[
		\norm{\frac{\lambda^k}{\nnorm{\lambda^k}}-\frac{\tilde\lambda^k}{\nnorm{\tilde\lambda^k}}}
		\leq
		\frac{2\nnorm{\tilde\lambda^k-\lambda^k}}{\nnorm{\lambda^k}}
		\leq
		\frac{2}{k}\to 0.
	\]	
	This yields
	\[
		\norm{\frac{\delta^k}{\nnorm{\delta^k}} - \frac{\tilde\lambda^k}{\nnorm{\tilde\lambda^k}}}
		\leq
		\norm{\frac{\delta^k}{\nnorm{\delta^k}} - \frac{\lambda^k}{\nnorm{\lambda^k}}}
		+
		\norm{\frac{\lambda^k}{\nnorm{\lambda^k}}-\frac{\tilde\lambda^k}{\nnorm{\tilde\lambda^k}}}
		\to 0,
	\]
	where we also used \eqref{eq:perturbation_vs_multiplier} 
	for $\{ (x^k,\lambda^k,\delta^k,\varepsilon^k) \}_{k=1}^\infty$ 
	as $\delta^k\neq 0$ and $\lambda^k\neq 0$ for all $k\in\N$.
	Hence, \eqref{eq:perturbation_vs_multiplier} holds for
	$\{ (x^k, \tilde\lambda^k, \delta^k, \tilde\varepsilon^k) \}_{k=1}^\infty$ as well.
	For \eqref{eq:multipliers_not_arbitrarily_unbounded}, we find that
	the definition of $\tilde\lambda^k$ implies	
	\begin{equation*}
		\nnorm{\tilde\lambda^k} 
		\leq 
		\left(1+\frac1k\right)\nnorm{\lambda^k},
		\qquad
		\forall k\in\N.
	\end{equation*}
	The latter yields
	\[
		\frac{\nnorm{\delta^k}\nnorm{\tilde\lambda^k}}{\nnorm{x^k-\bar x}}
		\leq
		\left(1+\frac1k\right)\frac{\nnorm{\delta^k}\nnorm{\lambda^k}}{\nnorm{x^k-\bar x}},
		\qquad
		\forall k\in\N,
	\]
	which implies that boundedness of the left-hand side is confirmed due to 
	\eqref{eq:multipliers_not_arbitrarily_unbounded} 
	for $\{ (x^k,\lambda^k,\delta^k,\varepsilon^k) \}_{k=1}^\infty$.
\end{proof}

\begin{proof}[Proof of \cref{prop:subMFC_vs_NNAMCQ}]
	To start, let us prove assertion~\ref{item:NNAMCQ_ODPsubMFC}.
	Since $\bar{x}$ is a local minimizer, \cref{lem:approx_MSt_NC,lem:lambda_neq_0} yield
	the existence of a sequence 
	$\{ (x^k, \lambda^k, \delta^k, \varepsilon^k) \}_{k=1}^\infty \subset \R^{n+\ell+\ell+n}$
	that is AM-stationary w.r.t.\ $\bar{x}$ and fulfills 
	$\lambda^k_i \neq 0$ for all $i\in I(x^k,\delta^k)$ and $k\in\N$.
	By definition, this particularly implies
	$\lambda^k \in N_\Gamma(F(x^k)-\delta^k)$ for all $k\in\N$, 
	and, recalling \cref{rem:constant_sets},
	we may take $I\coloneqq I(x^k,\delta^k)$ for any $k\in\N$ to fulfill \ref{subMFCII}
	of ODP-subMFC at $\bar{x}$. If $I=\emptyset$, then \ref{subMFCI}
	of ODP-subMFC at $\bar{x}$ is satisfied as well. Otherwise, we use 
	NNAMCQ at $\bar x$, which implies that
	\[
		0\in \mathsmaller\sum\nolimits_{i=1}^\ell |\lambda_i|\partial(\sgn(\lambda_i)\,F_i)(\bar x),
		\lambda \in N_\Gamma (F(\bar x)) \quad\implies\quad \lambda = 0
	\]
	holds true, where we applied \cref{ass:subMFC}.
	Let us show that \ref{subMFCI} of ODP-subMFC at $\bar{x}$ is satisfied. 
	To this end, pick some
	$u\in\R^\ell$ with $u\geq 0$ and $u_{\{1,\dotsc,\ell\} \setminus I}=0$
	such that
	\begin{equation}\label{eq:NNAMCQ_ODPsubMFC_1}
		0 \displaystyle 
		\in 
		\mathsmaller\sum\nolimits_{i\in I} u_i \partial\bigl(\sgn(\lambda^k_i)\, F_i\bigr)(\bar x).
	\end{equation}
	We are going to prove that this necessarily implies $u=0$.
	Recall \cref{rem:constant_sets} and define the vector $v\in\R^\ell$ by 
	$v_i \coloneqq \sgn(\lambda_i^k) u_i$ for all 
	$i\in\{1,\dotsc,\ell\}$ using any $k\in\N$, as well as the set $I_\pm \coloneqq \{ i\in I \mid v_i\neq 0 \}$.
	Due to $\lambda_i^k\neq 0$ for all $i\in I$ and $k\in\N$, this particularly yields 
	$I_\pm = \{ i\in I \mid u_i\neq 0 \}$ as well as
	$\sgn(v_i) = \sgn(\lambda_i^k)$ and $|v_i| = u_i$ for all $i\in I_\pm$.
	By definition of $I_\pm$, we have $v_i=0$ for $i\in I\setminus I_\pm$.
	Additionally, $v_i=0$ also holds for	all $i\notin I$ due to $u_i=0$ for all $i\notin I$.
	Altogether, we obtain $v_i= 0$ for all $i\notin I_\pm$.
	Using the above identities, \eqref{eq:NNAMCQ_ODPsubMFC_1} is equivalent to
	\begin{equation}\label{eq:NNAMCQ_ODPsubMFC_3}
		\begin{aligned}
		0 \displaystyle 
		\in 
		\mathsmaller\sum\nolimits_{i\in I_\pm} u_i \partial\bigl(\sgn(\lambda^k_i)\, F_i\bigr)(\bar x) 
		&=
		\mathsmaller\sum\nolimits_{i\in I_\pm} |v_i| \partial\bigl(\sgn(v_i)\, F_i\bigr)(\bar x)
		\\
		&=
		\mathsmaller\sum\nolimits_{i=1}^\ell |v_i| \partial\bigl(\sgn(v_i)\, F_i\bigr)(\bar x).	
		\end{aligned}
	\end{equation}
	Due to the orthodisjunctive structure of $\Gamma$, it follows from 
	\cite[Corollary 1]{AdamCervinkaPistek2016} that, for all $k\in\N$,
	there exists a set $S^k\subset \Gamma$ such that
	\[
		N_\Gamma(F(x^k)-\delta^k) = \bigcup_{y\in S^k} \widehat{N}_\Gamma(y).
	\]
	The orthodisjunctive structure in \eqref{eq:orthodisjunctive_problem} ensures that,
 	for all $y \in S^k$, $k\in \N$, we have the representation
	\begin{equation*}
		\widehat{N}_{\Gamma}(y) 
		= 
		\prod_{i=1}^\ell L_i^y
	\end{equation*}
	using some $L_i^y \in \{\{ 0 \}, \R_+, \R_-, \R\}$, 
	$i\in\{1,\dotsc,\ell\}$,
	see, e.g., \cite[Remark~4.1\,(a)]{KaemingMehlitz2025} or the proof of \cref{lem:lambda_neq_0} above.
	Thus, for all $k\in\N$,
	\[
		\lambda^k \in N_\Gamma(F(x^k)-\delta^k) = \bigcup_{y\in S^k} \prod_{i=1}^\ell L_i^y
	\]
	implies $v\in N_\Gamma(F(x^k)-\delta^k)$ due to $\sgn(v_i)=\sgn(\lambda_i^k)$ 
	for all $i\in I_\pm$ and 
	$v_i=0$ for all $i\notin I_\pm$.
	By the robustness of the limiting normal cone, this yields $v\in {N}_\Gamma(F(\bar{x}))$.
	Together with \eqref{eq:NNAMCQ_ODPsubMFC_3}, 
	NNAMCQ at $\bar x$ implies $v=0$.
	Due to $|v_i|=u_i$ for all $i\in I_\pm$, $u_i = 0$ for $i\in I\setminus I_\pm$ by
	definition of $I_\pm$, and $u_i=0$ for all $i\in \{1,\ldots,\ell\}\setminus I$,
	we obtain $u=0$,
	which proves that~\ref{subMFCI} of ODP-subMFC holds at $\bar x$.
	Hence, ODP-subMFC is valid at $\bar x$.
	
	Let us proceed with the proof of assertion~\ref{item:FOSCMS_dirODPsubMFC}.
	Due to $d\in \widehat C(\bar x) \cap \mathbb{S}$ and $\nabla f(\bar x)\neq 0$,
	we can use \cref{lem:lambda_neq_0} to guarantee the 
	existence of an AM-stationary sequence 
	$\{ (x^k,\lambda^k,\delta^k,\varepsilon^k) \}_{k=1}^\infty \subset\R^{n+\ell+\ell+n}$
	w.r.t.\ $\bar x$ in direction $d$ with $\lambda^k_i\neq 0$ for all $i\in I(x^k,\delta^k)$ and $k\in\N$. 
	We may now follow the proof of assertion~\ref{item:NNAMCQ_ODPsubMFC} subject to the following
	exceptions. If $I\coloneqq I(x^k,\delta^k)$ for any $k\in\N$ fulfills 
	$I\neq\emptyset$, we now need to use FOSCMS$(d)$ at $\bar x$, which reads as 
	\[
		0 \in \mathsmaller\sum\nolimits_{i=1}^\ell|\lambda_i|\partial(\sgn(\lambda_i)\,F_i)(\bar x;d),
		\lambda \in N_\Gamma(F(\bar x); F'(\bar{x};d))\quad \implies\quad \lambda=0
	\]
	after applying \cref{ass:subMFC}. With FOSCMS$(d)$ we then need to show that
	\[
		0 \displaystyle 
		\in 
		\mathsmaller\sum\nolimits_{i=1}^{\ell} |v_i| \partial\bigl(\sgn(v_i)\, F_i\bigr)(\bar x;d)
		\quad \implies\quad v=0
	\]
	holds for all $v\in\R^\ell$ that can, for any $u\in\R^\ell$ 
	with $u \geq 0$ and $u_{\{1,\dotsc,\ell\}\setminus I} = 0$, be defined via 
	$v_i\coloneqq \sgn(\lambda_i^k)u_i$ for all $i\in\{1,\dotsc,\ell\}$ using some $k\in \N$. 
	To this end, analogously to the proof of assertion~\ref{item:NNAMCQ_ODPsubMFC},
	we realize that $\lambda^k\in N_\Gamma(F(x^k)-\delta^k)$ implies 
	$v\in N_\Gamma(F(x^k)-\delta^k)$ for all $k\in\N$.
	Using \eqref{eq:sequences_conv}, \eqref{eq:sequences_dir_conv}, \eqref{eq:suitable_repr_of_constraint_viol}, and
	the robustness of the directional limiting normal cone then
	yields $v \in N_\Gamma(F(\bar{x});F'(\bar{x};d))$. 
	Thus, FOSCMS$(d)$ at $\bar{x}$ implies ODP-subMFC$(d)$ at $\bar{x}$. 
\end{proof}

\end{document}